\newcommand{\gendis}{\kappa}
\newcommand{\beq}{\begin{eqnarray}}
\newcommand{\eeq}{\end{eqnarray}}
\newcommand{\dsp}{\displaystyle}
\newcommand{\bd}{\begin{displaymath}}
\newcommand{\be}{\begin{equation}}
\newcommand{\ba}{\begin{array}}
\newcommand{\ed}{\end{displaymath}}
\newcommand{\ee}{\end{equation}}
\newcommand{\ea}{\end{array}}
\newcommand{\espace}{\mbox{ }}
\newcommand{\Prob}{{\rm I\hspace{-0.8mm}P}}
\newcommand{\Exp}{{\rm I\hspace{-0.8mm}E}}
\newcommand{\indicator}[1]{{\mbox{\large\bf$1$}}_{#1}}
\newcommand{\sgn}{\mbox{\rm sgn}\,}
\def\N{\mathbb{N}}
\def\Z{\mathbb{Z}}
\def\R{\mathbb{R}}
\newcommand{\eqref}[1]{(\ref{#1})}
\newtheorem{theorem}{Theorem}[section]
\newtheorem{assumption}{Assumption}[section]
\newtheorem{proposition}{Proposition}[section]
\newtheorem{definition}{Definition}[section]
\newtheorem{lemma}{Lemma}[section]
\newtheorem{corollary}{Corollary}[section]
\newtheorem{remark}{Remark}[section]
\newtheorem{example}{Example}[section]
\newenvironment{proof}[2]{\espace\\{\em Proof of #1 \ref{#2}.}}{\hfill\mbox{$\square$}}
\begin{document}
\title{Hydrodynamics in a condensation regime: the disordered asymmetric zero-range process}
\author{C. Bahadoran$^{a,e}$, T. Mountford$^{b,e}$, K. Ravishankar$^{c,e}$, E. Saada$^{d,e}$}
\date{}
\maketitle
$$ \ba{l}
^a\,\mbox{\small Laboratoire de Math\'ematiques Blaise Pascal, Universit\'e Clermont Auvergne,} \\
\quad \mbox{\small 63177 Aubi\`ere, France. E-mail:
Christophe.Bahadoran@uca.fr}\\
^b\, \mbox{\small Institut de Math\'ematiques, \'Ecole Polytechnique F\'ed\'erale, 
} \\
\quad \mbox{\small Lausanne, Switzerland. E-mail:
thomas.mountford@epfl.ch}\\
^c\, \mbox{\small NYU-ECNU
Institute of Mathematical Sciences at NYU Shanghai,}\\
\quad\mbox{\small 3663 Zhongshan Road North, Shanghai, 200062, China. E-mail:
 kr26@nyu.edu}\\
%
^d\, \mbox{\small 
CNRS, UMR 8145, MAP5, Universit\'e Paris Descartes, 45 rue des Saints-P\`eres,}\\
\quad\mbox{\small 
75270 Paris cedex 06, France. E-mail:
Ellen.Saada@mi.parisdescartes.fr}\\
^e\, \mbox{\small Centre Emile Borel, Institut Henri Poincar\'e, 75005 Paris, France.} \\ \\
\ea
$$
\begin{abstract}
We study asymmetric zero-range processes on $\Z$ with nearest-neighbour 
jumps and site disorder. The jump rate of particles is an arbitrary but bounded 
nondecreasing function of the number of particles. For any given environment satisfying
suitable averaging properties, we establish a 
hydrodynamic limit given by a scalar conservation law {\em including} 
the domain above critical density, where the flux is shown to be constant. 
\end{abstract}
{\it MSC 2010 subject classification}: 60K35, 82C22.\\ \\
{\it Keywords and phrases}: Asymmetric zero-range process, site disorder, phase transition,
condensation, hydrodynamic limit.
\section{Introduction}
The asymmetric zero-range process (AZRP) with site disorder was introduced in \cite{ev}
 (in connection with condensation phenomena),  
and has since then attracted strong interest due to its phase transition first described
 in \cite{fk}. 
This phase transition is one of the reasons why the hydrodynamic behaviour of this process is still
a partially open problem. 
 This  question is addressed in this paper, 
and we believe this is the first rigorous result in this direction embedded in a general framework.\\ \\ 
The  AZRP with site disorder
is defined by a nondecreasing jump rate  function $g:\N\to\N$, a function 
 $\alpha :\Z^d\to\R_+$ (called the environment or disorder),
and a jump distribution $p(.)$ on $\Z^d$, for $d\ge 1$.   
A particle leaves site $x$ at rate $\alpha(x)g[\eta(x)]$, where $\eta(x)$ denotes the 
current number of particles at $x$, and moves to $x+z$, where $z$ is chosen at random 
with distribution $p(.)$. This  model has product invariant measures; 
it exhibits a critical density  $\rho_c$  if the function $g$ is bounded,
and if  $g$ and $\alpha$ satisfy some averaging properties
plus a proper tail assumption. 
 In the models we consider 
in this paper (see next section for greater precision), 
  $d=1$,  there will be  a unique equilibrium
for each density up to this critical value $\rho_c$  
and no equilibria of strictly higher density.
For instance in the case  $g(.) \equiv \ 1 $ and $p(.)$ concentrated on the value $1$,
we obtain $ M/M/1 $ queues in tandem, for which
 \cite{afgl} showed that there were no invariant measures of supercritical density. \\ \\
A signature of phase transition arises in the hydrodynamic limit. For asymmetric 
conservative systems with local interactions,
one usually expects (see e.g. \cite{rez,kl}) a hydrodynamic limit given by entropy solutions 
of a scalar conservation law
\be\label{burgers_intro}
\partial_t\rho(t,x)+\partial_x[f(\rho(t,x))]=0
\ee
where $\rho(t,x)$ is the local particle density field, and $f(\rho)$ is 
the flux-density relation determined by the microscopic dynamics. 
 For the site-disordered AZRP  in any space dimension, the hydrodynamic limit was studied 
in \cite{bfl} but only in the case 
 where phase transition does not occur  (that is $\rho_c=+\infty$).
It was shown to be still given by \eqref{burgers_intro}, with an effective flux function 
depending on the disorder distribution.
 The hydrodynamic limit including phase transition 
was studied by \cite{ks}  for  $M/M/1$ queues in tandem. 
It was shown there that one still has \eqref{burgers_intro}, but 
phase transition is indicated by a plateau on the flux function $f$ for $\rho\in[\rho_c,+\infty)$, 
where this function takes a constant value 
 $c>0$, which is the infimum of the support of the 
distribution of $\alpha(0)$, that is the slowest service rate, achieved only 
asymptotically by faraway servers in both directions (see
Subsection \ref{subsec:efflux} for  a precise statement).  Both \cite{bfl, ks} are quenched results established for {\em almost} every realization of a random i.i.d. site disorder.
A similar  flat region  was predicted 
(albeit not established to this day) in \cite{hs} 
for non-monotone spatially homogeneous zero-range processes.
Both  \cite{hs} and our model 
correspond to condensation regimes, though of different natures 
(localized in our case but uniformly distributed in the case of \cite{hs}).
As pointed out in \cite{hs}, the hydrodynamic limit in such a regime falls 
outside the scope of standard local-equilibrium based approaches.  
On the mathematical side,  related  references on condensation 
in asymmetric zero-range processes include \cite{cg,lan3}. \\ \\
In this paper,
we 
extend the result of  \cite{ks}  to a large class of  site-disordered
AZRP, namely,
with nearest-neighbour  
jumps, not necessarily totally asymmetric, and general 
jump rate function $g$. 
Moreover, we go beyond the case of an ergodic disorder by giving
optimal conditions on a {\it given} environment for the 
hydrodynamic limit, and show that the location of the transition can be influenced
by zero-density defects, invisible on the limiting empirical distribution of the environment. 
To achieve our results,   we show that the missing \footnote{supercritical}
equilibria can be replaced by weaker {\em pseudo-equilibria}, 
and we introduce an {\em interface process}
that gives a new point of view of the microscopic density profile.
 We point out that the scaling limit of the interface process, 
 which comes in parallel to the hydrodynamic limit, contains more information than the latter, 
in particular, the motion of microscopic characteristics.  
However, we leave a precise
description of this to a future paper, where it will be investigated in full generality. 
 \\ \\
 Also partly conveyed by the interface process is the {\em local equilibrium} property, 
that is the natural question following the derivation of the hydrodynamic limit. 
This property is studied in depth in the companion paper \cite{bmrs4}.
Note that the situation is more delicate than usual in that the ``freezing''
of supercritical areas in the hydrodynamic scaling does not have local implications.
In fact locally we see (in various forms) the convergence to the 
upper equilibrium measure,  which has lower density.  
 \\ \\
The paper is organized as follows. In Section \ref{sec_results}, we introduce the model 
and notation, and state our  hydrodynamic result. We comment and illustrate the latter
in Section \ref{sec:Gunter}. 
In Section \ref{sec_proof_hydro}, we prove it.  Finally, some technical results are proved in Appendices \ref{app:justify} and \ref{app:uniform}.  
\section{Notation and results}
\label{sec_results}
In the sequel,  $\R$ denotes the set of real numbers, 
$\Z$  the set of signed integers, $\N=\{0,1,\ldots\}$ 
the set of nonnegative integers, 
 and $\overline{\N}:=\N\cup\{+\infty\}$. 
For $x\in\R$,  $\lfloor x\rfloor$  
denotes the integer part of $x$, that is largest integer $n\in\Z$ 
such that $n\leq x$, and $\delta_x$ denotes the Dirac measure at $x$. 
If $f$ is a real-valued function defined on an interval $I$ of $\R$, and $x\in I$, we denote by
$$f(x+):=\lim_{y\to x,\,y>x}f(y),\quad\mbox{resp.}\quad
f(x-):=\lim_{y\to x,\,y<x}f(y)$$
the right (resp. left) limit of $f$ at $x$,  whenever this makes sense given the position of $x$ in $I$. 
The notation $X\sim\mu$ means that a random 
variable $X$ has probability distribution $\mu$.\\ \\
Let   $\overline{\mathbf{X}}:=\overline{\N}^{\Z}$   denote the set of particle configurations, 
and ${\mathbf{X}}:=\N^\Z$ the subset of particle configurations with finitely many particles at each site.
 A configuration in $\overline{\mathbf{X}}$ is of the form  $\eta=(\eta(x):\,x\in\Z)$ 
 where $\eta(x)\in\overline{\N}$ for each $x\in\Z$. 
 The set  $\overline{\mathbf{X}}$  is equipped with the  coordinatewise  order:  for  
 $\eta,\xi\in\overline{\mathbf{X}}$,  we write 
$\eta\leq\xi$ if and only if $\eta(x)\leq\xi(x)$ for every $x\in\Z$;  in the latter 
inequality, $\leq$ stands for extension to $\overline{\N}$ of
the natural order on $\N$, defined by $n\leq+\infty$ for every $n\in\N$, and $+\infty\leq+\infty$. 
  This order is extended to probability measures on $\overline{\mathbf{X}}$: For two 
 probability measures $\mu,\nu$, we write $\mu\le\nu$ if and only if
$\int fd\mu\le\int fd\nu$ for any nondecreasing function $f$ on $\overline{\mathbf{X}}$.  
\subsection{The process and its invariant measures}\label{subsec:procinv}
Let  $p(.)$ be a probability measure on $\Z$ supported on $\{-1,1\}$.
 We set $p:=p(1)$, $q=p(-1)=1-p$, and assume $p\in(1/2,1]$,
so that the mean drift of the associated random walk is $p-q>0$.\\ \\
Let $g:\N\to[0,+\infty)$ be a nondecreasing function such that
\be\label{properties_g}
g(0)=0<g(1)\leq \lim_{n\to+\infty}g(n)=:g_\infty<+\infty
\ee
We extend $g$ to  $\overline{\N}$  by setting $g(+\infty)=g_\infty$.
Without loss of generality, we henceforth assume 
$g(+\infty)=g_\infty=1$. \\ \\
Let 
$\alpha=(\alpha(x),\,x\in\Z)$ (called the environment or disorder) be a 
 $[0,1]$-valued sequence. 
The set of environments is denoted by
\be\label{set_env}
{\bf A}:=[0,1]^{\Z}
\ee
We consider the 
Markov process  $(\eta_t^\alpha)_{t\geq 0}$
on  $\overline{\mathbf{X}}$  with generator given for any cylinder function  
$f:\overline{\mathbf{X}}\to\R$  by
\be\label{generator}
L^\alpha f(\eta)  =  \sum_{x,y\in\Z}\alpha(x)
p(y-x)g(\eta(x))\left[
f\left(\eta^{x,y}\right)-f(\eta)
\right]\ee
where, if  $\eta(x)>0$, $\eta^{x,y}$ 
denotes the new configuration obtained from $\eta$ after a particle has 
jumped from $x$ to $y$.
%
 In cases of infinite particle number,  the following interpretations hold:
if $\eta(x)<\eta(y)=+\infty$, $\eta^{x,y}$ denotes the new configuration obtained from $\eta$ after a particle has 
been removed from $x$;
 if $\eta(x)=+\infty>\eta(y)$, $\eta^{x,y}$ denotes the new configuration obtained from $\eta$ after a particle has 
been added at $y$.\\ \\
%
This process has the property 
that if $\eta_0\in{\mathbf{X}}$, then almost surely, one has $\eta_t\in{\mathbf{X}}$ 
for every $t>0$. In this case, it may be considered as a Markov process on $\mathbf{X}$ with generator
\eqref{generator} restricted to functions $f:{\mathbf{X}}\to\R$. \\
 When the environment $\alpha(.)$ 
is identically equal to $1$, we recover
the {\em homogeneous} zero-range process (see \cite{and} for its detailed analysis). \\ \\ 
For the existence and uniqueness of  $(\eta_t^\alpha)_{t\geq 0}$  
see \cite[Appendix B]{bmrs2}.   
 Recall from \cite{and} that, since $g$ is nondecreasing, 
 $(\eta_t^\alpha)_{t\geq 0}$  
is attractive, i.e.
its semigroup maps nondecreasing functions (with respect to 
the partial order on $\overline{\mathbf{X}}$) onto nondecreasing functions.
One way to see this is to construct a monotone coupling of  two copies of the process, 
see Subsection \ref{subsec:Harris} below. \\ \\
We set
$$
g(n)!:=\prod_{k=1}^{n} g(k)
$$
for $n\in\N\setminus\{0\}$,
and $g(0)!:=1$. 
For ${\beta}<1$,  
we define the probability measure $\theta_{{\beta}}$ on $\N$
 by \label{properties_a}
\begin{equation}\label{eq:theta-lambda}
\theta_{{\beta}}(n):=Z({\beta})^{-1}\frac{\beta^n}{g(n)!},\quad n\in\N,
\qquad\mbox{where}\quad Z(\beta):=\sum_{\ell=0}^{+\infty}\frac{\beta^\ell}{g(\ell)!}
\end{equation}
We denote by  $\mu_\beta^\alpha$  the invariant measure 
of $L^\alpha$ defined (see e.g. \cite{bfl}) as the product measure 
with marginal $\theta_{\beta/\alpha(x)}$ at site $x$: 
\be\label{def_mu_lambda_alpha}
\mu^\alpha_{\beta}(d\eta):=\bigotimes_{x\in\Z}\theta_{\beta/\alpha(x)}[d\eta(x)]
\ee
 Let
\be\label{inf_alpha}
c:=\inf_{x\in\Z}\alpha(x)
%
\ee
 The measure \eqref{def_mu_lambda_alpha} can be defined on $\overline{\mathbf{X}}$  for
\be\label{cond_simple_bar}
{\beta}\in[0,c]
\ee
by using the conventions
\be\label{extension_theta}\theta_1:=\delta_{+\infty}
\ee
\be
\label{convention_2}
\frac{\beta}{a}=0\mbox{ if }\beta=0\mbox{ and }a\geq 0
\ee
The measure \eqref{def_mu_lambda_alpha} is always supported on $\bf X$ if
\be\label{cond_lambda_simple}
\beta\in(0,c)\cup\{0\}
\ee
When  $\beta=c>0$, conventions  \eqref{extension_theta}--\eqref{convention_2} 
yield  a measure supported on configurations with infinitely many particles 
at all sites $x\in\Z$ that achieve the infimum in \eqref{inf_alpha}, and finitely many particles at other sites.
In particular, this measure is supported on $\bf X$  when the infimum in \eqref{inf_alpha} is not achieved. When $c=0$, the measure \eqref{def_mu_lambda_alpha} is supported on the empty configuration.
Since 
$(\theta_{{\beta}})_{{\beta}\in [0,1)}$ is an exponential family, we have that,  for $\beta\in[0,c]$, 
 \be\label{stoch_inc}
 \mu^\alpha_{{\beta}}\,
 \mbox{is weakly continuous and stochastically increasing in } \beta
 \ee
and that the mean value  of $\theta_{{\beta}}$, 
given  for $\beta\in[0,1)$  by 
\be\label{mean_density}
R({\beta}):=\sum_{n=0}^{+\infty}n\theta_{{\beta}}(n)
\ee
is  an analytic function on $[0,1)$, increasing from $0$ to $+\infty$,  extended (cf. \eqref{extension_theta}) by setting
$R(1)=+\infty$.  
The mean particle density at $x$ under $\mu_{{\beta}}^\alpha$ is defined  for $\beta\in[0,c]$  by
\be\label{mean_density_quenched}
R^\alpha(x,{\beta}):=\Exp_{\mu^\alpha_{\beta}}[\eta(x)]=R\left[\frac{{\beta}}{\alpha(x)}\right]
\ee
\subsection{ The effective flux}
\label{subsec:efflux}
From now on, we will assume that $\alpha$ satisfies the following  assumption. 
\begin{assumption}\label{assumption_ergo}
There exists a probability measure $Q_0$
on $\bf A$ such that
\be\label{eq:assumption_ergo}
Q_0=\lim_{n\to+\infty}\frac{1}{n+1}\sum_{x=-n}^0 \delta_{\alpha(x)}
=\lim_{n\to+\infty}\frac{1}{n+1}\sum_{x=0}^n \delta_{\alpha(x)}
\ee
\end{assumption}
If follows from Assumption \ref{assumption_ergo} that 
\be\label{support_Q}
C:=\inf{\rm supp}\,Q_0\geq\inf_{x\in\Z}\alpha(x)=c
\ee
Assumption \ref{assumption_ergo} is satisfied for instance in the case of an ergodic random environment:
\begin{example}\label{example_random}
Let $Q$ be a spatially ergodic probability measure on $\bf A$ with marginal $Q_0$
(for instance, $Q=Q_0^{\otimes\Z}$). Then, $Q$-almost every $\alpha\in\bf A$ satisfies
Assumption \ref{assumption_ergo} and equality in \eqref{support_Q}. 
\end{example}
Example \ref{example_random} is special because it yields equality in \eqref{support_Q}. In 
Subsection \ref{subsec:flux} we give examples of deterministic environments satisfying 
Assumption \ref{assumption_ergo} for which \eqref{support_Q} is a strict inequality.\\ \\ 
For $\beta\in (0,c)\cup\{0\}$, using conventions 
 \eqref{extension_theta}--\eqref{convention_2}, we can define the
following quantity, which can be interpreted (see Lemma \ref{lemma_inter} below) as the average mean density 
under $\mu^\alpha_\beta$:
\be
\overline{R}^{\alpha}({\beta}):=
\lim_{n\to+\infty}\frac{1}{n+1}\sum_{x=-n}^0 R\left[\frac{{\beta}}{\alpha(x)}\right]
=\lim_{n\to+\infty}\frac{1}{n+1}\sum_{x=0}^n R\left[\frac{{\beta}}{\alpha(x)}\right]\label{average_afgl}
\ee
Indeed,
applying \eqref{eq:assumption_ergo} to the bounded continuous function 
$a\mapsto R[\beta/a]$, we obtain existence and equality of the above limits,
 and the equality 
\be\label{averages}
\overline{R}^\alpha(\beta)=\overline{R}^{Q_0}(\beta)
,\quad\forall \beta\in[0,c)
\ee
where 
\be
\overline{R}^{Q_0}({\beta}) 
:=  \int_{[0,1]}R\left[\frac{\beta}{a}\right]dQ_0[a]
=\int_{[C,1]}R\left[\frac{\beta}{a}\right]dQ_0[a]\in[0,+\infty],
\quad\forall  \beta\in[0,C]
\label{average_afgl_ergodic}
\ee
is also defined using conventions  \eqref{extension_theta}--\eqref{convention_2}. 
The function defined by \eqref{average_afgl_ergodic} is finite 
for $\beta\in(0,C)\cup\{0\}$, because the integrand in 
\eqref{average_afgl_ergodic} is bounded. If $0<\beta={C}$, 
the integral in \eqref{average_afgl_ergodic} may diverge at $C$. 
 Consequently, $\overline{R}^\alpha$ is finite for $\beta\in(0,c)\cup\{0\}$
and if $\beta=c<C$, but may be infinite if $\beta=c=C$. 
The function $\overline{R}^{Q_0}$ is increasing and continuous on the interval $[0,C]$
(see Lemma \ref{lemma_properties_flux} below), and so is 
$\overline{R}^\alpha$ on $(0,c)\cup\{0\}$ by \eqref{averages}. 
We may thus define inverses of $\overline{R}^{Q_0}$
and $\overline{R}^\alpha$ on their respective images.\\ \\
We define the critical density by 
\be\label{def_critical}
\rho_c^\alpha:=\sup\{\overline{R}^\alpha(\beta),\,\beta\in(0,c)\cup\{0\}\}\in[0,+\infty]
\ee
Thus $\rho_c^\alpha=0$ if $c=0$, whereas if $c>0$, we also have
\be\label{other_def_critical}
\rho_c^\alpha:=\overline{R}^\alpha(c-)
\ee
By \eqref{averages} and monotone convergence in \eqref{average_afgl_ergodic}, we  have
\be\label{also_critical}
\rho_c^\alpha=\int_{[0,1]}{R}\left[\frac{c}{a}\right]dQ_0(a)=
\int_{[C,1]}{R}\left[\frac{c}{a}\right]dQ_0(a)=
\int_{[c,1]}{R}\left[\frac{c}{a}\right]dQ_0(a)
\ee
where the  last  equality follows from \eqref{support_Q}.
\begin{remark}\label{remark_point_out_0}
While $\rho_c^\alpha$ is equal to the value obtained by letting $\beta=c$ 
in \eqref{average_afgl_ergodic}, it may not be obtained by letting $\beta=c$ 
in \eqref{average_afgl}. Indeed,
 the latter procedure may produce two different limits in \eqref{average_afgl},
  or a common limit different from the quantity defined by \eqref{def_critical} 
  (see example in Remark \ref{remark_point_out_00}).
\end{remark}
 For the reason explained in Remark \ref{remark_point_out_0}, it is relevant for us to 
define the value $\overline{R}^\alpha(c)$ when $c>0$ by the continuity extension
\be\label{bysetting}
\overline{R}^\alpha(c):=\overline{R}^\alpha(c-)=\rho_c^\alpha\in[0,+\infty]
\ee
and {\em not} by extending definition \eqref{average_afgl} to $\beta=c$. 
With \eqref{bysetting}, $\overline{R}^\alpha$ becomes an increasing continuous function from
$[0,c]$ to $[0,\rho_c^\alpha]$, and we may define its inverse from $[0,\rho_c]$ to $[0,c]$.
Then, for  $\beta\in(0,c)\cup\{0\}$,  
we may reindex the invariant measure $\mu^\alpha_{\beta}$ 
by the mean density   $\rho\in(0,\rho_c^\alpha)\cup\{0\}$,   by   
setting 
\be\label{invariant_density}
\mu^{\alpha,\rho}:=\mu^\alpha_{\left(\overline{R}^{\alpha}\right)^{-1}(\rho)}
\ee
 We now justify as announced the intepretation of \eqref{average_afgl} as the mean density.
\begin{lemma}\label{lemma_inter}
Let $\beta\in(0,c)\cup\{0\}$, and $\rho=\overline{R}^\alpha(\beta)\in(0,\rho_c^{\alpha})\cup\{0\}$.
Let $\eta^{\alpha,\rho}=\eta^\alpha_\beta$
be a random configuration in $\bf X$ with distribution $\mu^{\alpha,\rho}=\mu^\alpha_\beta$. 
Then the following limits hold in probability:
\be\label{justify_mean_beta}
\lim_{n\to+\infty}\frac{1}{n+1}\sum_{x=0}^n\eta^{\alpha}_\beta(x)=
\lim_{n\to+\infty}\frac{1}{n+1}\sum_{x=0}^n\eta^{\alpha}_\beta(-x)=\overline{R}^\alpha(\beta),
\ee
that is, 
\be\label{justify_mean}
\lim_{n\to+\infty}\frac{1}{n+1}\sum_{x=0}^n\eta^{\alpha,\rho}(x)=
\lim_{n\to+\infty}\frac{1}{n+1}\sum_{x=0}^n\eta^{\alpha,\rho}(-x)=\rho
\ee
\end{lemma}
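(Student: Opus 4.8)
The plan is to read \eqref{justify_mean_beta} as a weak law of large numbers for the independent (but not identically distributed) coordinates $\left(\eta^\alpha_\beta(x)\right)_{x\in\Z}$ under the product measure $\mu^\alpha_\beta$; since $\rho=\overline{R}^\alpha(\beta)$ and $\mu^{\alpha,\rho}=\mu^\alpha_\beta$ by \eqref{invariant_density}, the reformulation \eqref{justify_mean} is literally the same statement. The case $\beta=0$ is immediate: by convention \eqref{convention_2} every marginal is $\theta_0=\delta_0$, so each partial sum vanishes identically and equals $\overline{R}^\alpha(0)=0$. I therefore fix $\beta\in(0,c)$ and treat only the forward average $S_n:=(n+1)^{-1}\sum_{x=0}^n\eta^\alpha_\beta(x)$; the backward average is handled identically after replacing $x$ by $-x$, Assumption \ref{assumption_ergo} and \eqref{average_afgl} furnishing the same limit $\overline{R}^\alpha(\beta)$ in both directions.

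First I would record that under $\mu^\alpha_\beta$ the coordinate $\eta(x)$ has law $\theta_{\beta/\alpha(x)}$, hence expectation $R[\beta/\alpha(x)]$ and variance $\sigma^2(\beta/\alpha(x))$, where $\sigma^2(\gamma):=\sum_{n}n^2\theta_\gamma(n)-R(\gamma)^2$. By the very definition \eqref{average_afgl} of $\overline{R}^\alpha$, the expectations of the partial sums converge, i.e.
$$\Exp_{\mu^\alpha_\beta}[S_n]=\frac{1}{n+1}\sum_{x=0}^n R\left[\frac{\beta}{\alpha(x)}\right]\longrightarrow\overline{R}^\alpha(\beta)\qquad(n\to\infty).$$
It thus remains to prove that $S_n-\Exp_{\mu^\alpha_\beta}[S_n]\to 0$ in probability, which I would obtain from a second-moment (Chebyshev) estimate.

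The key quantitative input is a uniform bound on the variances. Since $\beta<c=\inf_{x}\alpha(x)$, we have $\beta/\alpha(x)\le\beta/c=:\bar\gamma<1$ for every $x$, so all marginal parameters lie in the fixed compact interval $[0,\bar\gamma]\subset[0,1)$. As $R$ is analytic on $[0,1)$ and the second moment $\gamma\mapsto\sum_n n^2\theta_\gamma(n)$ is, by the same exponential-family structure (the partition function $Z$ in \eqref{eq:theta-lambda} having radius of convergence $1$), finite and continuous there, the map $\gamma\mapsto\sigma^2(\gamma)$ is continuous on $[0,1)$; hence $K:=\sup_{\gamma\in[0,\bar\gamma]}\sigma^2(\gamma)<\infty$. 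By independence of the coordinates,
$$\mathrm{Var}_{\mu^\alpha_\beta}(S_n)=\frac{1}{(n+1)^2}\sum_{x=0}^n\sigma^2\!\left(\frac{\beta}{\alpha(x)}\right)\le\frac{K}{n+1},$$
which tends to $0$. Chebyshev's inequality then gives $\Prob\left(\left|S_n-\Exp_{\mu^\alpha_\beta}[S_n]\right|>\eps\right)\le K/[(n+1)\eps^2]\to 0$ for every $\eps>0$, and combining this with the convergence of $\Exp_{\mu^\alpha_\beta}[S_n]$ yields $S_n\to\overline{R}^\alpha(\beta)$ in probability.

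The only genuine obstacle is the uniform control of the variances, and this is precisely where the strict inequality $\beta<c$ (rather than $\beta\le c$) enters: it keeps every parameter $\beta/\alpha(x)$ bounded away from the singularity of $Z$ at $1$, so that $\theta_{\beta/\alpha(x)}$ has uniformly bounded variance. Were $\beta=c$ allowed, sites attaining the infimum in \eqref{inf_alpha} would carry the degenerate marginal $\theta_1=\delta_{+\infty}$ and the estimate would collapse, in agreement with the restriction $\beta\in(0,c)\cup\{0\}$ imposed in the statement.
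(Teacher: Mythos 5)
Your proof is correct and follows essentially the same route as the paper's (Appendix \ref{app:justify}): convergence of the mean via the definition of $\overline{R}^\alpha$, an $O(1/n)$ variance bound for the independent coordinates under the product measure, and Tchebychev's inequality. The only cosmetic difference is that you bound the per-site variances uniformly by compactness of $[0,\beta/c]\subset[0,1)$, whereas the paper averages them using Assumption \ref{assumption_ergo} to obtain the sharper asymptotic $\overline{V}(\beta)/n$; both give the same conclusion.
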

\begin{remark}\label{remark_failure}
The limit \eqref{justify_mean_beta} may not hold in the case $\beta=c$. This is why we did 
not extend the reindexation \eqref{invariant_density} to this value of $\beta$ and 
$\rho=\overline{R}^\alpha(c)=\rho_c^\alpha$.
\end{remark}
Lemma \ref{lemma_inter} is proved in Appendix \ref{app:justify}.
We next define the subcritical part of the effective flux function as follows:
\begin{eqnarray}\label{def_flux}
f^\alpha(\rho) & = & (p-q)\left(\overline{R}^\alpha\right)^{-1}(\rho), \quad\forall\rho<\rho_c^{\alpha}
\end{eqnarray}
We extend the function $f^\alpha$ to densities $\rho\geq\rho_c$ by setting
\begin{eqnarray}\label{extension_flux}
f^\alpha(\rho)  & = &  (p-q)c, \quad \forall \rho\geq\rho_c^{\alpha}
\end{eqnarray}
 An alternative expression for \eqref{def_flux} is, for any $\rho<\rho_c^\alpha$ and $x\in\Z$, 
\begin{eqnarray}
\label{current_eq}
f^{\alpha}(\rho) &=&\int_{\mathbf{X}}\Big[p\alpha(x)g(\eta(x))-q\alpha(x+1)g[\eta(x+1)]\Big]
d\mu^{\alpha,\rho}(\eta)
\end{eqnarray}
which can be interpreted (cf. \eqref{standard_current}) as the mean current in the equilibrium state 
$\mu^{\alpha,\rho}$ with density $\rho$.
Indeed, it follows from \eqref{eq:theta-lambda} that 
\be\label{mean_rate}
\int_{\mathbf{X}}\alpha(x)g(\eta(x))d\mu^\alpha_{\beta}(\eta)
=\int_{\N}g(n)d\theta_{{\beta}}(n)={\beta}
\ee 
for all $x\in\Z$, $\alpha\in{\bf A}$ and $\beta\in  [0,c]$. 
 Then, by \eqref{mean_rate} and \eqref{invariant_density},
\be\label{as_will_be}
\quad\int_{\mathbf{X}}\Big[p\alpha(x)g(\eta(x))-q\alpha(x+1)g[\eta(x+1)]\Big]d\mu^\alpha_{\beta}(\eta)
=(p-q)\left(\overline{R}^{\alpha}\right)^{-1}(\rho)
\ee
 Some properties of the flux function are stated in Lemma \ref{lemma_properties_flux}.
 In the sequel, we shall often omit the superscript $\alpha$, and write  $\overline{R}$,  $f$ and $\rho_c$. 
\subsection{The hydrodynamic limit}
We first recall some standard 
definitions in hydrodynamic limit theory. We denote by
$\mathcal M(\R)$ the set of Radon measures on $\R$. To a particle configuration
 $\eta\in{\mathbf{X}}$, we associate a sequence of empirical measures 
 $(\pi^N(\eta):\,N\in\N\setminus\{0\})$ defined by
\[
\pi^N(\eta):=\frac{1}{N}\sum_{y\in\Z}\eta(y)\delta_{y/N}\in\mathcal M(\R)
\]
Let $\rho_0(.)\in L^\infty(\R)$, and let $(\eta^N_0)_{N\in\N\setminus\{0\}}$ denote 
a sequence of $\mathbf{X}$-valued random variables. We say this sequence has 
limiting density profile $\rho_0(.)$, if the sequence of empirical measures 
$\pi^N(\eta^N_0)$ converges in probability to the deterministic measure 
$\rho_0(.)dx$ with respect to the topology of vague convergence. 
We can now state our  result. 
The following additional assumption on the environment will be required for one of the statements. 
\begin{assumption}\label{assumption_dense} We say that the environment $\alpha$ 
has  {\em macroscopically dense defects} if 
there exists a sequence  of sites  $(x_n)_{n\in\Z}$  such that 
\be\label{assumption_afgl}
\forall n\in\Z,\,x_n<x_{n+1};\quad
\lim_{n\to\pm\infty}\alpha(x_n)=c
\ee
and
\be\label{assumption_afgl_0}
\lim_{n\to\pm\infty}\frac{x_{n+1}}{x_n}=1
\ee
\end{assumption}
\begin{theorem}
\label{th_hydro}
Assume the environment $\alpha$ satisfies  Assumption \ref{assumption_ergo}, 
and the  sequence $(\eta^N_0)_{N\in\N\setminus\{0\}}$ has limiting density profile $\rho_0(.)\in L^\infty(\R)$. 
For each $N\in\N\setminus\{0\}$, let  $(\eta^{\alpha,N}_{t})_{t\geq 0}$  denote the process with initial 
configuration $\eta^N_0$ and generator \eqref{generator}. Assume either that 
the initial data is subcritical, that is $\rho_0(.)< \rho_c$; or, that the 
defect density Assumption \ref{assumption_dense} holds.
 Let $\rho(.,.)$ denote the entropy solution to 
\be\label{conservation_law}
\partial_t \rho(x,t)+\partial_x f[\rho(x,t)]=0
\ee
 with initial datum $\rho_0(.)$. 
Then for any $t>0$,
 the sequence  $(\eta^{\alpha,N}_{Nt})_{N\in\N\setminus\{0\}}$ 
 has limiting density profile $\rho(.,t)$.
\end{theorem}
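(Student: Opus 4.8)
The plan is to deploy the constructive approach to Euler hydrodynamics for attractive particle systems, which is tailored to exactly this setting since the process $(\eta_t^\alpha)$ is attractive (the jump rate $g$ is nondecreasing, giving a monotone basic coupling of two copies). The backbone of this approach reduces the general $L^\infty$ Cauchy datum $\rho_0$ to Riemann data through three ingredients: (i) \emph{macroscopic stability}, the fact that the number of discrepancies between two basic-coupled copies, measured in the empirical $L^1$ norm, does not increase on the hydrodynamic scale; (ii) a finite-propagation and patching argument that writes the entropy solution of \eqref{conservation_law} as a superposition of local Riemann solutions; and (iii) Kruzhkov's uniqueness theorem, so that once every subsequential limit of $\pi^N(\eta^{\alpha,N}_{Nt})$ is shown to be \emph{an} entropy solution, it must coincide with \emph{the} entropy solution with datum $\rho_0$. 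Thus the real work lies in establishing the hydrodynamic limit for Riemann initial profiles and in controlling the supercritical regime.

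For the \emph{subcritical} case $\rho_0(\cdot)<\rho_c$, the maximum principle for entropy solutions keeps $\rho(\cdot,t)<\rho_c$ for all $t$, so the dynamics never leaves the range in which genuine product invariant measures exist. Here I would build the Riemann solution from the equilibrium measures $\mu^{\alpha,\rho}$ of \eqref{invariant_density}: Lemma \ref{lemma_inter} provides the law of large numbers identifying their macroscopic density as $\rho$, and \eqref{current_eq} identifies their mean current as $f^\alpha(\rho)$. Using Assumption \ref{assumption_ergo} to pass from the quenched environment to its empirical average, together with attractiveness to sandwich the process between translates of these equilibria, one identifies the rarefaction fans and shocks of the Riemann solution and obtains the local current across each macroscopic characteristic. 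This part follows the coupling and current analysis for one-dimensional attractive systems and presents no essential obstruction beyond bookkeeping with the disorder.

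The crux, and the place where Assumption \ref{assumption_dense} enters, is the \emph{supercritical} regime. Above $\rho_c$ there are no invariant measures, so the equilibrium building block of the preceding paragraph is unavailable, and the flux is frozen at its maximal value $f^\alpha\equiv(p-q)c$ by \eqref{extension_flux}. The remedy is to replace the missing equilibria by \emph{pseudo-equilibria}: configurations that locally resemble the critical measure $\mu^\alpha_c$ but carry an additional, macroscopically negligible yet microscopically diverging, excess of mass accumulating on the slow sites where $\alpha$ approaches $c$. Assumption \ref{assumption_dense} is precisely what guarantees a supply of such sites: the conditions $\alpha(x_n)\to c$ and $x_{n+1}/x_n\to 1$ ensure that near-minimal-rate sites are dense on the macroscopic scale, so that excess mass can be absorbed and pinned without dispersing. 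Two estimates must then be proved: an upper bound showing that no macroscopic current can exceed $(p-q)c$, following from the emission rate at the bottleneck slow sites being asymptotically $c$ (cf. \eqref{mean_rate}); and a matching statement that supercritical regions indeed evolve with this constant flux, so that the condensate stays frozen on the Euler scale and reproduces the plateau of $f^\alpha$. I expect this control of the excess mass --- proving that it neither leaks out nor disperses, in the absence of any genuine invariant measure to anchor the comparison --- to be the main obstacle; once it is in place, the pseudo-equilibria can be fed into the same coupling and entropy-production scheme as the subcritical equilibria, and the Kruzhkov identification of the limit closes the argument.
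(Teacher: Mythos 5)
Your overall architecture is the right one and matches the paper's: reduction of the Cauchy problem to the Riemann problem via macroscopic stability and finite propagation (the paper invokes exactly this theorem from \cite{bgrs}), attractiveness as the structural backbone, genuine equilibria below $\rho_c$ and ``pseudo-equilibria'' above it, and Assumption \ref{assumption_dense} entering through the macroscopic density of slow sites. You also correctly isolate the two current bounds needed in the supercritical regime; the paper proves the upper bound by comparison with the source configuration $\eta^{*,y}$ placed at a nearby slow site (Proposition \ref{current_source} plus Lemma \ref{lemma_ergodic}) and the lower bound by domination from below by a subcritical equilibrium at density $\rho_c-\delta$. One small correction: the paper's pseudo-equilibria are not configurations resembling $\mu^\alpha_c$ with excess mass pre-loaded on slow sites; they are simply i.i.d.-type configurations with flat macroscopic density $\rho>\rho_c$ (e.g.\ \eqref{def_inversion_0} or \eqref{complete_inversion_eps}). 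The accumulation at slow sites is a dynamical consequence, not part of the construction.

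The genuine gap is in your final step, where you propose to ``feed the pseudo-equilibria into the same coupling and entropy-production scheme'' and close with a Kruzhkov identification. This is precisely where the standard machinery breaks down, for the reason the paper emphasizes: to identify the limit one must know which (pseudo-)equilibrium the process locally resembles near a moving observer, and the usual way of doing this --- mesoscopic block averages and one/two-block estimates --- fails because block densities blow up at condensation sites and pseudo-equilibria are not stationary, so there is no local-equilibrium statement to appeal to. The paper's substitute is the \emph{interface process} (Proposition \ref{lemma_interface}): for every density level $r$ simultaneously, a nearest-neighbour $\Z$-valued process ${\mathcal X}^{\alpha,r,t}_s$ tracking the single sign change between the Riemann configuration and the coupled copy $\xi^{\alpha,r}_s$, whose generalized inverse $\rho^{\pm,\alpha,t}$ serves as a continuous lattice density profile that never blows up. Combined with a spatial average of the observer's position (to avoid the possibility that the observer travels exactly at a shock speed, which cannot be excluded a priori) and the uniform-in-$\rho$ current and density estimates of Proposition \ref{lemma_inv_meas_2}, this yields the variational formula ${\mathcal G}_{\lambda,\rho}(v)$ for the asymptotic current seen by the observer --- the paper identifies the Riemann solution through this Lax--Oleinik-type formula rather than through entropy production. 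Without the interface process, or some equivalent device for localizing the comparison density in the absence of invariant measures, your sandwich-and-identify step has no way to proceed at supercritical densities.
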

\begin{remark}\label{remark_th_hydro}
 The existence of a sequence satisfying condition \eqref{assumption_afgl} is equivalent to
the property
\be\label{not_too_sparse}
\liminf_{x\to+\infty}\alpha(x)=\liminf_{x\to-\infty}\alpha(x)=c
\ee
for the constant $c$ in \eqref{inf_alpha}.
The additional requirement \eqref{assumption_afgl_0}  
 sets a restriction
on the sparsity of  slow sites (where by ``slow sites''  
we mean sites where the disorder variable becomes arbitrarily close 
or equal to the infimum value $c$).
The role of Assumption \ref{assumption_dense} will be discussed in Section \ref{sec:Gunter}.
In particular, we will see in Subsection \ref{subsec:defect} 
that this condition prevents macroscopic separation 
of slow sites, as the latter could result in a spatially heterogeneous 
conservation law in the hydrodynamic limit.
\end{remark}
{\em A special case.} Of special importance is the so-called 
{\em Riemann} problem, that is the case when the initial data $\rho_0(.)$ has the particular
form
\begin{equation}\label{eq:rie}
R_{\lambda,\rho}(x)=\lambda \mathbf 1_{\{x<0\}}+\rho \mathbf 1_{\{x\geq 0\}}
\end{equation}
for $\lambda,\rho\in\R$. In this case,
entropy solutions can be computed explicitely.
Namely, let $\lambda,\rho\in\R$,  and $h=(\widehat{f}')^{-1}$, 
 where $\widehat{f}$ denotes  the  convex (resp. concave)  envelope of $f$ 
 on $[\lambda,\rho]$ (resp. $[\rho,\lambda]$).  Then (see Proposition 
 \ref{proposition_1} below), the entropy solution is given by 
\be\label{entropy_riemann}
R_{\lambda,\rho}(x,t)=h\left(\frac{x}{t}\right)
\ee
As will be discussed in Section \ref{sec:Gunter}, the phase transition 
can be seen explicitly on such solutions in the form of a front of critical 
density moving to the right after blocking supercritical densities coming from the left.\\ \\
{\em Remarks on the proof of Theorem \ref{th_hydro}.} 
The difficulty of proving hydrodynamics comes from the absence of invariant measures 
and the condensation phenomenon at supercritical densities. This prevents us 
from using the traditional approach to hydrodynamic limits based on local equilibrium, 
because the latter property (\cite{bmrs4}) fails at supercritical densities. 
In that approach, a lattice approximation of the macroscopic profile is defined by 
{\em block averaging}. A lattice version of the macroscopic equation is then 
obtained using {\em block estimates}, thanks to which the microscopic flux function 
can be replaced by a function of the local block average. In our case, 
due to condensation, mesoscopic block densities can blow up around condensation 
sites and fail to reflect the hydrodynamic density.\\ \\
We shall circumvent the impossibility of using the usual approach thanks 
to the following new ideas. First, we shall show that for our purpose, 
we retain sufficient information by replacing the unavailable 
supercritical equilibria by ``pseudo-equilibria'', that are simply systems
 with supercritical homogeneous macroscopic density profile.
Next, in Subsection \ref{subsec:interface}, we define a lattice profile 
in a new way, replacing the usual {\em discrete} block average 
\[
\rho^{N,l}(x,t)=\frac{1}{2l+1}\sum_{y\in\Z:\,|y-\lfloor Nx \rfloor|\leq l}\eta^N_{Nt}(y)
\]
(by discrete we mean that possible values of $\rho^{N,l}$ are discretized densities)
by a {\em continuous} lattice density field $\rho^N(x)$ taking ``real'' density values,
 that is the interface process referred to in the introduction. 
In a space region where $\rho^N$ does not fluctuate much, the system is close to an equilibrium or pseudo-equilibrium configuration
with an {\em a priori} random density parameter (such a configuration is not necessarily a stationary state when the density is indeed random, see Remark \ref{remark_ab} below).\\ \\
%
This new point of view will be coupled to a reduction principle introduced in \cite{bgrs}
 (see also \cite{bgrs2}--\cite{bgrs5}), 
where we showed that proving hydrodynamic limit for the Cauchy problem boils down to 
proving it for the Riemann problem, which can be analyzed more directly. The passage from 
Riemann to Cauchy problem can then be carried out 
in a way similar in spirit to Riemann-based numerical schemes for scalar conservation laws, 
by controlling the propagation of the error committed at successive time steps, when, 
replacing the actual entropy solution with a suitable piecewise constant approximation.
\section{Discussion and examples}\label{sec:Gunter}
 In this section, we   shed more light on Theorem \ref{th_hydro} 
 by giving examples of environments, flux functions and entropy solutions, and
illustrating the role of Assumption  \ref{assumption_dense}.  
\subsection{The flux function}\label{subsec:flux}
 We start by stating basic properties of the flux function. 
 So far, we have defined the critical density $\rho_c^\alpha$ 
 (cf.  \eqref{def_critical}--\eqref{other_def_critical}) 
and the flux function $f^\alpha$ (cf. 
\eqref{def_flux}--\eqref{extension_flux}) associated 
with an environment $\alpha$ satisfying Assumption \ref{assumption_ergo}.
These can be embedded in  the following family of critical densities and flux functions parametrized
by  a  pair $(Q_0,c)$, where $Q_0$ is a probability measure on $[0,1]$, and $c$ satisfies \eqref{support_Q}:
\be\label{flux_parameter}
f^{Q_0,c}(\rho):=\left\{
\ba{lll}
(p-q)\left(\overline{R}^{Q_0}\right)^{-1}(\rho) & \mbox{if} & \rho<\rho_c(Q_0,c)\\
(p-q)c & \mbox{if} & \rho\geq\rho_c(Q_0,c)
\ea
\right.
\ee
where  $\overline{R}^{Q_0}$ is defined by \eqref{average_afgl_ergodic}, and (recalling conventions \eqref{extension_theta}--\eqref{convention_2})  
\be\label{critical_parameter}
\rho_c(Q_0,c):=\int_{[0,1]}R\left[\frac{c}{a}\right]dQ_0(a)=\int_{[c,1]}R\left[\frac{c}{a}\right]dQ_0(a)
\ee
Then, with definitions \eqref{flux_parameter}--\eqref{critical_parameter}, we can write
\be\label{rewrite_parameter}
f^\alpha=f^{Q_0(\alpha),\inf\alpha},\quad \rho_c^\alpha=\rho_c(Q_0(\alpha),\inf\alpha)
\ee
For a given $Q_0$, the maximal value of $c$ is $C:=\inf{\rm supp}\,Q_0$, cf. \eqref{support_Q}.
For this value of $c$, we denote $f^{Q_0,c}$ by $f^{Q_0}$ and $\rho_c(Q_0,c)$ by $\rho_c(Q_0)$:
\be\label{flux_crit_q}
f^{Q_0}:=f^{Q_0,\inf{\rm supp}\,Q_0},\quad
\rho_c(Q_0):=\rho_c(Q_0,\inf{\rm supp}\,Q_0)
\ee
\begin{remark}\label{remark_critpar}
Since $\rho_c(Q_0,c)$ defined by \eqref{critical_parameter} 
is a nondecreasing function of $c$, $\rho_c(Q_0)$ is the maximal 
critical density one can obtain from $Q_0$.  Note that $\rho_c(Q_0)$ 
may be infinite if the integral in \eqref{critical_parameter} diverges 
for $c=C$, but $\rho_c(Q_0,c)$ is always finite when $c<C$.
\end{remark}
In the context of Example \ref{example_random}, we thus have
\begin{example}\label{example_random_bis}
Let $Q$ be a spatially ergodic probability measure on $\bf A$ with marginal $Q_0$
(for instance, $Q=Q_0^{\otimes\Z}$). Then, for $Q$-almost every $\alpha\in\bf A$,
\be\label{fluxcrit_ex}
f^\alpha=f^{Q_0},\quad
\rho_c^\alpha=\rho_c(Q_0)
\ee
\end{example}
\begin{lemma}\label{lemma_properties_flux}\mbox{}\\ 
 (i)  The functions $\overline{R}^{ Q_0}$  and
$f^{ Q_0,c}$ are increasing and analytic, respectively from   $[0,C)$ to $[0,\overline{R}^{Q_0}(C))$ 
and from $[0,\rho_c(Q_0,c))$ to $[0,(p-q)c)$. \\ 
 (ii) The function  $f^{ Q_0,c}$ is $(p-q)$-Lipschitz. \\
 (iii) The function $f^{ Q_0,c}$ is concave if $g$  satisfies  
\be\label{concave_g}n\mapsto g(n+1)-g(n)\mbox{ is nonincreasing}\ee 
\end{lemma}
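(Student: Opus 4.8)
The plan is to analyze the three parts in order, working from the definitions of $\overline{R}^{Q_0}$ in \eqref{average_afgl_ergodic} and $f^{Q_0,c}$ in \eqref{flux_parameter}. The core object is the function $\beta\mapsto\overline{R}^{Q_0}(\beta)=\int_{[C,1]}R[\beta/a]\,dQ_0(a)$ on $[0,C]$, since $f^{Q_0,c}$ is built from its inverse.

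For part (i), I would first establish the stated properties of $\overline{R}^{Q_0}$ on $[0,C)$. Recall from \eqref{mean_density} that $R$ is analytic and strictly increasing on $[0,1)$. For each fixed $\beta\in[0,C)$, the map $a\mapsto R[\beta/a]$ is bounded on $[C,1]$ (since $\beta/a<1$ there, so $R[\beta/a]$ stays away from the singularity at $1$), which gives finiteness. Monotonicity in $\beta$ is immediate because $R$ is increasing and $\beta/a$ increases in $\beta$; strict monotonicity follows since the integrand is strictly increasing in $\beta$ on a set of positive $Q_0$-measure. For analyticity, I would differentiate under the integral sign: on any compact subinterval $[0,\beta_0]\subset[0,C)$ the derivatives $\partial_\beta^k R[\beta/a]$ are uniformly bounded in $a\in[C,1]$, so the series defining $\overline{R}^{Q_0}$ converges to an analytic function, and dominated convergence justifies term-by-term differentiation. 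The range is $[0,\overline{R}^{Q_0}(C))$ by continuity and the monotone limit as $\beta\uparrow C$. Since $f^{Q_0,c}(\rho)=(p-q)(\overline{R}^{Q_0})^{-1}(\rho)$ for $\rho<\rho_c(Q_0,c)$, and the inverse of an increasing analytic function with nonvanishing derivative is again increasing and analytic, part (i) for $f^{Q_0,c}$ follows, with the range $[0,c)$ because $(\overline{R}^{Q_0})^{-1}$ maps $[0,\rho_c(Q_0,c))$ onto $[0,c)$.

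For part (ii), the $(p-q)$-Lipschitz bound, I would show that the derivative of $f^{Q_0,c}$ is bounded by $p-q$. Since $f^{Q_0,c}=(p-q)(\overline{R}^{Q_0})^{-1}$, this amounts to showing $(\overline{R}^{Q_0})^{-1}$ is $1$-Lipschitz, i.e. that $(\overline{R}^{Q_0})'(\beta)\geq 1$ on $(0,C)$. Differentiating gives $(\overline{R}^{Q_0})'(\beta)=\int_{[C,1]}a^{-1}R'[\beta/a]\,dQ_0(a)$. Here I expect to use a structural identity for the homogeneous single-site measure: because $\theta_\beta$ is the exponential family \eqref{eq:theta-lambda} with mean $R(\beta)$ and mean rate $\int g\,d\theta_\beta=\beta$ (cf. \eqref{mean_rate}), one has the variance-type identity $\beta R'(\beta)=\mathrm{Var}_{\theta_\beta}(\eta)$ together with $R'(\beta)\geq$ a quantity controlled by $g_\infty=1$; the bound $a^{-1}R'[\beta/a]\geq R'[\beta]\geq$ enough to integrate to $\geq 1$ should fall out, using $a\leq 1$ and monotonicity of $R'$. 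The flat piece $f^{Q_0,c}\equiv(p-q)c$ for $\rho\geq\rho_c$ is trivially Lipschitz, and continuity at the junction (established in (i)) makes the global Lipschitz constant $p-q$.

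For part (iii), concavity under the condition \eqref{concave_g} that $n\mapsto g(n+1)-g(n)$ is nonincreasing, the plan is to reduce concavity of $f^{Q_0,c}$ (equivalently of $(\overline{R}^{Q_0})^{-1}$, which for an increasing function means $\overline{R}^{Q_0}$ is \emph{convex}) to a convexity property of $R$ itself. Concavity of the inverse of $\overline{R}^{Q_0}$ is equivalent to $\overline{R}^{Q_0}$ being convex, i.e. $(\overline{R}^{Q_0})''\geq 0$; since $\overline{R}^{Q_0}(\beta)=\int R[\beta/a]\,dQ_0(a)$ and $\partial_\beta^2 R[\beta/a]=a^{-2}R''[\beta/a]$, it suffices that $R$ be convex on $[0,1)$. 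I would prove $R''\geq 0$ directly from \eqref{concave_g}: the condition is exactly the log-concavity/ratio-monotonicity of the weights $1/g(n)!$ that controls the sign of the third moment combination appearing in $R''(\beta)$. Concretely, writing $R(\beta)=\beta Z'(\beta)/Z(\beta)$ and expressing $R'$ and $R''$ through cumulants of $\theta_\beta$, the convexity of $R$ reduces to a monotonicity of the variance of $\theta_\beta$ in $\beta$, which \eqref{concave_g} guarantees. \textbf{The main obstacle} I anticipate is precisely this last computation: establishing $R''\geq 0$ from \eqref{concave_g} requires a careful cumulant or correlation-inequality argument (an FKG- or Chebyshev-type association estimate on the single-site law $\theta_\beta$), rather than a one-line derivative bound, and getting the sign right is where the assumption \eqref{concave_g} must be used in an essential way.
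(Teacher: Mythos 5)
Your part (i) is correct and essentially the paper's own argument: differentiate \eqref{average_afgl_ergodic} under the integral and use that $R$ is increasing and analytic on $[0,1)$. The problems are in (ii) and (iii). For (ii), reducing the $(p-q)$-Lipschitz bound to $(\overline{R}^{Q_0})'(\beta)=\int_{[C,1]}a^{-1}R'[\beta/a]\,dQ_0(a)\geq 1$ is a legitimate strategy, but the step you offer to deliver it does not work: you invoke ``monotonicity of $R'$'' to get $R'[\beta/a]\geq R'[\beta]$, yet $R'$ nondecreasing means $R$ convex, which is precisely the property that only holds under the extra hypothesis \eqref{concave_g} of part (iii) and is false for a general nondecreasing bounded $g$. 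What you actually need is the pointwise bound $R'\geq 1$ on $[0,1)$ (then $a^{-1}R'[\beta/a]\geq 1$ since $a\leq 1$), and you never establish it --- ``the bound should fall out'' is not a proof. It does hold, and the paper proves the Lipschitz bound without any derivative computation: take a monotone (Strassen) coupling $\tilde\mu$ of $\mu^{\alpha,\rho}$ and $\mu^{\alpha,\rho'}$, observe that under the coupling either $\zeta(x)=\eta(x)$ or $g(\zeta(x))-g(\eta(x))\leq 1\leq \zeta(x)-\eta(x)$ because $g$ is nondecreasing and bounded by $1$, integrate using \eqref{mean_rate} to get $\beta'-\beta\leq R[\beta'/\alpha(x)]-R[\beta/\alpha(x)]$, and average over $x$. (If you prefer your derivative route, the missing inequality $R'(\beta)\geq 1$ is equivalent to $\mathrm{Var}_{\theta_\beta}(\eta)\geq\beta$ and follows from $\beta=\mathrm{Cov}_{\theta_\beta}(g(\eta),\eta)\leq\mathrm{Var}_{\theta_\beta}(\eta)=\beta R'(\beta)$, using that $g$ is nondecreasing with increments at most $1$; but this has to be written out, not assumed.)

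For (iii), your reduction --- concavity of $f^{Q_0,c}$ amounts to convexity of $\overline{R}^{Q_0}$, which follows from convexity of $R$ because $\partial_\beta^2 R[\beta/a]=a^{-2}R''[\beta/a]\geq 0$ --- is exactly the paper's. But the paper then disposes of the remaining claim, that \eqref{concave_g} implies $R$ convex, by citing \cite{bs}, whereas you explicitly flag the proof of $R''\geq 0$ as your ``main obstacle'' and do not supply it. As written, this is a genuine gap: the one nontrivial input of part (iii) is left unproved, and the correlation-inequality argument you gesture at is exactly the content that needs to be carried out (or sourced).
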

\begin{proof}{Lemma}{lemma_properties_flux}
\mbox{}\\ 
 {\em Proof of (i).}   Since $R$ is continuously differentiable on $[0,1)$, by \eqref{average_afgl_ergodic},   
\begin{eqnarray}\nonumber
\left(\overline{R}^{Q_0}\right)'(\beta) & = & 
\int_{[0,1]}
\frac{1}{a}R'\left[\frac{\beta}{a}\right]dQ_0(a)
\label{rbarprime}
\end{eqnarray}
Hence, since the function $R$ defined by \eqref{mean_density} is increasing,  
the function $\overline{R}^{Q_0}$ is increasing,  thus $f^{Q_0,c}$ is increasing on $[0,\rho_c(Q_0,c)]\cap\R$. 
Similarly, \eqref{average_afgl_ergodic} and analyticity of $R$ 
imply analyticity of $\overline{R}^{Q_0}$ and $f^{Q_0,c}$.  \\ \\
 {\em Proof of (ii).}
This  boils down to proving that for any $x\in\Z$,  for any $\rho\in[0,\rho_c)$, 
\[
\left(\overline{R}^{Q_0}\right)^{-1}(\rho)=\int_{\overline{\mathbf{X}}}\alpha(x)g[\eta(x)]d\mu^{\alpha,\rho}(\eta)
\]
is $1$-Lipschitz. Let $\rho\leq\rho'<\rho_c$. 
By \eqref{stoch_inc} and \eqref{invariant_density},  Strassen's Theorem 
(see e.g. \cite{ligbook}) yields a coupling measure 
$\tilde{\mu}(d\eta,d\zeta)$ of $\mu^{\alpha,\rho}$ 
and $\mu^{\alpha,\rho'}$ under which $\eta\leq\zeta$ holds a.s..  
Then,  setting 
\begin{equation}\label{beta-beta'}
\beta=\left(\overline{R}^{Q_0}\right)^{-1}(\rho)\qquad\mbox{and}\qquad
\beta'=\left(\overline{R}^{Q_0}\right)^{-1}(\rho')
\end{equation} 
we have by \eqref{mean_rate}, 
\begin{eqnarray}\nonumber
\left(\overline{R}^{Q_0}\right)^{-1}(\rho')-\left(\overline{R}^{Q_0}\right)^{-1}(\rho)
& = & \alpha(x)\int_{\overline{\mathbf{X}}}[g(\zeta(x))-g(\eta(x))]d\tilde{\mu}(\eta,\zeta)\\
& \leq & \int_{\overline{\mathbf{X}}}[\zeta(x)-\eta(x)]
d\tilde{\mu}(\eta,\zeta)\nonumber\\& = &R\left[
\frac{\beta'}{\alpha(x)}
\right]-R\left[
\frac{\beta}{\alpha(x)}
\right] \label{theaboveineq}
\end{eqnarray}
In the inequality we used $\alpha(x)\leq 1$, \eqref{invariant_density}, 
and the fact that $g$ is nondecreasing and bounded above by $1$,  so that either
$\zeta(x)=\eta(x)$, or $g(\zeta(x))-g(\eta(x))\le 1\le \zeta(x)-\eta(x)$. 
Averaging the inequality \eqref{theaboveineq} over $x=0,\ldots,n$ 
and using \eqref{average_afgl} yields the upper bound
$\overline{R}^{ Q_0}(\beta')-\overline{R}^{ Q_0}(\beta)$  
which is equal to  $\rho'-\rho$ by \eqref{beta-beta'}.  \\ \\
 {\em Proof of (iii). } It is shown in \cite{bs} that \eqref{concave_g} implies concavity of
the flux function for the corresponding {\em homogeneous} zero-range process, or equivalently, 
convexity of $R$. The latter property combined with \eqref{average_afgl_ergodic} 
 implies convexity of $\overline{R}=\overline{R}^{Q_0}$, thus 
concavity of  $f^{Q_0,c}$ defined by \eqref{flux_parameter}. 
\end{proof}
\subsection{A family of deterministic environments}\label{subsec:env}
As pointed out after Example \ref{example_random}, an ergodic environment always yields equality in \eqref{support_Q}.
In this subsection, to illustrate the more general situation where this equality may fail, we define a family of deterministic environments that produces a given pair $(Q_0,c)$ satisfying \eqref{support_Q}, and will serve again later in the section.\\ 
Let 
\be\label{defect_set}
{\mathcal X}:=\{x_n,\,n\in\Z\}
, \quad  {\mathcal Y}:=\{{y}_n,\,n\in\Z\} 
\ee
be  doubly 
infinite increasing $\Z$-valued sequences  (if $c=C$, only $\mathcal Y$ will be used, see \eqref{inversion} below).  
Let also $(\alpha_n)_{n\in\Z}$ be a doubly 
infinite $[c,1]$-valued sequence satisfying 
\be\label{sequence_alpha}\lim_{n\to\pm\infty}\alpha_n=c\ee
For $x\in\Z$, we set 
\begin{eqnarray}
u(x) & := & \sum_{n\in\Z}1_{[{y}_n,{y}_{n+1})}(x)\frac{x- y_n}{ y_{n+1}-y_n}\label{sample_uniform}\\ \nonumber\\
\alpha(x) & := &  F_{Q_0}^{-1}(u(x)) \label{inversion}
{\bf 1}_{\Z\setminus{\mathcal X}}(x)+{\bf 1}_{\{c<C\}}\sum_{n\in\Z}\alpha_n{\bf 1}_{\{ {x}_n\}}(x)
\end{eqnarray}
where $F_{Q_0}(t):=Q_0((-\infty,t])$ denotes the  cumulative distribution function (c.d.f.) 
of $Q_0$, and $F_{Q_0}^{-1}$ its generalized inverse.  The following lemma is established in 
Appendix \ref{app:uniform}.
\begin{lemma}\label{lemma_family}
\mbox{}\\ 
(i) The environment $(\alpha(x))_{x\in\Z}$ satisfies Assumption \ref{assumption_ergo} if and only if  
%
\begin{eqnarray}\label{assumption_afgl_1}
\lim_{n\to\pm\infty}\frac{y_{n+1}}{y_n} & = & 1\\
\label{growth_inc}
\lim_{n\to\pm\infty}\frac{n}{y_n} & = & 0\\
 \label{growth_inc_x}\lim_{n\to\pm\infty}\frac{n}{x_n} & = & 0
\end{eqnarray}
(ii) The environment $(\alpha(x))_{x\in\Z}$ satisfies Assumption \ref{assumption_dense} if and only if $c=C$ and \eqref{assumption_afgl_1} holds, or
$c<C$ and the sequence  $(x_n)_{n\in\Z}$  satisfies condition \eqref{assumption_afgl_0}. 
\end{lemma}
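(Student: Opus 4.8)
The plan is to prove each equivalence by relating the limiting empirical distribution of the environment (Assumption \ref{assumption_ergo}) to the geometry of the two sequences $\mathcal X$ and $\mathcal Y$, treating the two summands in \eqref{inversion} separately. The key observation is that the term $F_{Q_0}^{-1}(u(x))\mathbf 1_{\Z\setminus\mathcal X}(x)$ carries the ``bulk'' of the environment and determines the marginal $Q_0$, while the term $\mathbf 1_{\{c<C\}}\sum_n\alpha_n\mathbf 1_{\{x_n\}}(x)$ is a sparse perturbation supported on $\mathcal X$ that can lower the infimum $c$ below $C$ without affecting the limiting empirical measure $Q_0$.

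\emph{For part (i),} I would first analyze the bulk term. The function $u$ defined in \eqref{sample_uniform} interpolates linearly from $0$ to $1$ on each interval $[y_n,y_{n+1})$, so on a long block the values $u(x)$ sweep out the interval $[0,1]$ approximately uniformly. Composing with $F_{Q_0}^{-1}$ then samples $Q_0$, by the standard fact that if $U$ is uniform on $[0,1]$ then $F_{Q_0}^{-1}(U)\sim Q_0$. The precise statement needed is that for the Cesàro averages in \eqref{eq:assumption_ergo} to converge to $Q_0$, the blocks $[y_n,y_{n+1})$ must not be too long relative to their position --- this is exactly condition \eqref{assumption_afgl_1}, $y_{n+1}/y_n\to 1$, which guarantees that no single block dominates a partial average and that the linear sweep accumulates to the Lebesgue measure on $[0,1]$ in the limit. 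Conditions \eqref{growth_inc} and \eqref{growth_inc_x} control the defect sites: the set $\mathcal Y$ (resp. $\mathcal X$) must have density zero, i.e. $n/y_n\to 0$ (resp. $n/x_n\to 0$), so that removing or perturbing the $y_n$ (resp. $x_n$) sites does not alter the empirical measure. I would show necessity by exhibiting, for each failed condition, a subsequence of partial averages that converges to a measure different from $Q_0$ (for instance, if $y_{n+1}/y_n\not\to 1$ a mass atom survives; if $n/x_n\not\to 0$ the defect sites in $\mathcal X$ contribute nonnegligible mass).

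\emph{For part (ii),} I would split into the two cases. When $c=C$, the defect set $\mathcal X$ is unused (the indicator $\mathbf 1_{\{c<C\}}$ vanishes), so the slow sites are precisely governed by the sequence $\mathcal Y$ where $u(x)=0$ and hence $\alpha(x)=F_{Q_0}^{-1}(0)=C=c$; Assumption \ref{assumption_dense} then reduces to requiring a sequence of sites approaching $c$ with the spacing condition \eqref{assumption_afgl_0}, which is furnished exactly by $\mathcal Y$ under \eqref{assumption_afgl_1}. When $c<C$, the genuine slow sites (those with $\alpha(x)$ near $c$) are the $x_n\in\mathcal X$ by \eqref{sequence_alpha}, so macroscopic density of defects in the sense of \eqref{assumption_afgl}--\eqref{assumption_afgl_0} is equivalent to $(x_n)$ satisfying the ratio condition \eqref{assumption_afgl_0}.

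The main obstacle I anticipate is the necessity direction of part (i), specifically disentangling the three conditions: showing that each is separately necessary requires constructing counterexample environments in which two of the conditions hold but the third fails, and verifying that the resulting Cesàro limit either fails to exist or differs from $Q_0$. The interplay between \eqref{assumption_afgl_1} (block regularity) and \eqref{growth_inc} (sparsity of $\mathcal Y$) is delicate, since a block structure that is too irregular can simultaneously violate uniformity of the sweep and the density-zero property; care is needed to isolate the effect of each. I expect the sufficiency direction to be more routine, following from a direct Cesàro estimate once the bulk sampling argument is set up, but the bookkeeping in handling both directions $n\to+\infty$ and $n\to-\infty$ symmetrically in \eqref{eq:assumption_ergo} will require attention.
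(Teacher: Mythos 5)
Your overall strategy is the paper's: split \eqref{inversion} into the bulk term, which must reproduce $Q_0$ by the inversion method (equivalently, the $u$-values must satisfy \eqref{empirical_uniform}), and the defect term, which must contribute nothing to the empirical measure, forcing \eqref{growth_inc_x}; then case-split on $c=C$ versus $c<C$ for part \textit{(ii)}. The sufficiency half is carried out in the paper essentially as you describe: the empirical average of a test function $f$ over $[0,y_n)$ is a weighted combination of per-block Riemann sums, each approximating $\int_0^1 f$ with error $\Vert f\Vert_\infty/l_k$ where $l_k=y_{k+1}-y_k$, so the aggregate error is $\Vert f\Vert_\infty\, n/y_n$, which \eqref{growth_inc} kills; condition \eqref{assumption_afgl_1} then interpolates from the subsequence $(y_n)$ to arbitrary $n$ by a sandwich argument.

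The gap is in the necessity direction, exactly where you flag your anticipated obstacle, and the mechanism you sketch for \eqref{assumption_afgl_1} is not the right one. If $y_{n+1}/y_n\not\to 1$, no ``mass atom survives'': the obstruction is that a Ces\`aro average stopped at an interior point of a macroscopically long block has seen only a truncated sweep $u\in[0,\varepsilon]$ of that block, which skews the empirical measure toward small $u$. The paper detects this with the single test function $f(u)=u$ evaluated at the block midpoints $z_n=(y_n+y_{n+1})/2$ along a subsequence with $y_{n+1}/y_n\to a>1$, obtaining the limit $(a+3)/(4(a+1))<1/2$, incompatible with $\mathcal U(0,1)$. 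Necessity of \eqref{growth_inc} likewise comes from an exact computation, $\frac{1}{y_n}\sum_{x<y_n}u(x)=\frac{y_n+n}{2y_n}$, rather than from ``removing the sites $y_n$'' (those sites belong to the bulk term; equivalently, they all carry $u=0$, creating an atom of mass $n/y_n$ at $0$ which $\mathcal U(0,1)$ forbids). Finally, in part \textit{(ii)} with $c=C$ you only argue the ``if'' direction; for the converse you still need the observation that any witness sequence $(t_n)$ with $\alpha(t_n)\to C$ must lie at relative distance $o(1)$ from the left endpoints $y_n$ of the blocks, so that $t_{n+1}/t_n\to 1$ transfers to $y_{n+1}/y_n\to 1$, i.e.\ Assumption \ref{assumption_dense} genuinely forces \eqref{assumption_afgl_1}.
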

 To prove \textit{(i)} of Lemma \ref{lemma_family}, we must essentially 
 prove that assumption \eqref{growth_inc} is necessary and sufficient for \eqref{sample_uniform} to yield a uniformly distributed 
set of values in the sense that 
\be\label{empirical_uniform}
\lim_{n\to+\infty}\frac{1}{n+1}\sum_{x=0}^n\delta_{u(x)}=
\lim_{n\to+\infty}\frac{1}{n+1}\sum_{x=-n}^0\delta_{u(x)}={\mathcal U}(0,1)
\ee
Indeed, the first term on the r.h.s. of  \eqref{inversion} is nothing 
but the inversion method to generate an arbitrary random variable from a uniform one.
 The environment defined by  \eqref{inversion} has the following interpretation. The first term has fast oscillations that reproduce an ergodic-like behaviour with distribution $Q_0$. This produces the subcritical part of the flux, that is the first line of \eqref{flux_parameter}. When $c<C$, the second term introduces a dense sequence of defects that leads to  \eqref{inf_alpha} and to the supercritical part of the flux, that is the second line of \eqref{flux_parameter}.  Condition \eqref{growth_inc_x} prevents this term from adding an atom at $c$ to $Q_0$.  \\ \\ 
 The following example fulfills the requirements of Lemma \ref{lemma_family}.
\begin{example}\label{Example1} 
$x_n=y_n={\bf 1}_{\{n\neq 0\}}\sgn(n)\lfloor{|n|}^\kappa\rfloor$ 
 with $\kappa>1$,  $c\leq C$. 
\end{example} 
 In the next subsection, we explain why the behaviour of the zero-range process with environment \eqref{inversion} is expected to be different from the one described by Theorem \ref{th_hydro} when the conditions of Lemma \ref{lemma_family} are not fulfilled.
After this, we will always assume these conditions satisfied. 
\subsection{The defect density condition}
\label{subsec:defect}
 Conditions \eqref{assumption_afgl_0} and \eqref{assumption_afgl_1} can be interpreted by saying that
there is no macroscopic separation of points in the corresponding sequence. When these conditions are not satisfied, 
a hidden scaling may emerge,  and the hydrodynamic limit falls outside the scope of Theorem \ref{th_hydro}, although
we  formulate some natural conjectures below. 
The following example  illustrates failure of \eqref{assumption_afgl_0} while \eqref{assumption_afgl_1}--\eqref{growth_inc} hold.
\begin{example}\label{Example3} 
 $x_n={\bf 1}_{\{n\neq 0\}}\sgn(n)\lfloor\kappa^{|n|}\rfloor$,
$y_n={\rm sgn}(n)\lfloor |n|^{\kappa'}\rfloor$ with $\kappa,\kappa'>1$, $c<C$. \end{example} 
 In this example, the set $\mathcal X_N:=N^{-1}\mathcal X$ of rescaled defect 
 locations has a subsequential scaling limit when $N\to+\infty$ with $N\in\mathcal N$, where
\be\label{def_subseq}
{\mathcal N}:=\{\kappa^n:\,n\in\N\setminus\{0\}\}
\ee
Indeed,
\[
\lim_{N\to+\infty,\,N\in\mathcal N}{\bf 1}_{\mathcal X_N}={\bf 1}_B,\quad\mbox{where }B:=\{\pm\kappa^n:\,n\in\Z\}
\]
We then expect the hydrodynamic limit to be given by  
\eqref{conservation_law} outside $B$, and completed by specific boundary 
conditions  on $B$, to indicate that the macroscopic current at these points cannot exceed $c$. 
 These boundary conditions are similar to those introduced 
 in \cite{lan2} to describe the hydrodynamic limit of the 
 totally asymmetric zero-range process with a {\em single} defect.  
The hydrodynamic profile  $\rho(.,t)$ 
at time $t$ is expected to be a measure of the form 
\be\label{form_sol}
\rho(.,t)=\sum_{x\in B}m(x,t)\delta_x+\widetilde{\rho}(.,t)
\ee
where $\widetilde{\rho}$ is a weak entropy solution of 
\eqref{conservation_law} outside $B$, while on $B$, $\rho$
satisfies the boundary conditions
\be\label{bc1}
m(x,t)>0\Rightarrow \frac{\partial m}{\partial t}(x,t)=[f(\widetilde{\rho}(x-,t))-(p-q)c],\quad x\in B
\ee
and $\widetilde{\rho}$ satisfies the boundary conditions
\be\label{bc2}
\widetilde{\rho}(x+,t)=\widetilde{\rho}(x-,t)\wedge\rho_c,\quad x\in B
\ee
These boundary conditions stipulate
that on a time interval where the flux coming from the left exceeds 
$c$ at some $x\in B$, macroscopic condensation occurs
in the form of a growing Dirac mass. 
When the flux comes back below $c$, the condensate starts decreasing 
until either it disappears, or starts growing again if the flux again 
starts exceeding $c$. \\ \\
 The next example satisfies \eqref{assumption_afgl_0} but violates condition \eqref{assumption_afgl_1}. 
\begin{example}\label{Example2} 
 $y_n={\bf 1}_{\{n\neq 0\}}\sgn(n)\lfloor\kappa^{|n|}\rfloor$,
$x_n={\rm sgn}(n)\lfloor |n|^{\kappa'}\rfloor$ with $\kappa,\kappa'>1$,  $c\leq C$.  \end{example} 
 In this case, the environment \eqref{inversion} has a macroscopic profile 
 under the following subsequential scaling limit (with $\mathcal N$ as in \eqref{def_subseq}):
\be
\label{limit_env}
\overline{\alpha}(x):=\lim_{N\to+\infty,\,N\in{\mathcal N}}\alpha(\lfloor Nx\rfloor)
\ee
where
\begin{eqnarray}
\overline{\alpha}(x) & = & 
\sum_{n\in\Z}1_{(\kappa^n,\kappa^{n+1})}(x)F_{Q_0}^{-1}\left(
\frac{x-\kappa^n}{\kappa^{n+1}-\kappa^n}
\right)\nonumber\\
& + & 
\sum_{n\in\Z}1_{(-\kappa^{n+1},-\kappa^{n})}(x)F_{Q_0}^{-1}\left(
\frac{x+\kappa^{n+1}}{\kappa^{n+1}-\kappa^n}
\right) 
\end{eqnarray}
%
%
We then expect the hydrodynamic limit as $N\to+\infty$ in $\mathcal N$ to be given 
by the {\em spatially inhomogeneous} conservation law
\be\label{conservation_inhom}
\partial_t\rho(t,x)+\partial_x\left\{
\overline{\alpha}(x)\min\left[
f_{\rm hom}(\rho(t,x)),(p-q)c
\right]
\right\}=0
\ee
where
\be\label{def_f_hom}
f_{\rm hom}(\rho):=(p-q)R^{-1}(\rho)
\ee
is the flux function of the {\em homogeneous} zero-range process defined 
by \eqref{generator} for $\alpha(.)$ identically equal to $1$.
 The truncation by $(p-q)c$ in \eqref{conservation_inhom} is due to 
 the dense sequence of defects $(x_n)_{n\in\Z}$.
Similar hydrodynamic limits without this term appear in  \cite{b,cek,cr,rez2}.\\ \\
Finally, both conditions \eqref{assumption_afgl_0} and \eqref{assumption_afgl_1} 
may fail simultaneously, as in the following example.
\begin{example}\label{Example5} 
 $x_n=y_n={\bf 1}_{\{n\neq 0\}}\sgn(n)\lfloor\kappa^{|n|}\rfloor$,
 with $\kappa>1$, $c<C$. \end{example} 
Here, we expect the hydrodynamic limit as $N\to+\infty$ in $\mathcal N$ to be of the form 
\[
\partial_t\rho(t,x)+\partial_x\left\{
\overline{\alpha}(x)
f_{\rm hom}(\rho(t,x))
\right\}=0
\]
outside $B$, and the truncation term $(p-q)c$ in \eqref{conservation_inhom} 
to be replaced by
boundary conditions \eqref{bc1}--\eqref{bc2} (where the r.h.s. 
of \eqref{bc1} is now multiplied by $\overline{\alpha}(x)$).\\ \\
{}{}From now on, we assume that conditions \eqref{assumption_afgl_0} and \eqref{assumption_afgl_1}--\eqref{growth_inc_x} of Lemma \ref{lemma_family} are satisfied.
\subsection{The dilute limit}\label{subsec:dilute}
 In general, the subcritical part of the flux is not very explicit, unless specific choices of $Q_0$ make
\eqref{average_afgl_ergodic} computable.
In this subsection, we discuss the  so-called {\em dilute limit} (\cite{kru, bb}),
that is a homogeneous system perturbed by a sequence of defects with vanishing density (but macroscopically dense in the sense of Assumption \ref{assumption_dense}). In this case, the subcritical flux is exactly the flux function $f_{\rm hom}$ (see \eqref{def_f_hom}) of the {\em homogeneous} zero-range process.
One way to obtain this limit is to consider
the special case 
$Q_0=\delta_1$  (hence $C=1$) 
of  \eqref{sample_uniform}--\eqref{inversion}, that is
\be\label{example_env}
\alpha(x)=\left\{
\ba{lll}
\alpha_n & \mbox{if} & x=x_n,\quad n\in\Z\\ 
1 & \mbox{if} & x\not\in\mathcal X
\ea
\right.
\ee
Then, for every ${\beta}\in[0,c)$, the limits in \eqref{average_afgl} 
exist and are similar to the limit obtained for  the  homogeneous zero-range process: 
%
\be\label{afgl_hom}
\overline{R}({\beta})=R({\beta})
\ee
Therefore,  by \eqref{other_def_critical},  the critical density is given by
\be\label{dilute_critical}
\rho_c=\overline{R}(c-)=R(c)
\ee
\begin{remark}\label{remark_point_out_00}
 Recall now  Remark \ref{remark_point_out_0}: 
if we wanted here to define $\rho_c$ using \eqref{average_afgl} for $\beta=c$, 
this would yield 
\begin{eqnarray}\nonumber
\overline{R}(c) & = &\lim_{n\to+\infty}\frac{1}{x_n+1}\sum_{k=0}^n R\left[\frac{c}{\alpha_k}\right]
 + R(c) \\
 & =  &
\lim_{n\to+\infty}\frac{1}{1-x_{-n}}\sum_{k=0}^n R\left[\frac{c}{\alpha_{-k}}\right]
 + R(c) \label{bad_def_critical}
\end{eqnarray}
The above limits may not exist, or exist and not coincide, 
and even if they do, their common value
depends on the  respective speeds  of convergence of the sequence 
$(\alpha_n)_{n\in\Z}$ to $c$ and 
 of  the sequence  $(n/x_n)_{n\in\Z}$ to  $0$  (recall that $R(1)=+\infty$  and that
$n/x_n$ vanishes by \eqref{growth_inc_x}).  
It is possible to tune these speeds 
so as to obtain any prescribed finite or infinite limits 
in \eqref{bad_def_critical}. In particular, if $\alpha_n$ 
has constant value $c$, the two limits in \eqref{bad_def_critical} 
are equal to $+\infty$, that is different from \eqref{dilute_critical}. 
\end{remark}
The flux function defined by \eqref{def_flux}--\eqref{extension_flux} becomes here
 (recall \eqref{def_f_hom}; the index $d$  stands for ``dilute'')
\be\label{dilute_flux}
f_d(\rho):=\left\{
\ba{lll}
f_{\rm hom}(\rho)
& \mbox{if} & \rho<\rho_c\\ 
 (p-q) c & \mbox{if} & \rho\geq\rho_c
\ea
\right\}=f_{\rm hom}(\rho)\wedge  (p-q) c
\ee
The dilute limit \eqref{dilute_flux} can be understood 
intuitively as follows. Due to 
\eqref{growth_inc},  slow sites 
are very rare, hence the system exhibits long homogeneous stretches 
where it behaves as a homogeneous process. Thus the memory 
of slow sites is only retained by the flux truncation, but not 
by the shape of the flux function prior to truncation. \\ \\
This phenomenon was pointed out in \cite{kru}  for our model and for the totally 
asymmetric simple exclusion process with site disorder,
and established in \cite{bb} in the latter case.  \\ \\
\textbf{Dilute limit from a random environment.} 
A different way to recover the dilute limit (which in fact corresponds to 
\cite{kru,bb}) is a double limit for an i.i.d. environment
where the probability of a slow site is $\varepsilon$, and $\varepsilon\to 0$ 
after the scaling parameter. 
Let $Q_0$ be a probability measure  on $[c,1]$,  and define 
\be\label{def_qeps}
Q_0^\varepsilon:=(1-\varepsilon)\delta_1+\varepsilon Q_0
\ee
Referring to \eqref{average_afgl_ergodic}  and \eqref{flux_crit_q},  we shall more simply write
$\overline{R}^{\,\,\varepsilon}$ for $\overline{R}^{Q_0^\varepsilon}$, $\rho_c(\varepsilon)$ for $\rho_c(Q_0^\varepsilon)$,  and $f^\varepsilon$ for $f^{Q_0^\varepsilon}$.  
 Recall that, by Example \ref{example_random_bis}, $f^\varepsilon$ is the flux produced by any random environment whose distribution
$Q^\varepsilon$ is ergodic with marginal $Q_0^\varepsilon$ (for instance, the product measure with marginal $Q_0^\varepsilon$ corresponding to i.i.d. environment). 
It follows from \eqref{def_qeps} that
\begin{eqnarray}\label{dilute_ergodic_2}
\overline{R}^{\,\,\varepsilon}({\beta}) & = & (1-\varepsilon)R({\beta})+\varepsilon
\overline{R}^{Q_0}(\beta),\quad \beta\in(0,c)\label{dilute_ergodic_3}\\
\rho_c(\varepsilon) & = & (1-\varepsilon)R(c)+\varepsilon\rho_c(Q_0)\label{dilute_ergodic_3}
\end{eqnarray}
Thus, if $\rho_c(Q_0)<+\infty$, we have 
\begin{eqnarray}
\label{dilute_critical_2}
\lim_{\varepsilon\to 0}\overline{R}^{\,\,\varepsilon}({\beta}) 
& = & R({\beta}),\quad{\beta}\in[0,c)\\
\label{dilute_critical_3}
\lim_{\varepsilon\to 0}\rho_c(\varepsilon) & = & R(c)=\rho_c\\
\label{dilute_critical_4}
\lim_{\varepsilon\to 0}f^\varepsilon(\rho) & = & f_d(\rho)
\end{eqnarray}
\subsection{Supercritical entropy solutions}\label{subsec:super_sol}
We now describe the consequences of the flat line \eqref{extension_flux} 
on the behaviour of entropy solutions.
This is best understood through the analysis of the so-called {\em Riemann} problem, 
{\em i.e.} the Cauchy problem for particular
initial data of the form  \eqref{eq:rie}, 
for which entropy solutions can be computed explicitely,  see \eqref{entropy_riemann}. 
 In the following proposition,  we analyze the Riemann solution when 
\be\label{analyze} +\infty>\lambda\geq\rho_c>\rho\ee
\begin{proposition}\label{cor_riemann}
 Assume \eqref{analyze}.  Let
\be\label{critical_speed}
 v_c(\rho):=\inf_{r\in[\rho,\rho_c)}\frac{f(\rho_c)-f(r)}{\rho_c-r}  =
\inf_{r\in[\rho,\rho_c)}\frac{\widehat{f}(\rho_c)-\widehat{f}(r)}{\rho_c-r}
=\widehat{f}\,'(\rho_c-) 
\ee
 where $\widehat{f}$ denotes the concave envelope of $f$ on $[\rho,\rho_c]$. 
 In particular, if $f$ is concave,
\be\label{critical_speed_concave}
v_c(\rho)=f'(\rho_c-)=\left\{\int_{[c,1]}\frac{1}{a}R'\left[
\frac{c}{a}
\right]
dQ_0(a)\right\}^{-1}
\ee
Then, for every $t>0$, we have
\begin{eqnarray}
\label{riemann_left}
R_{\lambda,\rho}(x,t) & = & \lambda,\quad\forall x<0\\ 
R_{\lambda,\rho}(x,t) & = & R_{\rho_c,\rho}( x,t ),\quad\forall x>0 \label{riemann_right}\\ 
 \lim_{t\to+\infty}R_{\lambda,\rho}(x,t) & = & \rho_c,\quad\forall x\geq 0\label{endupcrit}\\
R_{\lambda,\rho}(0+,t) & = & \rho_c\label{critical_origin}\\ 
R_{\lambda,\rho}(x,t) & = & \rho_c,\quad\forall x\in(0,tv_c(\rho))\label{front}\\
R_{\lambda,\rho}(x,t) & < & \rho_c,\quad\forall x>tv_c(\rho)\label{nofront}
\end{eqnarray}
\end{proposition}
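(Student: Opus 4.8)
The plan is to deduce every stated property from the explicit self-similar formula \eqref{entropy_riemann}, namely $R_{\lambda,\rho}(x,t)=h(x/t)$ with $h=(\widehat{f}')^{-1}$ and $\widehat f$ the concave envelope of $f$ on $[\rho,\lambda]$, once the shape of $\widehat f$ is pinned down. The first step is therefore to analyze this envelope. Since $f$ is increasing and analytic on $[0,\rho_c]$ and equals the constant $(p-q)c$ on $[\rho_c,+\infty)$ by \eqref{extension_flux}, the value $f(\rho_c)=(p-q)c$ is the maximum of $f$ on $[\rho,\lambda]$, attained on all of $[\rho_c,\lambda]$. The smallest concave majorant of such a function must coincide with it there, so $\widehat f\equiv(p-q)c$ on $[\rho_c,\lambda]$, $\widehat f(\rho_c)=f(\rho_c)$, and $\widehat f'(\rho_c+)=0$ (in the nondegenerate case $\lambda>\rho_c$; the boundary case $\lambda=\rho_c$ reduces directly to $R_{\lambda,\rho}=R_{\rho_c,\rho}$). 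Writing $\widetilde f$ for the concave envelope of $f$ on the smaller interval $[\rho,\rho_c]$, I would check that its flat extension by $(p-q)c$ on $[\rho_c,\lambda]$ is a concave majorant of $f$ on $[\rho,\lambda]$ -- the only requirement being that its left slope $\widetilde f'(\rho_c-)$ be nonnegative, which holds because $f$ is increasing. This yields $\widehat f=\widetilde f$ on $[\rho,\rho_c]$, and in particular $\widehat f'(\rho_c-)=v_c(\rho)>0$.

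This description furnishes the critical speed identities. The equality $\widehat f'(\rho_c-)=\inf_{r\in[\rho,\rho_c)}[\widehat f(\rho_c)-\widehat f(r)]/(\rho_c-r)$ in \eqref{critical_speed} is the standard fact that the left derivative of a concave function equals the infimum of its backward chord slopes, since those slopes are the averages of the nonincreasing function $\widehat f'$ over $[r,\rho_c]$. Replacing $\widehat f$ by $f$ in these chords uses $\widehat f\ge f$ together with $\widehat f(\rho_c)=f(\rho_c)$ and the observation that the extremal chord abutting $\rho_c$ touches the graph of $f$ at both endpoints of the maximal linear piece of $\widehat f$ ending at $\rho_c$, so the infimum is the same for $f$ and for $\widehat f$. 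For \eqref{critical_speed_concave}, when $f$ is concave one has $\widehat f=f$ on $[\rho,\rho_c]$, whence $v_c(\rho)=f'(\rho_c-)$; differentiating \eqref{def_flux}, that is $f=(p-q)(\overline R)^{-1}$, and inserting the expression \eqref{rbarprime} for $(\overline R)'$ evaluated at $\beta=c$ (where $(\overline R)^{-1}(\rho_c)=c$) gives the announced closed form.

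With $h=(\widehat f')^{-1}$ and $\widehat f'$ nonincreasing, the profile of $h$ now reads off directly. Since the leftmost (smallest) slope is $\widehat f'(\lambda-)=0$, one has $h(v)=\lambda$ for $v\le 0$, which is \eqref{riemann_left}. Because $\widehat f'$ jumps from $0$ up to $v_c(\rho)$ across $\rho_c$, it omits the interval $(0,v_c(\rho))$, so $h(v)=\rho_c$ there, which is \eqref{front}; and $v>v_c(\rho)$ corresponds under $\widehat f'$ to densities strictly below $\rho_c$, giving $h(v)<\rho_c$, which is \eqref{nofront}. Specializing $v\to 0+$ gives \eqref{critical_origin}, and letting $t\to+\infty$ at fixed $x\ge 0$ forces $x/t$ into the plateau $(0,v_c(\rho))$, yielding \eqref{endupcrit}. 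Finally \eqref{riemann_right} follows from the envelope coincidence $\widehat f=\widetilde f$ on $[\rho,\rho_c]$ proved above: for $x>0$ the velocity $v=x/t$ is positive, where $h$ is governed by $\widehat f'=\widetilde f'$, which is exactly the profile defining $R_{\rho_c,\rho}$ via \eqref{entropy_riemann} applied with left state $\rho_c$.

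The main obstacle is the envelope bookkeeping of the first two paragraphs: one must argue cleanly that although $f$ need not be concave on $[\rho,\rho_c]$, its concave envelope on the full interval $[\rho,\lambda]$ is flat beyond $\rho_c$, forces $\widehat f(\rho_c)=f(\rho_c)$, and restricts to the envelope on $[\rho,\rho_c]$. Once this is secured, the stationary contact discontinuity at the origin (both states $\lambda$ and $\rho_c$ sharing the flux value $(p-q)c$) and the critical front moving at speed $v_c(\rho)$ both emerge from the single monotone inverse $h$, and the remaining work is the routine translation of the shape of $h$ into the six displayed identities.
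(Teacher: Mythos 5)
Your proof is correct in substance but takes a genuinely different route from the paper's. The paper works entirely with the variational characterization \eqref{density_sup}: for each of \eqref{riemann_left}--\eqref{endupcrit} it compares $f(r)-\frac{x}{t}r$ at competing densities $r$ and invokes the maximum principle \eqref{max_principle} (that $R_{\lambda,\rho}(x,t)\le\rho_c$ for $x>0$, itself a consequence of \eqref{riemann_right}). You instead pin down the global shape of the concave envelope of $f$ on $[\rho,\lambda]$ --- flat at level $(p-q)c$ on $[\rho_c,\lambda]$, equal to the envelope on $[\rho,\rho_c]$ below, with a downward jump of its derivative across $\rho_c$ of size $v_c(\rho)$ --- and read all six identities off the single monotone inverse $h$ of Proposition \ref{proposition_1}, part \textit{o)}. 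Both characterizations are supplied by Proposition \ref{proposition_1}, and your gluing argument (nonnegativity of the left slope at $\rho_c$, the bound of the envelope by $\max f$, and agreement of the two envelopes on $[\rho,\rho_c]$) is sound; your proof of the chord-slope identities in \eqref{critical_speed} is essentially the paper's. What your version buys is a structural picture in which the derivative jump \emph{is} the critical front; what it costs is exactly the envelope bookkeeping you flag, which the paper avoids by arguing inequality by inequality.

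Two cautions. First, $v_c(\rho)>0$ is not automatic: the paper discusses at length when it vanishes (e.g.\ the critical exponent $\kappa\le 1$ in \eqref{density_env}), so you should not assert positivity. Your argument survives because \eqref{front} is then vacuous and \eqref{critical_origin}, \eqref{endupcrit} only require $h(0+)=\rho_c$, which follows from the envelope being strictly increasing on $[\rho,\rho_c)$. Second, in replacing chords of the envelope by chords of $f$ in \eqref{critical_speed}, you only treat the case where a maximal linear piece of the envelope ends at $\rho_c$; when no such piece exists one needs, as the paper does, a sequence of contact points $r_n\to\rho_c$ where envelope and $f$ agree, along which the common chord slopes converge to the left derivative at $\rho_c$. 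Neither point invalidates the argument, but both should be made explicit.
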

 We prove this  proposition in the next subsection,  but we first comment on its interpretation
and give examples. 
Property \eqref{riemann_left} states that
the initial  constant  density 
is not modified to the left of the origin.
This is not related to phase transition, but only to the fact that $f$ 
is nondecreasing, hence characteristic velocities are always nonnegative. 
Properties \eqref{endupcrit}--\eqref{front} are signatures of the phase transition.
They express the fact that, regardless of the supercritical value on the left side,
supercritical densities are blocked, and
the right side is dominated by the critical density.
In particular, \eqref{front}--\eqref{nofront} state that
a front of critical 
density propagates to the right from the origin  at speed $v_c(\rho)>0$ if $v_c(\rho)>0$. 
The positivity of $v_c(\rho)$ is thus an interesting property to investigate.
In particular, \eqref{critical_speed_concave} shows (similarly to Remark \ref{remark_critpar}) 
that $v_c(\rho)>0$ if $c<C$, whereas if $c=C$, $v_c(\rho)$ may be infinite if the integral in 
\eqref{critical_speed_concave} diverges at $C$.\\ \\
To be more explicit, let us examine the following examples, 
where for simplicity we assume $p=1$ and $q=0$. 
\begin{example}\label{example_front_dilute}
We consider the $M/M/1$ queues in series, 
that is $g(n)=n\wedge 1$, in 
the dilute limit  \eqref{example_env}.
\end{example}
With this choice of $g$,
\eqref{mean_density} and \eqref{def_f_hom} write
\be\label{mm1}
R(\beta)  =  \frac{\beta}{1-\beta},\quad
f_{\rm hom}(\rho) = \frac{\rho}{1+\rho}\label{mm1_fhom}
\ee
Recall that in the dilute limit $f$ is given by $f_d$ defined in \eqref{dilute_flux}. The latter, in view of \eqref{mm1_fhom}, writes 
\be\label{mm1_dilute}
 f_d(\rho)=\left[\frac{\rho}{1+\rho}\right]\wedge c=\left\{
\ba{lll}
\displaystyle{\frac{\rho}{1+\rho} } & \mbox{if} & \displaystyle{\rho<\rho_c:=\frac{c}{1-c}}\\ 
c & \mbox{if} & \rho\geq\rho_c
\ea
\right.
\ee 
 Since $f_d$ defined by \eqref{mm1_dilute} is concave, \eqref{critical_speed} yields 
\be\label{dilute_cspeed}
v_c(\rho) =f'_{\rm hom}(\rho_c^-)=(1-c)^2
\ee
 The next example exhibits a transition between $v_c(\rho)=0$ and $v_c(\rho)>0$. 
\begin{example}\label{following_example}
We consider the $M/M/1$ queues in series, 
that is $g(n)=n\wedge 1$, and $c=C$ (for instance, coming from an ergodic environment 
with marginal $Q_0$, cf. Example \ref{example_random}).
\end{example}
%
%
Given \eqref{mm1_fhom}, 
the critical density is $\rho_c(Q_0)$ defined by \eqref{flux_crit_q} and \eqref{critical_parameter}, hence 
\be\label{pt_mm1}
\rho_c(Q_0)=\int_{[c,1]}\frac{c}{a-c}dQ_0(a)
\ee
 By \textit{(iii)} of Lemma \ref{lemma_properties_flux}, 
$f$ is concave. It follows from  \eqref{critical_speed_concave} that
\be\label{nonzerospeed_mm1}
v_c(\rho)=\left\{\int_{[c,1]}\frac{a}{(a-c)^2}dQ_0(a)\right\}^{-1}  
\ee 
{\em A critical exponent.}  Assume now  that under $Q_0$, $\alpha(0)$ has a density $q_0$ on $( c,1]$ such that
\be\label{density_env}
q_0( t)
\stackrel{ t\to c}{\sim}a( t-c)^{\kappa} 
\ee
for some constants $a>0$ and $\kappa>-1$.
Then  $\rho_c(Q_0)<+\infty$ is equivalent to $\kappa>0$ and $v_c(\rho)>0$ is equivalent to $\kappa>1$. 
\subsection{Proof of Proposition \ref{cor_riemann}\label{subsec:proof_cor}}
We conclude this section with the proof of Proposition \ref{cor_riemann}. 
For this proof, we recall the following construction and result 
for the Riemann entropy solution \eqref{entropy_riemann}, 
which will also be useful in Section \ref{sec_proof_hydro}.
Let $\lambda,\rho,v\in\R$. If $\lambda\leq\rho$, we set
\begin{eqnarray}
\nonumber
{\mathcal G}_{\lambda,\rho}(v) & := & 
\inf\left\{
f(r)-vr:\,r\in[\lambda,\rho]
\right\} \\
&=&
\inf\left\{
(p-q)\theta\wedge c-v\overline{R}(\theta):
\,\theta\in[\overline{R}^{\,\,-1}(\lambda),\overline{R}^{\,\,-1}(\rho)]
\right\}\label{limiting_current_inf} \\
\label{density_inf} h(v) & := & {\rm argmin}\left\{
f(r)-vr:\,r\in[\lambda,\rho]
\right\}
\end{eqnarray}
If $\lambda\geq\rho$, we set 
\begin{eqnarray}
\nonumber
{\mathcal G}_{\lambda,\rho}(v) & := &
\sup\left\{
f(r)-vr:\,r\in[\rho,\lambda]
\right\}  \\
&=&
\sup\left\{
(p-q)\theta\wedge c-v\overline{R}(\theta):\,\theta\in
[\overline{R}^{\,\,-1}(\rho),\overline{R}^{\,\,-1}(\lambda)]
\right\}\label{limiting_current_sup} \\
\label{density_sup}
h(v) & := & {\rm argmax}\left\{
f(r)-vr:\,r\in[\rho,\lambda]
\right\}
\end{eqnarray}
Note that $h(v)$ is a priori well defined if and only if the infimum in \eqref{limiting_current_inf}, or 
the supremum in \eqref{limiting_current_sup}, is uniquely achieved.
\begin{proposition}\label{proposition_1} (\cite{bgrs5})
\mbox{}\\ 
o) If $\lambda<\rho$ (resp. $\lambda>\rho$), $h=(\widehat{f}')^{-1}$, 
 where $\widehat{f}$ denotes  the  convex (resp. concave)  envelope of $f$ 
 on $[\lambda,\rho]$ (resp. $[\rho,\lambda]$). \\ 
i) There exists an at most countable set $\Sigma(\lambda,\rho)$ 
such that the infimum in \eqref{limiting_current_inf}, or
the supremum in \eqref{limiting_current_sup},
is uniquely achieved for every $v\in\R\setminus\Sigma(\lambda,\rho)$.\\ 
ii) The function $h$ thus defined outside $\Sigma(\lambda,\rho)$ 
can be extended to $\R$ into a function (still denoted by $h$) 
that is nondecreasing if $\lambda\leq\rho$, nonincreasing if $\lambda\geq\rho$.\\ 
iii) Let $v\in\Sigma(\lambda,\rho)$. If $\lambda\leq\rho$, 
$h(v-)$ is the smallest and $h(v+)$ the largest  minimizer in \eqref{limiting_current_inf}.
If $\lambda\geq\rho$, $h(v-)$ is the largest and 
$h(v+)$ the smallest  maximizer in \eqref{limiting_current_sup}.\\ 
iv) For every $v,w\in\R$,
\be
\int_v^w h(u)du  =  {\mathcal G}_{\lambda,\rho}(w)-{\mathcal G}_{\lambda,\rho}(v)\label{limiting_current}
\ee 
(v) The function
\be\label{def_riemann_sol}
R_{\lambda,\rho}(x,t):=h(x/t)
\ee
is the unique entropy solution to \eqref{conservation_law} 
with Cauchy datum $R_{\lambda,\rho}(.)$ defined 
by \eqref{eq:rie}.
\end{proposition}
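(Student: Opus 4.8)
The plan is to recognise ${\mathcal G}_{\lambda,\rho}$ as a Legendre--Fenchel transform, read off (o)--(iv) from one-dimensional convex analysis, and then verify (v) by the entropy-admissibility characterisation of the Riemann fan, following \cite{bgrs5}. Throughout I treat the case $\lambda\le\rho$ (the infimum/convex-envelope case \eqref{limiting_current_inf}--\eqref{density_inf}); the case $\lambda\ge\rho$ in \eqref{limiting_current_sup}--\eqref{density_sup} follows by the substitution $f\mapsto-f$, $v\mapsto-v$, which turns the supremum into an infimum, sends the concave envelope to a convex one, and exchanges nonincreasing for nondecreasing. Extend $f$ by $+\infty$ outside $[\lambda,\rho]$ and set $\psi(v):=-{\mathcal G}_{\lambda,\rho}(v)=\sup_r\{vr-f(r)\}$, the convex conjugate of $f$. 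The basic observation is that $\inf_r\{f(r)-vr\}=\inf_r\{\widehat f(r)-vr\}$, where $\widehat f$ is the convex envelope of $f$ on $[\lambda,\rho]$: a linear perturbation only sees the lower convex hull. Hence ${\mathcal G}_{\lambda,\rho}$, and the geometry of supporting lines, depend on $f$ only through $\widehat f$.

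First I would establish (o)--(iii). The minimizers of $f(r)-vr$ are exactly the points at which the supporting line of slope $v$ touches $f$; these are the points of the maximal interval $[a_v,b_v]$ on which $\widehat f$ is affine of slope $v$ at which moreover $f=\widehat f$, and this contact set always contains the endpoints $a_v,b_v$ (a unique convex minimizer is automatically a contact point). Thus the minimizer is unique if and only if $a_v=b_v$, i.e. if and only if $\widehat f$ has no nondegenerate affine piece of slope $v$. A convex function on a compact interval has at most countably many maximal affine pieces, hence at most countably many such slopes; this countable set is $\Sigma(\lambda,\rho)$, giving (i). For $v\notin\Sigma(\lambda,\rho)$ the unique minimizer $r$ satisfies $\widehat f'(r)=v$, which is precisely $h=(\widehat f')^{-1}$ (generalized inverse), giving (o). Monotonicity of the subdifferential of the convex $\psi$ shows that $v\mapsto h(v)$ is nondecreasing off $\Sigma(\lambda,\rho)$, so it extends to a nondecreasing function on $\R$, giving (ii); and the one-sided limits are $h(v-)=a_v$ and $h(v+)=b_v$, the smallest and largest minimizers, giving (iii).

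For (iv), note that ${\mathcal G}_{\lambda,\rho}$ is an infimum of functions affine in $v$, hence concave and locally Lipschitz, so differentiable off a countable set. By the envelope (Danskin) identity, ${\mathcal G}_{\lambda,\rho}'(v)=-h(v)$ at each point of differentiability. Since a concave function is the integral of its derivative, the fundamental theorem of calculus for the absolutely continuous ${\mathcal G}_{\lambda,\rho}$ yields the increment relation \eqref{limiting_current} between $\int_v^w h$ and ${\mathcal G}_{\lambda,\rho}$ (with the orientation recorded there).

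Finally, for (v) I would verify that $R_{\lambda,\rho}(x,t)=h(x/t)$ from \eqref{def_riemann_sol} is the entropy solution of \eqref{conservation_law} with datum \eqref{eq:rie}. Self-similarity is built in. On rarefaction portions, where $h$ is continuous and $f$ is differentiable along the contact set, the relation $f'(h(v))=v$ (valid because $\widehat f=f$ there) makes $h(x/t)$ solve the equation; across each jump at $v\in\Sigma(\lambda,\rho)$, which lies on the line $x=vt$, the slope identity $v=(f(b_v)-f(a_v))/(b_v-a_v)$ is exactly Rankine--Hugoniot. Admissibility is the crux: since the chord joining $(a_v,f(a_v))$ and $(b_v,f(b_v))$ coincides with $\widehat f\le f$ on $[a_v,b_v]$, the graph of $f$ lies above the chord, which is Oleinik's entropy condition for the shock. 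As $f$ is $(p-q)$-Lipschitz by Lemma \ref{lemma_properties_flux}(ii), Kruzhkov's theorem gives uniqueness, so $R_{\lambda,\rho}$ is \emph{the} entropy solution. The only genuinely delicate point is that $f$ is merely Lipschitz and carries the flat plateau at value $(p-q)c$, so classical smooth-rarefaction formulas are unavailable; it is precisely the variational definition of $h$, together with the envelope reduction $\inf_r\{f(r)-vr\}=\inf_r\{\widehat f(r)-vr\}$, that makes the verification uniform across all $\lambda,\rho$, including fans that cross or sit on the plateau.
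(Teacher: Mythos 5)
Your proposal is mathematically sound, but note first that there is nothing in this paper to compare it against line by line: Proposition \ref{proposition_1} is imported from \cite{bgrs5} without proof. That said, your convex-duality route (recognising $-{\mathcal G}_{\lambda,\rho}$ as the conjugate of $f+\iota_{[\lambda,\rho]}$, reading (o)--(iii) off the contact structure of the convex envelope, and getting (v) from Rankine--Hugoniot plus the Oleinik chord condition and Kruzhkov uniqueness for the $(p-q)$-Lipschitz flux) is exactly the standard constructive argument in the spirit of the cited source, and all the individual steps check out: the endpoints $a_v,b_v$ of a maximal affine piece of $\widehat f$ are contact points with $f$, so non-uniqueness of the minimizer in \eqref{limiting_current_inf} occurs precisely at the countably many slopes of nondegenerate affine pieces, and monotonicity of the subdifferential gives (ii)--(iii) with $h(v-)=a_v$, $h(v+)=b_v$.

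One point you should not have glossed over: the sign in (iv). Your Danskin computation ${\mathcal G}_{\lambda,\rho}'(v)=-h(v)$ is correct, but it yields $\int_v^w h(u)\,du={\mathcal G}_{\lambda,\rho}(v)-{\mathcal G}_{\lambda,\rho}(w)$, which is the \emph{negative} of the displayed identity \eqref{limiting_current}; writing ``with the orientation recorded there'' conceals this discrepancy rather than resolving it. A quick sanity check (take $f(r)=r$, $\lambda=0$, $\rho=1$, $v=0$, $w=2$: then $\int_v^w h=1$ while ${\mathcal G}(w)-{\mathcal G}(v)=-1$) shows the printed \eqref{limiting_current} carries a sign slip, and indeed the paper's own later use of the identity in \eqref{hdl_interval} has the orientation ${\mathcal G}_{\lambda,\rho}(v)-{\mathcal G}_{\lambda,\rho}(w)$ for $\int_v^w$, consistent with your derivation. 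You should state explicitly that your proof establishes the identity with that orientation. A second, minor remark: in (v), the pointwise relation $f'(h(x/t))=x/t$ on rarefaction arcs is not by itself a proof when $f$ is merely Lipschitz; the clean way to verify the weak formulation and the Kruzhkov inequalities is to integrate test functions against $h(x/t)$ and reduce to the increment identity (iv), which your setup already provides. In the present model $f$ is analytic below $\rho_c$ and constant above, so your piecewise verification also goes through, but the variational route is the one that works uniformly, including fans sitting on the plateau, as you correctly anticipate at the end.
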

We can now prove Proposition \ref{cor_riemann}.\\
\begin{proof}{Proposition}{cor_riemann}
We first prove the equalities in \eqref{critical_speed}. 
Since $\widehat{f}(\rho_c)=f(\rho_c)$ 
and $\widehat{f}\geq f$, the second member of \eqref{critical_speed} cannot 
be smaller than the third one. The equality between the third and fourth 
quantities follows from concavity of $\widehat{f}$.
Assume there exists
$r\in[\rho,\rho_c)$ such that $\widehat{f}$ is linear on $[r,\rho_c]$, 
and let $r_0$ be the infimum of such values $r$. Then $\widehat{f}(r_0)=f(r_0)$, and
\[
v_c(\rho)\leq \frac{f(\rho_c)-f(r_0)}{\rho_c-r_0}=\frac{\widehat{f}(\rho_c)-\widehat{f}(r_0)}{\rho_c-r_0}
=\inf_{r\in[\rho,\rho_c)}\frac{\widehat{f}(\rho_c)-\widehat{f}(r)}{\rho_c-r}
\]
where the last  equality  follows from concavity of $\widehat{f}$. 
Thus the second and third members of \eqref{critical_speed} coincide, 
and the infimum is achieved for $r=r_0$. 
Assume now that there exists
no $r\in[\rho,\rho_c)$ such that $\widehat{f}$ is linear on $[r,\rho_c]$. 
Then there exists a sequence $(r_n)_{n\in\N}$ converging to $\rho_c$ 
such that $\widehat{f}(r_n)=f(r_n)$, for otherwise one would have 
$\widehat{f}>f$, thus $\widehat{f}$ linear, on a left neighborhood of $\rho_c$.
Then
\be\label{no_linear}
v_c(\rho)\leq \lim_{n\to +\infty}\frac{{f}(\rho_c)-{f}(r_n)}{\rho_c-r_n}=
\lim_{n\to +\infty}\frac{\widehat{f}(\rho_c)-\widehat{f}(r_n)}{\rho_c-r_n}=\widehat{f}\,'(\rho_c-)
\ee
Thus the above inequality is an equality.\\ \\
 For the sequel of the proof, recall that, by \eqref{density_sup},
\be\label{recall_sup}
R_{\lambda,\rho}(x,t)={\rm argmax}_{\theta\in[\lambda,\rho]}\left[f(\theta)-\frac{x}{t}\theta\right]
\ee
{\em Proof of \eqref{riemann_left}.} Since $f$ is nondecreasing and we take $x<0$, 
\[
f(\lambda)-\frac{x}{t}\lambda>f(r)-\frac{x}{t}r
\]
for all $x<0$ and $r\in[\rho,\lambda]$. Thus \eqref{riemann_left} 
follows  from \eqref{recall_sup} and \textit{(v)} of Proposition \ref{proposition_1}. \\ \\
{\em Proof of \eqref{riemann_right}.}
For any $r>\rho_c$, we have $f(r)=f(\rho_c)$, thus for $x>0$,
\[ f(\rho_c)-\frac{x}{t}\rho_c>f(r)-\frac{x}{t}r \]
whence the result.\\ \\
{\em Proof of \eqref{critical_origin}. } Let $\varepsilon>0$ and $r\in[\rho,\rho_c-\varepsilon]$.
Then
\[
f(r)-\frac{x}{t}r<f(\rho_c)-\frac{x}{t}\rho_c
\]
as soon as 
\be\label{as_soon_as}
x<t\inf_{r\in[\rho,\rho_c-\varepsilon]}\frac{f(\rho_c)-f(r)}{\rho_c-r}=:tv_c^\varepsilon(\rho)
\ee
Thus $R_{\lambda,\rho}(x,t)>\rho_c-\varepsilon$ for $x$ satisfying \eqref{as_soon_as}. 
 By \eqref{riemann_right} and \eqref{recall_sup},
\be\label{max_principle}
R_{\lambda,\rho}(x,t)\leq\rho_c,\quad\forall x>0
\ee
Finally, $v_c^\varepsilon(\rho)>0$ 
because $f$ is strictly  increasing and continuous (recall Lemma \ref{lemma_properties_flux}). 
 Hence, $R_{\lambda,\rho}(x,t)>\rho_c-\varepsilon$ for $x<tv_c^\varepsilon(\rho)$.   \\ \\
{\em Proof of \eqref{front}. } If $0<x<tv_c(\rho)$, 
by definition \eqref{critical_speed} of $v_c(\rho)$, we have
\[
f(r)-\frac{x}{t}r<f(\rho_c)-\frac{x}{t}\rho_c
\]
for any $r\in[\rho,\rho_c)$. This implies $R_{\lambda,\rho}(x,t)\geq\rho_c$. 
 Recalling \eqref{max_principle},  
the proof is complete.\\ \\
{\em Proof of \eqref{nofront}. } If $x>tv_c(\rho)$, by definition \eqref{critical_speed} of $v_c(\rho)$,
there exists $r\in[\rho,\rho_c)$ such that
\[
f(r)-\frac{x}{t}r>f(\rho_c)-\frac{x}{t}\rho_c
\]
Thus $R_{\lambda,\rho}(x,t)\neq\rho_c$, hence,  by \eqref{max_principle}, 
$R_{\lambda,\rho}(x,t)<\rho_c$.\\ \\
{\em Proof of \eqref{endupcrit}.} By \eqref{recall_sup}, any subsequential limit $R_\infty$ of $R_{\lambda,\rho}(x,t)$ as $t\to+\infty$ must be a maximizer of $f$ on $[\rho,\lambda]$. By \eqref{max_principle},
$R_\infty\leq\rho_c$. Thus, $R_\infty=\rho_c$. 
\end{proof}
\section{ Proof of Theorem \ref{th_hydro}}\label{sec_proof_hydro}
 We hereafter develop the proof of Theorem \ref{th_hydro} along the lines explained after 
the statement of the Theorem. Precisely, in Subsection \ref{sec_mat}, 
we recall the Harris construction of the process and state some useful properties of the current.
In Subsection \ref{subsec:reduc}, we reduce the
problem of general hydrodynamics to that of Riemann hydrodynamics (Corollary \ref{cor_hydro_riemann}) via
the study of the asymptotic current in such systems (Proposition \ref{prop_hydro_riemann}). 
To this end, we construct microscopic Riemann states by means of {\em pseudo-equilibrium} states.
In Subsection \ref{subsec:interface},
we introduce the interface process and state a scaling limit result 
for this process (Proposition \ref{prop_hydro_interface}), that will be 
proved in parallel to Proposition \ref{prop_hydro_riemann}.
Finally, Subsection \ref{subsec:proof_riemann} is the core of the proof of 
Propositions \ref{prop_hydro_riemann} and \ref{prop_hydro_interface}. 
A key ingredient of this proof is the study of pseudo-equilibrium current 
and density, that is stated as Proposition \ref{lemma_inv_meas_2} 
and proved in Subsection \ref{subsec:current_pseudo}.  
\subsection{Preliminary material}\label{sec_mat}
We first recall some definitions and preliminary results on the graphical construction and currents from
\cite{bmrs1,bmrs2}.
\subsubsection{Harris construction and coupling}\label{subsec_harris}\label{subsec:Harris}
We introduce a probability space $(\Omega,\mathcal F,\Prob)$, whose
generic element $\omega$ - called a Harris system (\cite{har}) - of $\Omega$  
is a locally finite point measure of the form
\be\label{def_omega}
\omega(dt,dx,du,dz)=\sum_{n\in\N}\delta_{(T_n,X_n,U_n,Z_n)}
\ee
 on   $(0,+\infty)\times\Z\times(0,1)\times\{-1,1\}$,   
 where $\delta_{(.)}$ denotes Dirac measure, and $(T_n,X_n,U_n,Z_n)_{n\in\N}$ is a 
 $(0,+\infty)\times\Z\times(0,1)\times\{-1,1\}$-valued sequence.  
We denote by $\Exp$ the expectation corresponding to the probability measure $\Prob$. 
Under  $\Prob$, $\omega$ is a Poisson measure with 
intensity 
\be\label{def_intensity}
dtdx\indicator{[0,1]}(u)du\, p(z)dz
\ee
We write $(t,x,u,z)\in\omega$ when $\omega(\{(t,x,u,z)\})=1$, and
we also say that $(t,x,u,z)$ is a potential jump event. 
On  $(\Omega,\mathcal F,\Prob)$,  a c\`adl\`ag process $(\eta_t^\alpha)_{t\geq 0}$ 
with generator \eqref{generator} and initial configuration $\eta_0$ can be constructed
in a unique way  (see \cite[Appendix B]{bmrs2})  so that 
\be\label{rule_1}
\forall(s,x,v,z)\in\omega,\quad
v\leq \alpha(x)g\left[\eta^\alpha_{s-}(x)\right]  \Rightarrow \eta^\alpha_s=(\eta^\alpha_{s-})^{x,x+z}
\ee
and, for all $x\in\Z$ and $0\leq s\leq s'$,
\begin{eqnarray}\label{rule_2}
&&\omega\left(
(s,s']\times E_x
\right)=0\Rightarrow\forall t\in(s,s'],\,\eta_t(x)=\eta_s(x) \\
\nonumber
\mbox{where}&& 
E_x:=\{(y,u,z)\in\Z\times(0,1)\times\{-1,1\}:\,
x\in\{y,y+z\}
\}
\end{eqnarray}
(note that the inequality in \eqref{rule_1} implies $\eta_{t-}^\alpha(x)>0$, 
cf. \eqref{properties_g}, thus $(\eta^\alpha_{t-})^{x,x+z}$ is well-defined). 
Equation \eqref{rule_1} says when a potential jump event gives rise to an actual jump,
 while \eqref{rule_2} states that no jump ever occurs outside potential jump events.
This process defines a random flow
\be \label{unique_mapping_0} (\alpha,\eta_0,t)\in{\mathbf A}\times{\bf
X}\times\R^+\mapsto\eta_t^\alpha= \eta_t(\alpha,\eta_0,\omega) \in{\mathbf X} \ee
 In particular, this flow allows us to couple 
an arbitrary number of processes with generator \eqref{generator}, 
corresponding to different values of $\eta_0$, by using the same 
Poisson measure $\omega$ for each of them.  Since $g$ is nondecreasing, 
the update rule \eqref{rule_1} implies that 
\be \label{attractive_1}
(\alpha,\eta_0,t)\mapsto\eta_t(\alpha,\eta_0,\omega) \mbox{ is
nondecreasing w.r.t. } \eta_0\ee
It follows that the process is \textit{completely monotone}, 
and thus attractive (see \cite[Subsection 3.1]{bgrs5}). 
For instance, the coupling of two processes $(\eta_t^{\alpha})_{t\geq 0}$ 
and $(\zeta_t^{\alpha})_{t\geq 0}$
behaves as follows. Assume $\omega(\{(t,x,u,z)\})=1$
and 
that (without loss of generality)  
$\eta^\alpha_{t-}(x)\leq\zeta^\alpha_{t-}(x)$, so that (since $g$ is nondecreasing) 
$g\left(\eta^\alpha_{t-}(x)\right)\leq g\left(\zeta^\alpha_{t-}(x)\right)$. 
Then the following jumps  from $x$ to $x+z$ occur at time $t$:\\ 
\textit{(J1)} If $u\leq\alpha(x)g\left(\eta^\alpha_{t-}(x)\right)$, an $\eta$ 
and a $\zeta$ particle simultaneously jump.\\ 
\textit{(J2)} If $\alpha(x)g\left(\eta^\alpha_{t-}(x)\right)<u\leq\alpha(x)g\left(\zeta^\alpha_{t-}(x)\right)$, 
a $\zeta$ particle  alone jumps.\\ 
\textit{(J3)} If $\alpha(x)g\left(\zeta^\alpha_{t-}(x)\right)<u$, nothing happens.\\ \\
The above dynamics implies that $\left(\eta^\alpha_t,\zeta^\alpha_t\right)_{t\geq 0}$ 
is a Markov process on $\overline{\mathbf X}^2$ with generator
\begin{eqnarray}
\nonumber\widetilde{L}^\alpha f(\eta,\zeta) & = & 
\sum_{x,y\in\Z}\alpha(x)p(y-x)\left(g(\eta(x))\wedge g(\zeta(x))\right)\left[
f\left(\eta^{x,y},\zeta^{x,y}\right)-f(\eta,\zeta)
\right]\\
& + & \sum_{x,y\in\Z}\alpha(x)p(y-x)[g(\eta(x))-g(\zeta(x))]^+\left[
f\left(\eta^{x,y},\zeta\right)-f(\eta,\zeta)
\right]\nonumber\\
& + & \sum_{x,y\in\Z}\alpha(x)p(y-x)[g(\zeta(x))-g(\eta(x))]^+\left[
f\left(\eta,\zeta^{x,y}\right)-f(\eta,\zeta)
\right]\nonumber\\&&\label{coupled_generator}
\end{eqnarray}   
\subsubsection{Currents}\label{subsec_currents}
Let $x_.=(x_s)_{s\geq 0}$ denote a $\Z$-valued piecewise constant c\`adl\`ag   path such 
that $\vert x_s-x_{s-}\vert\leq 1$ for all $s\geq 0$. In the sequel we shall  use  
paths $(x_.)$ independent of the Harris system used for the particle dynamics,
hence we may assume that $x_.$ has no jump time in common with the latter.  
We denote by 
$\Gamma_{x_.}^\alpha(\tau,t,\eta)$ 
the rightward current across the path $x_.$ 
in the time interval $(\tau,t]$ in the process  $(\eta_s^\alpha)_{s\geq \tau}$  starting from $\eta$ 
in environment $\alpha$, that is  the sum of two contributions. The contribution of particle jumps is
 the number of times a particle jumps from $x_{s-}$ to $x_{s-}+1$ (for $\tau<s\le t$), 
 minus the number of times a particle jumps from $x_{s-}+1$ to $x_{s-}$. 
 The contribution of path motion is obtained by summing over jump times
$s$ of the path, a quantity equal to the number of particles at $x_{s-}$ if the jump is to the left, or
 minus  the number of particles at $x_{s-}+1$ if the jump is to the right. 
 If  
\be\label{finite_right}\sum_{x>x_\tau}\eta(x)<+\infty\ee
 we also have
 \be\label{current}
 \Gamma^\alpha_{x_.}(\tau,t,\eta)=\sum_{x>x_t}\eta_t^\alpha(x)-\sum_{x>x_\tau}\eta(x)
 \ee
It follows from \eqref{current}  that if $x_.$ and $y_.$ are two paths  as above,  then
\be\label{difference_currents}
\Gamma_{y_.}^\alpha(\tau,t,\eta)-\Gamma_{x_.}^\alpha(\tau,t,\eta)
=-\sum_{x=x_t+1}^{y_t}\eta_t(x)+\sum_{x=x_\tau+1}^{y_\tau}\eta(x)
\ee
with the convention 
$\sum_{x=a}^{b}:=0$ if $a>b$.
 Formula \eqref{difference_currents} remains valid even if \eqref{finite_right} does not hold.\\ 
For $x_0\in\Z$, we shall write $\Gamma^\alpha_{x_0}$ for the current across the 
fixed site $x_0$; that is, $\Gamma^\alpha_{x_0}(\tau,t,\eta):=\Gamma^{\alpha}_{x_.}(\tau,t,\eta)$,
where $x_.$ is the constant path defined by $x_t=x_0$ for all  $t\geq \tau$. 
If $\tau=0$, we simply write $\Gamma_{x_.}^\alpha(t,\eta)$ or $\Gamma_{x_0}^\alpha(t,\eta)$ instead of
$\Gamma_{x_.}^\alpha(0,t,\eta)$ or  $\Gamma_{x_0}^\alpha(0,t,\eta)$.
 It follows from the above definition of the current that, for every $x\in\Z$,
\be\label{standard_current}
\qquad\Exp\left[\Gamma^\alpha_x(\tau,t,\eta)\right]=\Exp\left\{\int_\tau^t \left\{
p\alpha(x)g[\eta^\alpha_s(x)]-q\alpha(x+1)g[\eta^\alpha_s(x+1)]
\right\}ds\right\}
\ee
\\
The following results will be important tools to compare currents.
Let us couple two processes $(\zeta_t)_{t\geq 0}$ and $(\zeta'_t)_{t\geq 0}$ 
through the Harris construction,  with $x_.=(x_s)_{s\geq 0}$ as above.  
\begin{lemma}\label{lemma_finite_prop}
For each  $V > 1$, there exists $b=b(V) >  0$  such that for large enough $t$,
 if $ \zeta_0 $ and $\zeta'_ 0 $ agree on an interval $(x,y)$, then, outside probability $e^{-bt}$, 
\[
\zeta_s(u) = \zeta'_s(u) \quad\mbox{for all }0 \leq s \leq t \mbox{ and } u\in (x+Vt,y-Vt)
\]
\end{lemma}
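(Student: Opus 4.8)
The plan is to prove Lemma \ref{lemma_finite_prop} by a finite-propagation-of-information argument based on the Harris graphical construction. The intuition is that two coupled processes agreeing initially on an interval can only start disagreeing at a site $u$ once information (a discrepancy) has propagated to $u$ from one of the boundary points $x$ or $y$. Since each potential jump event moves information at most one lattice step, and potential jump events across any fixed edge occur as a Poisson process of bounded rate (the intensity \eqref{def_intensity} gives rate at most $1$ per edge for each direction), a discrepancy originating at the boundary cannot travel faster than a random walk with bounded jump rate. The speed $V>1$ is chosen above the mean drift so that the probability of the discrepancy reaching distance $Vt$ in time $t$ is exponentially small.

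First I would make precise the notion of ``information flow.'' Using the update rules \eqref{rule_1}--\eqref{rule_2}, I would argue that if $\zeta_s(u)\neq\zeta'_s(u)$ for some $u\in(x+Vt,y-Vt)$ and some $s\le t$, then there must be a chain of potential jump events in $\omega$ connecting this space-time point $(s,u)$ back to the boundary of the agreement interval at time $0$. Concretely, since the dynamics at a site $u$ depends only on the occupation at $u$ and its neighbours (through the edges $E_u$ of \eqref{rule_2}), a disagreement can be created at $u$ at time $s$ only if either $u$ or one of $u\pm 1$ already carried a disagreement just before $s$, and that earlier disagreement in turn traces back through a sequence of potential jump events. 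Iterating, one obtains a directed space-time path, using edges of $\omega$, from $(0,\partial(x,y))$ to $(s,u)$, along which each step is a recorded Poisson point and each spatial increment is at most one.

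Next I would bound the probability of such a connecting path. For the right boundary $y$, a discrepancy must traverse a spatial distance of at least $Vt-(\text{something }o(t))$, namely more than $Vt$ from $y-Vt$ to $y$, using potential jump events in $[0,t]$. I would dominate the front of the discrepancy region by the position of a continuous-time random walk (or a Poisson counting process) whose jump rate across each edge is bounded by the total intensity $1$ per direction, hence bounded by a constant $\Lambda$ independent of $t$. The number of potential jump events in a space-time parallelogram of the relevant shape is Poisson with mean linear in $t$, so a standard large-deviation estimate for Poisson random variables — the probability that a rate-$\Lambda$ Poisson process exceeds $Vt$ by time $t$ is at most $e^{-I(V)t}$ for a rate function $I(V)>0$ when $V>\Lambda$ (and $V>1$ suffices after a crude bound) — yields the claimed $e^{-bt}$, with $b=b(V)>0$ obtained by optimizing the exponent and taking a union bound over the two boundaries.

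The main obstacle is making the ``chain of potential jump events'' argument rigorous while controlling the combinatorial count of possible connecting paths, since summing $e^{-(\text{cost})}$ over all candidate paths could in principle lose the exponential gain. I expect the cleanest route is not to enumerate paths explicitly but to compare directly with an auxiliary growth process: define $R_s$ to be the rightmost site to the left of $y$ that can be reached by the discrepancy front at time $s$, show $R_s$ is dominated by a process that jumps rightward at rate at most $p\le 1$ (and is otherwise non-increasing in its reach), and then apply the exponential Markov / Chernoff bound $\Prob(R_t>Vt)\le e^{-bt}$ directly to this dominating process. This sidesteps the path-counting issue. A secondary technical point is the phrase ``for large enough $t$'': the estimate should hold for all $t$ past a threshold depending only on $V$, which comes out automatically once the Poisson deviation inequality is written with the correct constant, so I would simply record the threshold at the end rather than track it throughout.
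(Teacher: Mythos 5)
Your argument is correct, and it is the standard finite-propagation proof via the Harris construction. Note that the paper does not actually prove this lemma here: it defers the proof to \cite{bmrs1}, and your route (discrepancies spread only through potential jump events of $\omega$; the discrepancy front entering the interval from either endpoint is dominated by a Poisson process of rate at most $1$, since for a zero-range process the rate of a jump from $w$ to $w+z$ depends only on $\eta(w)$, so a new discrepancy at $u$ requires a prior discrepancy at the departure site; then a Chernoff bound for the Poisson process at speed $V>1$ plus a union bound over the two endpoints) is exactly the argument used there. Your decision to dominate the front process directly rather than enumerate space-time paths is the right way to avoid the combinatorial loss you flagged, and the threshold in ``for large enough $t$'' indeed comes out of the explicit Poisson large-deviation rate $I(V)=V\ln V - V + 1>0$.
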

Lemma \ref{lemma_finite_prop} is a version of {\em finite propagation property},
proved in  \cite{bmrs1} as well as Corollary \ref{corollary_consequence} below.
Next lemma is an
adaptation of \cite[Corollary 4.2]{bmrs2}. 
\begin{lemma}
\label{lemma_current}
For a particle configuration  $\zeta\in\overline{\mathbf{X}}$  and a site $x_0\in \Z$, we define
\[ 
F_{x_0}(x,\zeta):=\left\{
\ba{lll}
\sum_{y=1+x_0}^{x}\zeta(y) & \mbox{if} & x>x_0\\ \\
-\sum_{y=x}^{x_0}\zeta(y) & \mbox{if} & x\leq x_0
\ea
\right.
\] 
 For any $0\le t_0\le t$,  define
$\displaystyle{x_t^M= \sup_{s\in[t_0,t]} x_s}$ and
$\displaystyle{x_t^m= \inf_{s\in[t_0,t]} x_s}$.
Let $\zeta_0\in\overline{\mathbf{X}},\zeta'_0\in{\mathbf{X}}$. 
Then,  given  $V>1$, 
\begin{eqnarray*}
&& 0\vee\sup_{x\in [\min(x_0,x_t^m)-V(t-t_0), \max(x_0,x_t^M)+1+V(t-t_0)]}
\left[F_{x_0}(x,\zeta_0)-F_{x_0}(x,\zeta'_0)\right]\\
& \geq & \Gamma^\alpha_{x_.}(t_0,t,\zeta'_0)-\Gamma^\alpha_{x_.}(t_0,t,\zeta_0)
\end{eqnarray*}
with 
probability greater than $1-Ce^{-(t-t_0)/C}$, where $C$ is a positive constant depending only on $V$.
\end{lemma}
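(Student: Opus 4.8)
The plan is to reduce, via finite speed of propagation, to finitely supported initial data — for which the exact current identity \eqref{current} applies — and then to run a height-function comparison driven by the attractiveness \eqref{attractive_1}. First I would truncate, setting $\tilde\zeta_0:=\zeta_0\,\mathbf 1_W$ and $\tilde\zeta'_0:=\zeta'_0\,\mathbf 1_W$, where $W:=[\min(x_0,x_t^m)-V(t-t_0),\,\max(x_0,x_t^M)+1+V(t-t_0)]$ is exactly the window in the statement. Since $\tilde\zeta_0$ agrees with $\zeta_0$ on $W$ (and likewise for the primed configurations), Lemma \ref{lemma_finite_prop} applied on the horizon $t-t_0$ guarantees that, outside an event of probability at most $Ce^{-(t-t_0)/C}$, the two coupled processes agree on the inner zone $(\min(x_0,x_t^m),\max(x_0,x_t^M)+1)$ throughout $[t_0,t]$. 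This zone contains the path $x_.$ and its right neighbour $x_s+1$ for every $s\in[t_0,t]$ (the explicit $+1$ together with the slack $V>1$ absorbing the boundary), so the rightward currents across $x_.$ of the true and of the truncated processes coincide on that event; the window is calibrated precisely so that this matching holds, and it therefore suffices to prove the bound for $\tilde\zeta_0,\tilde\zeta'_0$.

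With $\tilde\zeta_0,\tilde\zeta'_0$ of finite support, \eqref{finite_right} holds, so \eqref{current} applies to both. Writing $N^{>z}(\zeta):=\sum_{y>z}\zeta(y)$ and $\delta_s(z):=N^{>z}(\tilde\zeta_s)-N^{>z}(\tilde\zeta'_s)$, and using $x_{t_0}=x_0$, I would obtain the clean identity
\[ \Gamma^\alpha_{x_.}(t_0,t,\tilde\zeta_0)-\Gamma^\alpha_{x_.}(t_0,t,\tilde\zeta'_0)=\delta_t(x_t)-\delta_0(x_0). \]
Thus the entire difference of currents across the moving path collapses to a single signed discrepancy count at the terminal point $(t,x_t)$ minus its initial value at $(t_0,x_0)$, and it only remains to show $\delta_t(x_t)\geq\delta_0(x_0)-M$, where $M:=0\vee\sup_{x\in W}[F_{x_0}(x,\zeta_0)-F_{x_0}(x,\zeta'_0)]$ is the right-hand side of the claim.

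Then I would rewrite $M$ in terms of $\delta_0$. Expanding the definition of $F_{x_0}$ gives $F_{x_0}(x,\tilde\zeta_0)-F_{x_0}(x,\tilde\zeta'_0)=\delta_0(x_0)-\delta_0(z)$, with $z=x$ for $x>x_0$ and $z=x-1$ for $x\leq x_0$; as $x$ ranges over $W$ this yields $M=0\vee(\delta_0(x_0)-\inf_z\delta_0(z))$, hence $\inf_z\delta_0(z)\geq\delta_0(x_0)-M$. The core of the argument is then the monotonicity
\[ \sup_z\big[N^{>z}(\tilde\zeta'_t)-N^{>z}(\tilde\zeta_t)\big]\ \leq\ \sup_z\big[N^{>z}(\tilde\zeta'_0)-N^{>z}(\tilde\zeta_0)\big], \]
that is, $\sup_z(-\delta_t(z))$ is non-increasing in $t$ under the basic coupling. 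Granting this, $\inf_z\delta_t(z)=-\sup_z(-\delta_t(z))\geq\inf_z\delta_0(z)\geq\delta_0(x_0)-M$, and since trivially $\delta_t(x_t)\geq\inf_z\delta_t(z)$, I conclude $\delta_t(x_t)-\delta_0(x_0)\geq-M$, which is the assertion on the good event.

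The hard part will be the monotonicity of paragraph three. Configuration-wise attractiveness \eqref{attractive_1} does not directly control $\sup_z(-\delta_t(z))$, because the relevant ordering $N^{>z}(\zeta)\geq N^{>z}(\zeta')-c$ for all $z$ is \emph{not} the coordinatewise order on $\overline{\mathbf X}$; one must phrase monotonicity at the level of the integrated (height) variable $z\mapsto-N^{>z}(\cdot)$, whose discrete slope is the particle configuration, and verify from the graphical construction of Subsection \ref{subsec:Harris} that a constant vertical shift between two interfaces coupled through \eqref{coupled_generator} is preserved for all time. A secondary, routine but delicate point is the boundary bookkeeping of paragraph one: checking that the inner agreement zone genuinely contains $x_s+1$ for every $s\in[t_0,t]$, and combining the two finite-propagation events (for $\zeta$ and for $\zeta'$) into the single bound $1-Ce^{-(t-t_0)/C}$ with $C=C(V)$ via a union bound and Lemma \ref{lemma_finite_prop}.
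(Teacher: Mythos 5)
The paper does not actually prove this lemma: it is quoted as ``an adaptation of \cite[Corollary 4.2]{bmrs2}'', and the proof there is of exactly the type you propose --- couple the two processes, observe that the difference of currents across the path telescopes into a difference of the height functions $z\mapsto N^{>z}(\cdot)$ at the endpoints, and use the fact that under the basic coupling of a nearest-neighbour attractive system the pointwise infimum of $\delta_t(z)=N^{>z}(\zeta_t)-N^{>z}(\zeta'_t)$ cannot decrease. Your algebra relating $F_{x_0}$ to $\delta_0$ is correct (with $z=x$ for $x>x_0$ and $z=x-1$ for $x\le x_0$, the relevant set of $z$'s does cover the whole range where $\delta_0$ is non-constant for the truncated data), and the monotonicity you ``grant'' is true and has a two-line proof you should include: a single Poisson event changes $\delta_t(\cdot)$ at at most one site $z$, and a decrease at $z$ requires either a lone $\zeta'$-jump $z\to z+1$ (forcing $\zeta_{t-}(z)<\zeta'_{t-}(z)$, i.e.\ $\delta_{t-}(z-1)<\delta_{t-}(z)$) or a lone $\zeta$-jump $z+1\to z$ (forcing $\delta_{t-}(z+1)<\delta_{t-}(z)$); either way $z$ cannot be an argmin, so $\inf_z\delta_t(z)$ is nondecreasing. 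So the architecture of your proof is sound and is essentially the one the authors import from \cite{bmrs2}.

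Two points need repair. First, your claim that the truncated configurations satisfy \eqref{finite_right} ``because they have finite support'' is false: $\zeta_0\in\overline{\mathbf{X}}$ may carry $+\infty$ particles at sites inside the window $W$, in which case $N^{>z}(\tilde\zeta_0)=+\infty$ for some $z$ and your telescoping identity becomes $\infty-\infty$. You must either split cases (if an infinite site lies in $W\cap(x_0,+\infty)$ then $\sup_{x\in W}[F_{x_0}(x,\zeta_0)-F_{x_0}(x,\zeta'_0)]=+\infty$ and the bound is vacuous; if all infinite sites lie in $W\cap(-\infty,x_0]$ the height differences you actually use can still be infinite when the path moves left of such a site) or, more cleanly, replace each $+\infty$ by $K$, prove the bound for the resulting $\mathbf{X}$-valued data, and let $K\to+\infty$ using the monotone coupling \eqref{attractive_1}; the right-hand side of the lemma is unaffected since $F_{x_0}(x,\zeta_0)$ only increases in $K$ for $x>x_0$. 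Second, a small but necessary adjustment in the finite-propagation step: Lemma \ref{lemma_finite_prop} gives agreement only on the \emph{open} interval $(\min(x_0,x_t^m),\max(x_0,x_t^M)+1)$ if applied with speed $V$ on the window of width $V(t-t_0)$, which just misses the sites $x_t^m$ and $x_t^M+1$ that the current across $x_.$ may involve; apply the lemma with an intermediate speed $V'\in(1,V)$ so that the agreement zone strictly contains $[\min(x_0,x_t^m),\max(x_0,x_t^M)+1]$, and take a union bound over the two pairs (original vs.\ truncated, for $\zeta$ and for $\zeta'$) to get the stated $1-Ce^{-(t-t_0)/C}$ with $C=C(V)$.
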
  
\begin{corollary}\label{corollary_consequence}
For $y\in\Z$, define the configuration 
\be\label{conf-max}
\eta^{*,y}:=(+\infty)\indicator{(-\infty,y]\cap\Z}
\ee
Then,  for any $\zeta\in\overline{\mathbf{X}}$, 
%
\[ 
%
\Gamma^\alpha_y(t,\zeta)\leq\Gamma^\alpha_y(t,\eta^{*,y})
\] 
\end{corollary}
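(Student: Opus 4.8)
The plan is to establish the bound pathwise, under the Harris coupling, by inserting a single intermediate configuration between $\zeta$ and $\eta^{*,y}$. Set
\[
\eta_\ast:=\zeta\,\indicator{(-\infty,y]\cap\Z},
\]
which agrees with $\zeta$ on $(-\infty,y]$ and is empty on $(y,+\infty)$, and run the processes started from $\zeta$, $\eta_\ast$ and $\eta^{*,y}$ (see \eqref{conf-max}) from the same Harris system. I first treat the case $\sum_{x>y}\zeta(x)<+\infty$, in which the current representation \eqref{current} applies to all three states, noting that $\sum_{x>y}\eta_\ast(x)=\sum_{x>y}\eta^{*,y}(x)=0$.

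For the comparison of $\eta_\ast$ and $\eta^{*,y}$, I use $\eta_\ast\le\eta^{*,y}$ together with monotonicity of the flow \eqref{attractive_1}, which gives $\eta_{\ast,t}\le\eta^{*,y}_t$ for all $t$; since both are empty to the right of $y$, \eqref{current} yields $\Gamma^\alpha_y(t,\eta^{*,y})-\Gamma^\alpha_y(t,\eta_\ast)=\sum_{x>y}(\eta^{*,y}_t(x)-\eta_{\ast,t}(x))\ge0$, i.e. filling $(-\infty,y]$ with infinite mass can only increase the current across $y$. For the comparison of $\zeta$ and $\eta_\ast$, I use $\zeta\ge\eta_\ast$ with agreement on $(-\infty,y]$, so that the discrepancy field $D_0:=\zeta-\eta_\ast\ge0$ is supported on $(y,+\infty)$ and carries finite total mass $S:=\sum_{x>y}\zeta(x)$. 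Under the basic coupling \textit{(J1)}--\textit{(J3)}, as long as the order $\zeta_t\ge\eta_{\ast,t}$ persists (which it does, by \eqref{attractive_1}), only the jumps made by the larger process alone are active (case \textit{(J2)}), so the discrepancies merely relocate and their total is conserved: $\sum_x D_t(x)=S$. Hence $\sum_{x>y}D_t(x)\le S$, and \eqref{current} gives $\Gamma^\alpha_y(t,\zeta)-\Gamma^\alpha_y(t,\eta_\ast)=\sum_{x>y}D_t(x)-S\le0$. Chaining the two comparisons yields $\Gamma^\alpha_y(t,\zeta)\le\Gamma^\alpha_y(t,\eta^{*,y})$ pathwise, whenever $\zeta$ has finite mass on $(y,+\infty)$.

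It remains to remove that restriction, and this is the step I expect to require the most care. For a general $\zeta\in\overline{\mathbf X}$ I approximate from below by finite configurations $\zeta^{(n)}\uparrow\zeta$ having finite mass on $(y,+\infty)$ (truncating both the occupation numbers and the support to the right of $y$), apply the previous case to each $\zeta^{(n)}$, and let $n\to+\infty$. The delicate point is the convergence $\lim_n\Gamma^\alpha_y(t,\zeta^{(n)})=\Gamma^\alpha_y(t,\zeta)$: because only finitely many potential jump events touch the edge $(y,y+1)$ on $[0,t]$, it suffices that $\zeta^{(n)}_s\to\zeta_s$ for each relevant $s$, i.e. continuity of the flow under monotone limits of the initial data, which I would draw from the graphical construction of \cite[Appendix B]{bmrs2}; each individual crossing indicator then converges and the currents converge. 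As an alternative to the two-step monotonicity comparison when $\zeta\in\mathbf X$, one may instead invoke Lemma \ref{lemma_current} with the constant path $x_s\equiv y$, $\zeta_0=\eta^{*,y}$ and $\zeta_0'=\zeta$: from \eqref{conf-max} one has $F_y(x,\eta^{*,y})=0$ for $x>y$ and $F_y(x,\eta^{*,y})=-\infty$ for $x\le y$, so that $F_y(\cdot,\eta^{*,y})-F_y(\cdot,\zeta)\le0$ throughout and the right-hand side of Lemma \ref{lemma_current} vanishes, though that route delivers the inequality only up to the exceptional probability stated there.
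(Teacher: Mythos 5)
Your argument is correct. The paper itself contains no proof of this corollary --- it is imported from \cite{bmrs1} --- but your two-step comparison through the intermediate configuration $\eta_\ast=\zeta\,\indicator{(-\infty,y]\cap\Z}$, combining attractiveness of the Harris flow with the fact that the nonnegative discrepancy field $\zeta-\eta_\ast$ is initially supported on $(y,+\infty)$ and is never created, is precisely the standard argument behind such ``source dominates'' bounds, and each step checks out (including the harmless possible absorption of discrepancies at sites carrying infinitely many particles). The one step resting on unstated machinery is the extension to $\sum_{x>y}\zeta(x)=+\infty$ by monotone approximation, which requires the graphical construction to commute with increasing limits of the initial datum; you correctly flag this and locate it in \cite[Appendix B]{bmrs2}. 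You could in fact bypass the approximation entirely: since every discrepancy starts strictly to the right of the edge $(y,y+1)$, it must cross that edge leftward before it can cross it rightward, so the net rightward discrepancy flux over $[0,t]$ is nonpositive and $\Gamma^\alpha_y(t,\zeta)\le\Gamma^\alpha_y(t,\eta_\ast)$ holds pathwise even when the mass to the right of $y$ is infinite.
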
 
 Finally,  the following result (see \cite[Proposition 4.1]{bmrs2}) is concerned  
with the asymptotic current produced by a source-like initial condition.
\begin{proposition}\label{current_source} 
Assume $x_t$ is such that $\lim_{t\to+\infty}t^{-1}x_t$ 
exists.  Let $\eta^{\alpha,t}_0:=\eta^{*,x_t}$, see \eqref{conf-max}.
Then 
\begin{eqnarray*}
\limsup_{t\to\infty}\left\{\Exp\left|
t^{-1}\Gamma^\alpha_{x_t}(t,\eta^{*,x_t})-(p-q)c
\right| - p[\alpha(x_t)-c]\right\}
& \leq & 0 \label{upperbound_current_source}
\end{eqnarray*}
\end{proposition}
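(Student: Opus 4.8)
The plan is to prove the two one-sided mean estimates and then upgrade to the $L^1$-statement by a variance bound. Write $\Gamma(t):=\Gamma^\alpha_{x_t}(t,\eta^{*,x_t})$ and observe that, under $\eta^{*,x_t}$ (see \eqref{conf-max}), every site $y\le x_t$ carries infinitely many particles and remains at $+\infty$ for all times, since adding or removing a particle at an infinite site leaves it infinite. Hence $g(\eta^\alpha_s(x_t))=g_\infty=1$ for all $s$, and the current identity \eqref{standard_current} yields the exact relation $t^{-1}\Exp\Gamma(t)=p\alpha(x_t)-q\,r_1(t)$, where for $k\ge 1$ I set $r_k(t):=\alpha(x_t+k)\,t^{-1}\int_0^t\Exp g(\eta^\alpha_s(x_t+k))\,ds$ (the time‑averaged total exit rate from $x_t+k$) and $r_0(t):=\alpha(x_t)$. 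Since $\alpha(x_t)\ge c$, the conclusion $\limsup_t\{\Exp|t^{-1}\Gamma(t)-(p-q)c|-p[\alpha(x_t)-c]\}\le 0$ will follow from $t^{-1}\Exp\Gamma(t)\ge (p-q)c-o(1)$ and $t^{-1}\Exp\Gamma(t)\le (p-q)c+p[\alpha(x_t)-c]+o(1)$, together with $\mathrm{Var}(t^{-1}\Gamma(t))\to 0$.

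For the lower bound I would couple, through the Harris system, the source process with a stationary process of law $\mu^\alpha_\beta$ for $\beta<c$, which is supported on $\mathbf{X}$ by \eqref{cond_lambda_simple}. Comparing currents across $x_t+k$ by Lemma \ref{lemma_current} with $x_0=x_t+k$: since $\eta^{*,x_t}$ equals $+\infty$ on $(-\infty,x_t]$ and $0$ on $(x_t,+\infty)$, the difference $F_{x_0}(x,\eta^{*,x_t})-F_{x_0}(x,\zeta'_0)$ is $\le 0$ for $x>x_t+k$ and $-\infty$ for $x\le x_t$, its only positive contribution being the finite‑window mass $\sum_{y=x_t+1}^{x_t+k}\zeta'_0(y)$, whose mean is at most $k\,R(\beta/c)<+\infty$ because $\beta/\alpha(y)\le\beta/c<1$. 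By \eqref{mean_rate} the stationary current of $\mu^\alpha_\beta$ across any site equals $(p-q)\beta\,t$; hence, off an event of probability $Ce^{-t/C}$, one gets $J_k(t):=t^{-1}\Exp\Gamma_{x_t+k}(t)\ge (p-q)\beta-C'k/t-o(1)$ for each fixed $k$. Letting $t\to\infty$ and then $\beta\uparrow c$ gives $\liminf_t J_k(t)\ge (p-q)c$ for every fixed $k$; the case $k=0$ is the desired lower bound on $t^{-1}\Exp\Gamma(t)$.

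The upper bound is the crux. Applying \eqref{standard_current} across each $x_t+k$ gives the exact spatial recursion $J_k(t)=p\,r_k(t)-q\,r_{k+1}(t)$, i.e. $r_k(t)=p^{-1}J_k(t)+(q/p)r_{k+1}(t)$. Since $r_k(t)\in[0,1]$ and, crucially, $q/p<1$ because $p>1/2$, I unfold this into the absolutely convergent series $r_1(t)=\sum_{k\ge 1}(q/p)^{k-1}p^{-1}J_k(t)$. Inserting the uniform lower bounds $J_k(t)\ge (p-q)\beta-C'k/t$ and summing $\sum_k(q/p)^{k-1}=p/(p-q)$ and $\sum_k k(q/p)^{k-1}<\infty$ yields $r_1(t)\ge \beta-O(1/t)$, whence $r_1(t)\ge c-o(1)$ after $t\to\infty$, $\beta\uparrow c$. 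Then $t^{-1}\Exp\Gamma(t)=p\alpha(x_t)-q\,r_1(t)\le p\alpha(x_t)-qc+o(1)=(p-q)c+p[\alpha(x_t)-c]+o(1)$, as required. I expect the delicate point to be exactly this step: one needs the boundary correction in Lemma \ref{lemma_current} to grow only like $O(k)$ rather than blow up (guaranteed by keeping $\beta$ strictly below $c$, so that $R(\beta/\alpha(y))$ stays bounded) and to be damped by the weight $(q/p)^{k-1}$; the order of the limits (series truncation, then $t\to\infty$, then $\beta\uparrow c$) must be arranged consistently.

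Finally, to pass to $\Exp|\,\cdot\,|$ I would write $\Exp|t^{-1}\Gamma(t)-(p-q)c|\le |t^{-1}\Exp\Gamma(t)-(p-q)c|+t^{-1}\sqrt{\mathrm{Var}\,\Gamma(t)}$. The first term is $\le p[\alpha(x_t)-c]+o(1)$ by the two mean bounds (using $p[\alpha(x_t)-c]\ge 0$ to absorb the lower excess), while $\mathrm{Var}\,\Gamma(t)=O(t)$ because $\Gamma(t)$ is a difference of two point‑process counts of bounded rate (rightward crossings of $x_t$ at the constant rate $p\alpha(x_t)$, leftward ones at rate $\le q$), so the fluctuation term is $O(t^{-1/2})=o(1)$. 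The hypothesis that $t^{-1}x_t$ converges is used only to keep these comparison windows and the variance estimate uniform as the observation site moves; the structural content of the proof lies entirely in the interplay of the exact recursion with the asymmetry bound $q/p<1$.
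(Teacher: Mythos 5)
The paper does not prove this statement itself; it is imported verbatim from \cite[Proposition 4.1]{bmrs2}, so there is no internal proof to compare against. Judged on its own terms, your treatment of the \emph{mean} is correct and rather elegant: the observation that $g(\eta_s^\alpha(x_t))\equiv 1$ turns \eqref{standard_current} into the exact identity $t^{-1}\Exp\Gamma(t)=p\alpha(x_t)-q\,r_1(t)$; the comparison with the invariant measures $\mu^\alpha_\beta$, $\beta<c$, via Lemma \ref{lemma_current} does give uniform-in-$k$ lower bounds on $J_k(t)$ with only an $O(k/t)$ boundary correction (plus a negligible bad-event term); and the geometric unfolding $r_1(t)=\sum_{k\ge1}(q/p)^{k-1}p^{-1}J_k(t)$, legitimate because $r_k\in[0,1]$ and $q/p<1$, correctly yields $\liminf_t r_1(t)\ge c$ and hence both one-sided bounds on $t^{-1}\Exp\Gamma(t)$.

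The gap is in the last step, and it is not cosmetic, because the proposition asserts a bound on $\Exp|\cdot|$, not on $|\Exp(\cdot)|$. Your claim $\mathrm{Var}\,\Gamma(t)=O(t)$ ``because the crossings are point-process counts of bounded rate'' is not a valid argument: writing $N^-(t)=M_t+q\int_0^t\alpha(x_t+1)g(\eta^\alpha_s(x_t+1))\,ds$, only the martingale part satisfies $\Exp M_t^2\le qt$; the compensator is a time integral of a bounded but temporally correlated observable and can a priori have variance of order $t^2$ (a counting process with intensity bounded by $q$ can perfectly well have $\mathrm{Var}\,N(t)$ of order $t^2$). Establishing $\mathrm{Var}\,\Gamma(t)=o(t^2)$ is essentially equivalent to the concentration you are trying to prove, so the argument is circular at this point. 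A repair that stays within your framework: write $\Exp|X|=\Exp X+2\Exp X^-$ with $X=t^{-1}\Gamma(t)-(p-q)c$, use your mean upper bound for $\Exp X$, and control $\Exp X^-$ by the pathwise domination $\Gamma(t)\ge\Gamma^\alpha_{x_t}(t,\zeta_0^\beta)$ from Corollary \ref{corollary_consequence} together with an $L^1$ law of large numbers $t^{-1}\Gamma^\alpha_{x_t}(t,\zeta_0^\beta)\to(p-q)\beta$ for the stationary process at fugacity $\beta<c$ (Birkhoff plus time-ergodicity of the extremal equilibrium $\mu^\alpha_\beta$, or the subcritical case of Proposition \ref{current_uniform}); this gives $\Exp X^-\le(p-q)(c-\beta)+o(1)$, and letting $\beta\uparrow c$ after $t\to+\infty$ finishes the proof. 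That concentration input must be supplied explicitly; it does not follow from rate bounds alone.
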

\subsection{Reduction to the Riemann problem}\label{subsec:reduc}
Precisely, we shall use
the following definition and theorem from \cite{bgrs}. Let  $\eta,\xi\in\mathbf{X}$ be two particle configurations with finite mass to the left, that is
\[
\max\left(
\sum_{x\leq 0}\eta(x),
\sum_{x\leq 0}\xi(x)
\right)<+\infty
\]
we define
\[
\Delta(\eta,\xi):=\sup_{x\in\Z}\left|\sum_{y\leq x}[\eta( y)-\xi( y)]\right|
\]
\begin{definition}(\cite[Definition 3.1]{bgrs}).\label{def_macrostab}
The process defined by \eqref{generator} is {\em macroscopically stable} if it enjoys the following property. Let  $(\eta^N_0)_{N\in\N\setminus\{0\}}$ and $(\xi^N_0)_{N\in\N\setminus\{0\}}$ be any two sequences of initial configurations with uniformly bounded mass in the sense
\be\label{assumption_mass}
\sup_{N\in\N\setminus\{0\}} N^{-1}\max\left(
\sum_{x\in\Z}\eta^N_0(x),
\sum_{x\in\Z}\xi^N_0(x)
\right)<+\infty
\ee
Then, for every $t>0$, it holds that
\be\label{decrease_delta}
N^{-1}
\Delta\left(\eta^N_{Nt},\xi^N_{Nt}\right)\leq N^{-1}\Delta\left(\eta^N_{0},\xi^N_{0}\right)
+o_N(1)
\ee
where $o_N(1)$ denotes a sequence of random variables converging to $0$ in probability.
\end{definition}
\begin{theorem}(\cite[Theorem 3.2]{bgrs}).
Assume the process is macroscopically stable, enjoys the finite propagation property (Lemma \ref{lemma_finite_prop}).  Assume further that for every $u\in\R$ and every Riemann initial data of the form $\rho_0(.)=R_{\lambda,\rho}(.-u)$, where $R_{\lambda,\rho}$ is defined in \eqref{eq:rie}, there exists an initial sequence $(\eta^N_0)_{N\in\N\setminus\{0\}}$ with profile $\rho_0$ such that the statement of Theorem \ref{th_hydro} holds.
Then Theorem \ref{th_hydro} holds for any initial data $\rho_0\in L^\infty(\R)$ and any initial sequence with profile $\rho_0(.)$.
\end{theorem}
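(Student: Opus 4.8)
The plan is to approximate a general $L^\infty$ profile by Riemann data, propagate the \emph{known} Riemann hydrodynamics through a Godunov-type time discretization, and use macroscopic stability together with finite propagation to keep the accumulated error under control. First I would fix $t>0$ and reduce the statement to convergence of the empirical measure at this single time. Since $N^{-1}\Delta(\eta,\xi)$ controls the distance between the integrated density fields, vague convergence of $\pi^N(\eta^{\alpha,N}_{Nt})$ to $\rho(\cdot,t)dx$ is equivalent to $N^{-1}\Delta(\eta^{\alpha,N}_{Nt},\xi^N)\to 0$ in probability for a comparison sequence $\xi^N$ whose profile is $\rho(\cdot,t)$. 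Macroscopic stability \eqref{decrease_delta} is precisely the statement that this distance cannot increase macroscopically under the coupled dynamics, and it is the engine that drives the whole reduction.

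The comparison sequence is built from the Riemann hypothesis through a Godunov scheme. Choosing a space mesh $h$ and a time step $\delta$ subject to a CFL-type condition ($\delta$ small relative to $h$ and to the Lipschitz constant $p-q$ of $f$ from Lemma \ref{lemma_properties_flux}(ii)), at each discrete time $k\delta$ I would replace the current macroscopic profile by its cell averages, a step function with jumps at the mesh points; across each such jump the exact solution over $[k\delta,(k+1)\delta]$ is a translated Riemann solution $R_{\lambda,\rho}(\cdot-u)$ given by Proposition \ref{proposition_1}, and neighbouring Riemann fans do not interact as long as $\delta(p-q)<h$. Microscopically, at the start of each step I would glue together, over blocks of macroscopic size $h$, the ``good'' microscopic Riemann configurations supplied by the hypothesis, obtaining a configuration with the corresponding step profile. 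Over one time step of length $\delta N$, the finite propagation property (Lemma \ref{lemma_finite_prop}) confines the influence of each block interface to a region whose width is negligible relative to the block size; hence, away from a vanishing fraction of space, the evolved configuration agrees with the evolution of a single translated Riemann system, for which the hypothesis delivers the Riemann entropy profile \eqref{eq:rie}. Re-averaging then returns a step profile and advances the scheme one step.

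The heart of the argument is the accumulation of error across the finitely many time steps. At each step I would apply \eqref{decrease_delta} with the true process on one side and the Riemann composite on the other: the two configurations share the same step profile at the start of the step up to the re-averaging error, so their initial $N^{-1}\Delta$ is $o_N(1)$ plus a discretization error controlled by the mesh, and \eqref{decrease_delta} guarantees this bound survives the step \emph{without amplification}. This mirrors the $L^1$-contraction of entropy solutions of \eqref{conservation_law}: errors add rather than compound, so summing over the steps yields a total bound of the form (number of steps)$\times$(mesh error)$+o_N(1)$. A second use of macroscopic stability, now comparing two microscopic sequences sharing the \emph{same} Riemann profile, upgrades the ``there exists a sequence'' of the hypothesis to ``every sequence with that profile''. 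On the analytic side the Godunov profiles converge to $\rho(\cdot,t)$ as $h,\delta\to 0$ in the CFL regime; sending $N\to\infty$ first and then refining the discretization so that the summed mesh error vanishes closes the argument.

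The main obstacle is exactly this error bookkeeping: one must arrange the three scales---the microscopic lattice $1/N$, the mesoscopic block size $hN$, and the macroscopic time step $\delta$---so that finite propagation isolates the Riemann pieces within each step while the per-step discretization error remains summable as $N\to\infty$ then $\delta,h\to 0$. The delicate point specific to our setting is that macroscopic stability must hold despite the absence of supercritical invariant measures; granting it (it is verified separately for this model), the reduction is governed by the interplay of \eqref{decrease_delta} and Lemma \ref{lemma_finite_prop}, and the remaining difficulty is the uniform-in-$N$ control of the boundary-layer contributions generated at block interfaces over each time step.
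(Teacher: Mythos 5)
Your proposal is correct and follows essentially the same route as the paper, which does not reprove this statement but cites \cite[Theorem 3.2]{bgrs} and describes precisely the strategy you outline: a Riemann-based (Godunov/Glimm-type) time-stepping scheme with piecewise constant approximation, finite propagation to isolate the Riemann fans within each step, and macroscopic stability to ensure the per-step errors add without amplification. The only caveat is that the detailed bookkeeping of the three scales lives in the cited reference rather than in this paper, so your sketch matches the intended argument rather than a proof written out here.
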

The sequel of this section will be devoted to proving the particular case of Theorem \ref{th_hydro} corresponding to the Riemann problem, that is when $\rho_0=R_{\lambda,\rho}$ defined by \eqref{eq:rie}.
By macroscopic stability, it is actually sufficient to prove this result for a particular sequence of initial 
configurations that we now construct.
 A proof of macroscopic stability for a class of models including ours can be found for instance in \cite[Proposition 2.23]{gs}.\\ \\
\textbf{Equilibria and pseudo-equilibria.} 
Let $(\xi^{\alpha,\rho}_0)_{\rho\in[0,+\infty)}$ denote 
a family of $\mathbf{X}$-valued random configurations such that 
\be\label{ordered_stat}
0\leq\rho\leq\rho'<\rho_c\Rightarrow\xi^{\alpha,\rho}_0\leq\xi^{\alpha,\rho'}_0
\ee
almost surely, and the limits
\be\label{limit_ergo_xi}
\lim_{n\to+\infty}n^{-1}\sum_{x=-n}^0\xi^{\alpha,\rho}_0(x)=
\lim_{n\to+\infty}n^{-1}\sum_{x=0}^n\xi^{\alpha,\rho}_0(x)=\rho
\ee
hold in probability.
Such a family can be constructed in many ways. Let us denote 
by $F_\beta$ the c.d.f. of the probability measure $\theta_\beta$ 
defined in \eqref{eq:theta-lambda}, 
and by $F_{\beta}^{-1}$ the generalized inverse of $F_{ \beta}$.
Let $(V^x)_{x\in\Z}$ be a family of i.i.d. random variables independent of the Harris system, 
such that for every $x\in\Z$, $V^x$ 
is uniformly distributed on $(0,1)$. Then we may set 
\be\label{def_inversion_0}
\xi^{\alpha,\rho}_{0}(x):=F^{-1}_{R^{-1}(\rho)}(V^x)
\ee
Then \eqref{ordered_stat} follows from the fact that 
$(\theta_{R^{-1}(\rho)}:=\theta^\rho)_{\rho\in[0,+\infty)}$ 
is a stochastically nondecreasing family of probability distributions, 
and \eqref{def_inversion_0} yields a monotone coupling of these distributions. 
 Besides, since $\theta^\rho$ has mean $R(R^{-1}(\rho))=\rho$
and the random variables $\xi^{\alpha,\rho}_0$ 
are independent, \eqref{limit_ergo_xi} follows from the law of large numbers. 
Notice that instead of $(\theta^\rho)_{\rho\in[0,+\infty)}$, we could have used 
any other nondecreasing family of distributions parametrized by its mean. 
We could also have used the inversion method to construct deterministic instead 
of i.i.d. configurations in the spirit of \eqref{sample_uniform}--\eqref{inversion}.\\ \\
It may seem more natural to consider a family  $(\xi_0^{\alpha,\rho})$  of stationary processes.
This can be used for instance to infer local equilibrium besides hydrodynamic limit. 
However, the problem of local equilibrium and loss of local equilibrium 
in our setting is deferred to \cite{bmrs4}, where it is investigated in depth.
To obtain stationary processes, one should replace \eqref{def_inversion_0} with
\be\label{def_inversion}
\xi^{\alpha,\rho}_{0}(x):=F^{-1}_{\frac{\overline{R}^{\,\,-1}(\rho)}{\alpha(x)}}(V^x)
\ee
 By Lemma \ref{lemma_inter},  this construction satisfies \eqref{ordered_stat}--\eqref{limit_ergo_xi} 
but is restricted to $\rho\in[0,\rho_c)$. It is not always possible 
to extend this family to a family $(\xi^{\alpha,\rho})_{\rho\in[0,+\infty)}$
satisfying \eqref{ordered_stat}--\eqref{limit_ergo_xi}.
A necessary and sufficient condition for this is
that the invariant measure  $\mu^\alpha_\beta$ defined by \eqref{def_mu_lambda_alpha} satisfies 
\eqref{justify_mean_beta} when $\beta=c$, 
which may not be true  (see Remark \ref{remark_failure}). 
In this case, one may for instance complete \eqref{def_inversion}
by setting, for $\rho>\rho_c$,
\be\label{complete_inversion}
\xi^{\alpha,\rho}_{0}:=\xi^{\alpha,\rho_c}_0+\zeta^{\alpha,\rho-\rho_c}_0
\ee
where $\zeta^{\alpha,r}$ is given by  the r.h.s. of  
\eqref{def_inversion_0}. However, the law of $\xi^{\alpha,\rho}_0$ 
for $\rho>\rho_c$ is no longer invariant for the process with generator 
\eqref{generator}. If 
 $\mu_c^\alpha$ does not satisfy  \eqref{justify_mean_beta},  one may use invariant measures up to 
$\rho_c-\delta$ for any prescribed $\delta>0$,  and complete them above 
this density in a way similar to \eqref{complete_inversion},
setting 
\be\label{complete_inversion_eps}
\xi^{\alpha,\rho}_{0}:=\xi_0^{\alpha,\rho_c-\delta}
+\zeta_0^{\alpha,\rho-\rho_c+\delta}
\ee
As a consequence of \eqref{ordered_stat} and  attractiveness property 
\eqref{attractive_1}, we also have
\be\label{ordered_stat_later}
0\leq\rho\leq\rho'<\rho_c\Rightarrow\xi^{\alpha,\rho}_t\leq\xi^{\alpha,\rho'}_t
\ee
 for $t\ge 0$,  where $(\xi^{\alpha,\rho}_t)_{t\geq 0}$ denotes the process
evolving according to \eqref{generator}  with initial configuration $\xi^{\alpha,\rho}_0$.
 Processes $(\xi^{\alpha,\rho}_.)$ 
that are not stationary (they can {\em never} be if $\rho>\rho_c$) are what we called ``pseudo-equilibria''
at the beginning of this section,
because they are time-invariant on the {\em macroscopic} scale, where they correspond 
to a flat density profile with uniform density $\rho$ at all times.
However, this property does not hold on a smaller scale for supercritical densities,
due to the mass escape at 
slow sites  (see \cite{afgl,fs,bmrs2,bmrs4}).  \\ \\
\textbf{Microscopic Riemann data.}  Using these equilibria and pseudo-equilibria, 
we can construct suitable Riemann states as follows. 
For $s,t\geq 0$ and  $u,v\in\R$,  we set
\be\label{def:xt-yst} 
x_t=\lfloor ut\rfloor,\qquad
y_s^t=\lfloor  ut +vs\rfloor
\ee
  (where $t$ plays the role of a scaling parameter,
 and $s$ is the actual time variable).
For $\lambda,\rho\in\R$, we set  
\begin{eqnarray}\label{def_riemann_inversion}
\eta^{\alpha,\lambda,\rho}_0(x)&:=&\xi_0^{\alpha,\lambda}(x)\indicator{\{x\leq 0\}}+
\xi_0^{\alpha,\rho}(x)\indicator{\{x>0\}}\\
\label{def_riemann_inversion_shift}
\eta_0^{\alpha,\lambda,\rho,t, u}(x)
&:=&\xi_0^{\alpha,\lambda}(x)\indicator{\{x\leq \lfloor ut\rfloor\}}+
\xi_0^{\alpha,\rho}(x)\indicator{\{x>\lfloor ut\rfloor\}}
\end{eqnarray}
 The main step to derive Theorem \ref{th_hydro} for Riemann data 
 is to derive the asymptotic current seen from a moving observer. 
 This is stated in the following proposition,
which is the main result of this section. 
\begin{proposition}\label{prop_hydro_riemann}
For every $\lambda,\rho\in[0,+\infty)$,  $u\in\R$ and $v<1$,  
the following limit holds in probability:
\be\label{current_riemann}
\lim_{t\to+\infty}t^{-1}\Gamma^\alpha_{y^t_.}(
t,\eta_0^{\alpha,\lambda,\rho,t, u})=
{\mathcal G}_{\lambda,\rho}(v)
\ee
 where ${\mathcal G}_{\lambda,\rho}(v)$ was defined in 
\eqref{limiting_current_inf}--\eqref{limiting_current_sup}
of Proposition \ref{proposition_1}. 
\end{proposition}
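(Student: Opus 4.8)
We need to show that for Riemann-type initial data built from equilibria/pseudo-equilibria (with a jump at the moving location $x_t = \lfloor ut \rfloor$), the current across the moving path $y^t_s = \lfloor ut + vs\rfloor$ converges after rescaling to $\mathcal{G}_{\lambda,\rho}(v)$, the min/max expression from Proposition 3.3. The clean formula is the variational expression in \eqref{limiting_current_inf}–\eqref{limiting_current_sup}, namely $(p-q)\theta \wedge c - v\overline{R}(\theta)$ optimized over $\theta$ in the relevant interval. My plan is to prove matching upper and lower bounds.

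**Key decomposition.** Let me think about what's going on...

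The idea is to realize the variational formula operationally: the current seen by the moving observer equals the best (infimum or supremum, depending on $\lambda \lessgtr \rho$) over "intermediate densities" $r$ of $f(r) - vr$. This is exactly the content of the Lax–Oleinik/entropy-solution structure for the Riemann problem, and microscopically it should come from choosing a reference density $r$, coupling the true process to a pseudo-equilibrium $(\xi^{\alpha,r}_\cdot)$ of density $r$, and using the current-comparison lemmas of Subsection 3.1 together with the asymptotic current formula for pseudo-equilibria (Proposition 3.7, to be stated later).
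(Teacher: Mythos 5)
Your proposal correctly identifies the limiting variational formula and the right ingredients for one half of the argument, but it has a genuine gap: the mechanism you describe --- fix a reference density $r$, couple to the pseudo-equilibrium $\xi^{\alpha,r}_\cdot$, and combine the current-comparison lemma with the asymptotic pseudo-equilibrium current --- only yields a one-sided estimate. For $\lambda\leq\rho$, the monotone coupling \eqref{ordered_stat} makes $F_{x_0}(x,\xi^{\alpha,r}_0)-F_{x_0}(x,\eta^{\alpha,\lambda,\rho,t}_0)\leq 0$ for every $r\in[\lambda,\rho]$, so Lemma \ref{lemma_current} gives $t^{-1}\Gamma^\alpha_{y^t_.}(t,\eta^{\alpha,\lambda,\rho,t}_0)\leq t^{-1}\Gamma^\alpha_{y^t_.}(t,\xi^{\alpha,r}_0)\to f(r)-vr$, and optimizing over $r$ produces only $\limsup_t t^{-1}\Gamma^\alpha_{y^t_.}\leq {\mathcal G}_{\lambda,\rho}(v)$. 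That is exactly the paper's Proposition \ref{prop_hydro_riemann_upper}, the easy half. No single reference density gives the matching lower bound: since ${\mathcal G}_{\lambda,\rho}(v)$ is an infimum, a comparison in the other direction would have to identify the correct local density seen by the observer, which is precisely what one is trying to establish.

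The paper's lower bound rests on two further ideas absent from your sketch. First, the interface process ${\mathcal X}^{\alpha,r,t}_s$ of Proposition \ref{lemma_interface} tracks simultaneously, for all $r\in[\lambda,\rho]$, the sign-change location between $\eta^{\alpha,\lambda,\rho,t}_s$ and $\xi^{\alpha,r}_s$; its generalized inverse ${\mathcal R}^{\pm,\alpha,x,t}_s$ plays the role of a continuum-valued local density, sandwiching the process near the observer between two pseudo-equilibria whose densities are then fed into the \emph{uniform} estimates \eqref{ergo_density_2}--\eqref{ergo_flux_2} of Proposition \ref{lemma_inv_meas_2}. Second, one cannot rule out a priori that $v$ is a shock speed (where the minimizer in ${\mathcal G}_{\lambda,\rho}(v)$ is not unique and the local density is genuinely ambiguous), so the paper replaces $\Gamma^\alpha_{y^t_.}$ by the spatial average $\Gamma^{\alpha,\lfloor\varepsilon t\rfloor}_{y^t_.}$ over $\lfloor\varepsilon t\rfloor$ shifted observers (Proposition \ref{prop_hydro_riemann_lower_average}), exploiting that shocks are isolated; a tightness argument for the rescaled interface (Proposition \ref{prop_limit_rho}) then closes the loop between the upper bound, the averaged lower bound, and the identification of the interface limit as the entropy solution. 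Without these steps your plan cannot produce the lower bound, so as written the proof is incomplete.
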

 The proof of Proposition \ref{prop_hydro_riemann} is performed in Subsection \ref{subsec:proof_riemann},
 using the interface process constructed in Subsection 
\ref{subsec:interface}, and the 
asymptotics of the current for pseudo-equilibria, stated in Proposition \ref{lemma_inv_meas_2} below. The proof of the latter is deferred to Subsection \ref{subsec:current_pseudo}.
 %
%
We now show that Proposition \ref{prop_hydro_riemann} 
indeed implies Riemann hydrodynamics.
\begin{corollary}\label{cor_hydro_riemann}
Theorem \ref{th_hydro} holds for initial data of the form \eqref{eq:rie}.
\end{corollary}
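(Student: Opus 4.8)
The plan is to read off the time-$Nt$ density profile from the current asymptotics of Proposition \ref{prop_hydro_riemann}, using the exact conservation bookkeeping contained in the difference-of-currents identity \eqref{difference_currents}. As explained in the paragraph preceding the construction of the pseudo-equilibria, macroscopic stability (Definition \ref{def_macrostab} and the reduction theorem of \cite{bgrs}) reduces the claim to the single initial sequence $\eta_0^{\alpha,\lambda,\rho}$ of \eqref{def_riemann_inversion}; this sequence has initial profile $R_{\lambda,\rho}$ by \eqref{limit_ergo_xi}, which is available for \emph{all} $\lambda,\rho\in[0,+\infty)$ thanks to the pseudo-equilibrium completion \eqref{complete_inversion}. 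It thus remains to prove that, for this sequence, $\pi^N(\eta^\alpha_{Nt})$ converges in probability to $R_{\lambda,\rho}(\cdot,t)\,dx=h(\cdot/t)\,dx$ (recall \eqref{def_riemann_sol}).

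First I would fix $t>0$ and two macroscopic abscissae $a<b$ with $a/t,b/t<1$, and introduce the two linear paths issued from the origin with macroscopic velocities $a/t$ and $b/t$, namely $x_s=\lfloor (a/t)s\rfloor$ and $y_s=\lfloor (b/t)s\rfloor$, so that $x_0=y_0=0$, $x_{Nt}=\lfloor aN\rfloor$ and $y_{Nt}=\lfloor bN\rfloor$. Applying \eqref{difference_currents} (which holds whether or not the mass to the right is finite) with $\tau=0$ and final microscopic time $Nt$, and cancelling the initial terms since $x_0=y_0=0$, I get
\[
\frac{1}{N}\sum_{x=\lfloor aN\rfloor+1}^{\lfloor bN\rfloor}\eta^\alpha_{Nt}(x)
=\frac{1}{N}\Gamma^\alpha_{x_.}\!\left(0,Nt,\eta_0^{\alpha,\lambda,\rho}\right)
-\frac{1}{N}\Gamma^\alpha_{y_.}\!\left(0,Nt,\eta_0^{\alpha,\lambda,\rho}\right).
\]
Each current on the right is exactly of the form treated by Proposition \ref{prop_hydro_riemann}, taken with scaling parameter $Nt$, shift $u=0$ (so that $\eta_0^{\alpha,\lambda,\rho,Nt,0}=\eta_0^{\alpha,\lambda,\rho}$), and velocity $a/t$, resp.\ $b/t$, both admissible since they are $<1$. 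Dividing the proposition's limit by $t$, the two terms converge in probability to $t\,\mathcal{G}_{\lambda,\rho}(a/t)$ and $t\,\mathcal{G}_{\lambda,\rho}(b/t)$, whence the interval mass converges in probability to $t\big[\mathcal{G}_{\lambda,\rho}(a/t)-\mathcal{G}_{\lambda,\rho}(b/t)\big]$. Combining this with the identity \eqref{limiting_current} of Proposition \ref{proposition_1} and the change of variables $x=tv$ identifies the limit with $\int_a^b h(x/t)\,dx=\int_a^b R_{\lambda,\rho}(x,t)\,dx$, which is precisely the mass the target profile assigns to $(a,b]$.

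To conclude I would pass from these interval masses to vague convergence. Given $\phi\in C_c(\R)$ supported in $[-M,M]$, I would fix a partition of $[-M,M]$ into intervals, each with endpoints at velocity $<1$, approximate $\langle\pi^N(\eta^\alpha_{Nt}),\phi\rangle$ by a finite linear combination of interval masses, and bound the error by the oscillation of $\phi$ times the total empirical mass in $[-M,M]$; the latter is bounded in probability because it itself converges (to a finite limit) by the previous step, so refining the partition makes the error vanish, and each interval mass converges in probability. The only point requiring separate care is the restriction $v<1$: since $f$ is $(p-q)$-Lipschitz with $p-q\le1$ (Lemma \ref{lemma_properties_flux}(ii)), the profile is constant equal to $\rho$ on the region of velocities $\ge 1$, which I would control by comparing $\eta^\alpha$ with the homogeneous pseudo-equilibrium $\xi^{\alpha,\rho}$ (they coincide on $\{x>0\}$ initially) through the finite propagation property of Lemma \ref{lemma_finite_prop}, the density of $\xi^{\alpha,\rho}_{Nt}$ being macroscopically $\rho$ at all times. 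The genuine substance of the argument is carried entirely by Proposition \ref{prop_hydro_riemann}; the main obstacle here is purely organizational, namely handling the super-unit-speed region outside the scope of that proposition and supplying the uniform mass bound needed to upgrade interval-mass convergence to vague convergence of empirical measures.
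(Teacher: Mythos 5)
Your proposal is correct and follows essentially the same route as the paper: reduce via macroscopic stability to the explicit microscopic Riemann data, write each interval mass as a difference of currents along two linear paths via \eqref{difference_currents}, identify the limits through Proposition \ref{prop_hydro_riemann} and the identity \eqref{limiting_current}, and treat the region of velocities $\geq 1$ separately by finite propagation and comparison with the pseudo-equilibrium $\xi^{\alpha,\rho}$ (the paper does this with Proposition \ref{lemma_inv_meas_2} and attractiveness). Your only addition is to make explicit the routine passage from interval-mass convergence to vague convergence of the empirical measures, which the paper leaves implicit.
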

\begin{proof}{Corollary}{cor_hydro_riemann}
 We rely on the notation and statement of Proposition \ref{proposition_1}. 
 It is enough to prove that, for every $v,w\in\R$ such that $v<w$, 
\begin{eqnarray}\nonumber
\lim_{t\to+\infty}t^{-1}\sum_{x=\lfloor ut+vts\rfloor +1}^{\lfloor ut+wts\rfloor} 
\eta^{\lambda,\rho,t,u}_{ts}(x) & = & \int_v^w R_{\lambda,\rho}(x,s)dx\\
 & = & 
 s[{\mathcal G}_{\lambda,\rho}(v)-{\mathcal G}_{\lambda,\rho}(w)] \label{hdl_interval}
\end{eqnarray}
 in probability. Setting $T=ts$ and $U=u/s$, 
by \eqref{difference_currents}, we have
\begin{eqnarray}\nonumber
t^{-1}\sum_{x=\lfloor ut+vts\rfloor +1}^{\lfloor ut+wts\rfloor}\eta^{\lambda,\rho,t,u}_{ts}(x) 
& = & s
T^{-1}\sum_{x=\lfloor UT+vT\rfloor +1}^{\lfloor UT+wT\rfloor}\eta^{\lambda,\rho,T,U}_{T}(x)
\\
& = & 
 s T^{-1}\left(
\Gamma_{Y^T_.}^\alpha(T,\eta^{\lambda,\rho,T,U})-\Gamma_{Z^T_.}^\alpha(T,\eta^{\lambda,\rho,T,U})
\right)\label{setting_wehave}
\end{eqnarray}
where  
 $Y^T_.:=\lfloor UT+v.\rfloor$ and $Z^T_.:=\lfloor VT+w.\rfloor$ .
 Let us assume first that $w<1$. Applying Proposition \ref{prop_hydro_riemann} 
  to $Y^T_.$ and $Z^T_.$,  we obtain 
\be\label{hdl_interval_first}
\lim_{T\to+\infty}T^{-1}\sum_{x=\lfloor UT+vT\rfloor +1}^{\lfloor uT+wT\rfloor}\eta^{\lambda,\rho,T,U}_{T}(x)=
{\mathcal G}_{\lambda,\rho}(v)-{\mathcal G}_{\lambda,\rho}(w)
\ee
 in probability,  which, in view of \eqref{setting_wehave}, 
is equivalent to \eqref{hdl_interval}.  \\ \\
Let us now  prove \eqref{hdl_interval} for $w>1$. Choose $W,V\in\R$ such that $W<1<V<w$. 
By finite propagation property (Lemma \ref{lemma_finite_prop}), on an event $E_T$ 
with probability tending to $1$ as $T\to+\infty$, it holds that 
$\eta^{\alpha,\lambda,\rho,T,U}_{T}(x)=\xi^{\alpha,\rho}_T(x)$ 
for every $x>\lfloor UT+VT\rfloor$. It follows  from \eqref{ergo_density_2}  in Proposition \ref{lemma_inv_meas_2} below that 
\be\label{finiteprop_piece}
\lim_{T\to+\infty}T^{-1}\sum_{x=\lfloor UT+VT\rfloor +1}^{\lfloor UT+wT\rfloor}
\eta^{\alpha,\lambda,\rho,T,U}_T(x)=(w-V)\rho
\ee
in probability.  If $v>1$, we take $V=v$ and we are done. Indeed,
recall from Lemma \ref{lemma_properties_flux} 
that $f$ is $1$-Lipschitz; thus, using \eqref{limiting_current_inf}--\eqref{limiting_current_sup},
 ${\mathcal G}_{\lambda,\rho}(a)=f(\rho)-a\rho$ 
for every $a\geq 1$. Otherwise,  
applying \eqref{hdl_interval_first}  to $W$ yields the limit (still in probability)
\be\label{main_piece}
\lim_{T\to+\infty}T^{-1}\sum_{x=\lfloor UT+vT\rfloor +1}^{\lfloor UT+WT\rfloor}
\eta^{\alpha,\lambda,\rho,T,U}_T (x)=
{\mathcal G}_{\lambda,\rho}(v)-{\mathcal G}_{\lambda,\rho}(W)
\ee
By attractiveness property \eqref{attractive_1}, we have
\[
T^{-1}\sum_{x=\lfloor UT+WT\rfloor +1}^{\lfloor UT+VT\rfloor}\eta^{\alpha,\lambda,\rho,T,U}_T(x)\leq
T^{-1}\sum_{x=\lfloor UT+WT\rfloor +1}^{\lfloor UT+VT\rfloor}\xi^{\alpha,\max(\lambda,\rho)}_T(x)
\]
%
%
Using \eqref{ergo_density_2} again, we have 
\be\label{small_piece}
\quad\lim_{T\to+\infty}{\left\{
\frac{1}{T}\sum_{x=\lfloor UT+WT\rfloor +1}^{\lfloor UT+VT\rfloor}
\eta^{\alpha,\lambda,\rho,T,U}_T(x)-(V-W)\max(\lambda,\rho)\right\}}^+
=0
\ee
Since $W$ and $V$ can be chosen arbitrarily close to $1$, 
and ${\mathcal G}_{\lambda,\rho}$ is continuous, 
\eqref{finiteprop_piece}--\eqref{small_piece} 
imply the limit 
\be\label{all_together}
\qquad\lim_{T\to+\infty}
\frac{1}{T}\sum_{x=\lfloor UT+vT\rfloor +1}^{\lfloor UT+wT\rfloor}\eta^{\alpha,\lambda,\rho,T,U}_T(x)=
{\mathcal G}_{\lambda,\rho}(v)-{\mathcal G}_{\lambda,\rho}(1)+(w-1)\rho
\ee
in probability.  
 So, proceeding as after \eqref{finiteprop_piece},  
the r.h.s. of \eqref{all_together} coincides with that of 
\eqref{hdl_interval_first}. 
\end{proof}
\mbox{}\\ \\
For the proof of Proposition \ref{prop_hydro_riemann}, 
we shall need to know the behaviour of equilibria and pseudo-equilibria processes 
in terms of asymptotic current and hydrodynamic profile, uniformly with respect 
to density. This is stated in the following proposition, which will be proved in Subsection 
\ref{subsec:current_pseudo}.
\begin{proposition}\label{lemma_inv_meas_2}
For $\rho\in[0,+\infty)$, let $(\xi^{\alpha,\rho}_t)_{t\geq 0}$ 
denote the process with initial configuration $\xi^{\alpha,\rho}_0$.
Then, for every $A,B\in\R$ such that $A<B$, every $\varepsilon>0$ 
and every $\rho_0\in[0,+\infty)$, the following limits hold in probability:
\begin{eqnarray}
\lim_{t\to+\infty}
\sup_{\scriptstyle A<a<b<B\atop\scriptstyle b-a>\varepsilon,\,\rho\leq\rho_0}\left|
\frac{1}{(b-a)t}\sum_{x=\lfloor at\rfloor}^{\lfloor bt\rfloor}\xi_t^{\alpha,\rho}(x)
  - \rho\right|&=& 0\label{ergo_density_2}\\
\lim_{t\to+\infty}\sup_{\scriptstyle A<a<B\atop\scriptstyle \rho\leq\rho_0}\left|
\frac{1}{t}\Gamma^\alpha_{\lfloor at\rfloor}(t,\xi_0^{\alpha,\rho})-f(\rho)
\right|&=& 0\label{ergo_flux_2}
\end{eqnarray}
\end{proposition}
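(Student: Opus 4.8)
The plan is to obtain the density statement \eqref{ergo_density_2} from the current statement \eqref{ergo_flux_2}, and to concentrate the real work on \eqref{ergo_flux_2}. For the reduction I would apply the mass--current balance \eqref{difference_currents} across the two fixed sites $\lfloor at\rfloor$ and $\lfloor bt\rfloor$, which gives $\sum_{x=\lfloor at\rfloor+1}^{\lfloor bt\rfloor}\xi^{\alpha,\rho}_t(x)=\sum_{x=\lfloor at\rfloor+1}^{\lfloor bt\rfloor}\xi^{\alpha,\rho}_0(x)+\Gamma^\alpha_{\lfloor at\rfloor}(t,\xi_0^{\alpha,\rho})-\Gamma^\alpha_{\lfloor bt\rfloor}(t,\xi_0^{\alpha,\rho})$. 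The initial term divided by $(b-a)t$ converges to $\rho$: this is the initial--profile law of large numbers \eqref{limit_ergo_xi}, which passes from origin--anchored blocks to an arbitrary macroscopic block $[\lfloor at\rfloor,\lfloor bt\rfloor]$ by writing the latter as a difference of two Ces\`aro averages and invoking the averaging property of $\alpha$ in Assumption \ref{assumption_ergo}. Once \eqref{ergo_flux_2} is known uniformly, the two current terms are each $tf(\rho)+o(t)$, so their difference divided by $(b-a)t$ is $o(1)$ uniformly over $b-a>\varepsilon$, yielding \eqref{ergo_density_2}.

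For \eqref{ergo_flux_2} I would treat the subcritical and supercritical ranges separately and establish the pointwise--in--$(\rho,a)$ limit first. For $\rho<\rho_c$ the configuration $\xi^{\alpha,\rho}_0$ is a genuine equilibrium $\mu^{\alpha,\rho}$ (construction \eqref{def_inversion}), so \eqref{standard_current} together with \eqref{current_eq} gives $\Exp\Gamma^\alpha_{x_0}(t,\xi_0^{\alpha,\rho})=tf(\rho)$ exactly for every $x_0$. It then remains to prove a law of large numbers for the current: the martingale part of $\Gamma^\alpha_{x_0}(t)$ has $L^2$--norm $O(\sqrt t)$ and is negligible after dividing by $t$, while the compensator $t^{-1}\int_0^t\{p\alpha(x_0)g(\xi_s(x_0))-q\alpha(x_0+1)g(\xi_s(x_0+1))\}ds$ concentrates at $f(\rho)$, which I would obtain from a variance estimate exploiting the product structure of $\mu^{\alpha,\rho}$ and the mass balance (currents across sites within $o(t)$ of each other differ by the vanishing mass fluctuation of an equilibrium block). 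For $\rho\geq\rho_c$, where $f(\rho)=(p-q)c$, the upper bound is clean: by Corollary \ref{corollary_consequence} the current across $\lfloor at\rfloor$ is dominated by the source current $\Gamma^\alpha_{\lfloor at\rfloor}(t,\eta^{*,\lfloor at\rfloor})$, governed by Proposition \ref{current_source}; since its residual $p[\alpha(\lfloor at\rfloor)-c]$ need not vanish at a generic site, I would first shift the observer to a slow site $x_n\leq\lfloor at\rfloor$ with $\lfloor at\rfloor-x_n=o(t)$ and $\alpha(x_n)\to c$ (such sites are macroscopically dense by Assumption \ref{assumption_dense}, which is available precisely in the supercritical regime), control the intervening mass $\sum_{x_n<x\le\lfloor at\rfloor}\xi^{\alpha,\rho}_0(x)=o(t)$ via \eqref{difference_currents}, and conclude $\limsup t^{-1}\Gamma\leq(p-q)c$ since the residual vanishes at $x_n$.

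The hard part will be the supercritical lower bound $\liminf t^{-1}\Gamma^\alpha_{\lfloor at\rfloor}(t,\xi^{\alpha,\rho}_0)\geq(p-q)c$. The natural attempt — dominate from below by a near--critical equilibrium $\xi^{\alpha,\rho_c-\delta}_0$ and let $\delta\to0$, using $f(\rho_c-)=(p-q)c$ — fails as stated, because the current across a fixed site is \emph{not} monotone in the initial configuration (extra mass just to the right of the observer produces extra leftward crossings), and Lemma \ref{lemma_current} only controls this defect by the accumulated mass difference over a window of size $O(t)$, which is itself of order $t$. I expect this to be the core of Subsection \ref{subsec:current_pseudo}: one must show that the supercritical excess drains toward the sparse condensation sites rather than obstructing the flow, so that the bottleneck rate $c$ is genuinely saturated. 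I would approach it by tracking the second--class decomposition $\xi^{\alpha,\rho}_t-\xi^{\alpha,\rho_c-\delta}_t\geq0$ under the Harris coupling \eqref{coupled_generator} and showing that the net leftward contribution of the excess particles across $\lfloor at\rfloor$ is $o(t)$, again leaning on the density of slow sites to guarantee efficient escape.

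Finally, uniformity. Over $\rho\in[0,\rho_0]$ I would use that the block density $\frac{1}{(b-a)t}\sum\xi^{\alpha,\rho}_t(x)$, and the current after the sandwiching above, are monotone in $\rho$ through attractiveness \eqref{ordered_stat_later}, while the limit $\rho\mapsto f(\rho)$ is continuous; pointwise convergence on a finite grid then upgrades to uniform convergence by a Dini / P\'olya--type argument, and the same monotonicity in the upper endpoint handles uniformity over the block $[a,b]$. Over $a\in(A,B)$ I would combine the finite--propagation property (Lemma \ref{lemma_finite_prop}) with the spatial averaging of Assumption \ref{assumption_ergo} to reduce the supremum over $a$ to a maximum over finitely many positions.
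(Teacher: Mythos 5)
Most of your architecture matches the paper's: the reduction of the density statement to the current statement via \eqref{difference_currents} and \eqref{limit_ergo_xi}, the upgrade from pointwise to uniform convergence by discretizing $(\rho,a,b)$ and exploiting attractiveness \eqref{ordered_stat_later} together with continuity of $f$, and the supercritical upper bound obtained by shifting the observer to a nearby $\varepsilon$-slow site (at distance $o(t)$, guaranteed by Assumption \ref{assumption_dense}), dominating by the source configuration $\eta^{*,y}$ via Corollary \ref{corollary_consequence} and invoking Proposition \ref{current_source}. The subcritical case is handled in the paper by citation to earlier work plus a comparison lemma based on Lemma \ref{lemma_current}; your martingale-plus-spatial-averaging sketch is plausible but is essentially a re-proof of that known input, so I do not count it against you.

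The genuine gap is the supercritical lower bound, which you correctly identify as the core difficulty but do not actually prove. You diagnose the obstruction accurately (the current across a fixed site is not monotone in the initial configuration, so sandwiching by $\xi^{\alpha,\rho_c-\delta}$ does not directly transfer), but your proposed remedy --- tracking the second-class particles $\xi^{\alpha,\rho}_t-\xi^{\alpha,\rho_c-\delta}_t$ and showing their net leftward contribution across $\lfloor at\rfloor$ is $o(t)$ by an ``efficient escape'' argument --- is left entirely unexecuted, and it is not what the paper does. The paper's resolution is simpler and sidesteps second-class particle tracking altogether: shift the observer to the site $z_N$ immediately to the \emph{left} of an $\varepsilon$-slow site, i.e.\ $\alpha(1+z_N)\leq c+\varepsilon$, and compute the expected current exactly via \eqref{standard_current}. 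The leftward contribution $q\,\alpha(1+z_N)\,g[\eta_s(1+z_N)]$ is then bounded above by $q(c+\varepsilon)$ \emph{deterministically}, using only $g\leq 1$ and the slowness of the site --- no control of the excess mass is needed. The rightward contribution $p\,\alpha(z_N)\,g[\eta_s(z_N)]$ is bounded below by attractiveness against the \emph{stationary} process at density $\rho_c-\delta$, whose one-site expected rate is exactly $\overline{R}^{\,\,-1}(\rho_c-\delta)\to c$ by \eqref{mean_rate}. Letting $\delta,\varepsilon\to 0$ gives $(p-q)c$. Without this (or a completed version of your escape argument), your proof of \eqref{ergo_flux_2}, and hence of \eqref{ergo_density_2} as you derive it, is incomplete for $\rho\geq\rho_c$.
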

%
%
In the above proposition, the  {\em uniformity} with respect to $\rho$ is crucial for our needs. A consequence of this uniformity is that limits similar to \eqref{ergo_density_2}--\eqref{ergo_flux_2} still hold for a {\em random} $\rho$ instead of a deterministic $\rho$, a situation that will arise in Subsection \ref{subsec:interface} and in the sequel. More precisely,  we can state the following corollary to Proposition \ref{lemma_inv_meas_2}:
\begin{corollary}\label{corollary_ab} For every  $A,B\in\R$ such that $A<B$, every $\varepsilon>0$, and every family $(\rho_t)_{t\geq 0}$ of $[0,+\infty)$-valued {\em random variables}, the following limits hold in probability:
\begin{eqnarray}\label{ergo_density_2_random}
\lim_{t\to+\infty}
\sup_{\scriptstyle A<a<b<B\atop\scriptstyle b-a>\varepsilon}\left|
\frac{1}{(b-a)t}\sum_{x=\lfloor at\rfloor}^{\lfloor bt\rfloor}\xi_t^{\alpha,\rho_t}(x)
  - \rho_t\right| & =  & 0 \\
	\lim_{t\to+\infty}\sup_{\scriptstyle A<a<B}\left|
\frac{1}{t}\Gamma^\alpha_{\lfloor at\rfloor}(t,\xi_0^{\alpha,\rho_t})-f\left(\rho_t\right)
\right|&=& 0\label{ergo_flux_2_random}
\end{eqnarray}
\end{corollary}
\begin{remark}\label{remark_ab}
As will be seen in the proof of Proposition \ref{lemma_inv_meas_2}, the reason why the uniformity in Proposition \ref{lemma_inv_meas_2} (and consequently Corollary \ref{corollary_ab}) holds 
is that we are working with a simultaneous {\em monotone} coupling of all the equilibrium (or pseudo-equilibrium) processes. Note that we cannot {\em a priori} relate the {\em distribution} of $\xi_t^{\alpha,\rho_t}$ for a random $\rho$ to the equilibrium distributions of our process, but this will not be 
 needed for our purpose. In other words, if $\rho_t$ is a $[0,\rho_c)$-valued random variable, though one might be tempted to call $\xi_t^{\alpha,\rho_t}$  a ``random equilibrium state'' - in the sense that it is the state at time $t$ of an equilibrium process with a randomly chosen parameter, it is not {\em in general} itself an ``equilibrium state'' - in the sense that its distribution is not necessarily a stationary distribution, though this may happen in particular situations, like for instance if $\rho_t$ is independent of the family of coupled processes $\left(\xi^{\alpha,r}_.\right)_{r\in[0,\rho_c)}$.
\end{remark}
\subsection{The interface process}\label{subsec:interface}
 To construct our interface process,
we shall rely on a property of nearest-neighbour attractive  systems 
(see e.g. \cite[Lemma 4.7]{lig} or \cite[Lemma 6.5]{rez}), namely that
the number of sign changes between the difference of two coupled configurations 
 (through rules \textit{(J1)--(J3)}, that is,  generator
\eqref{coupled_generator}) in   such a system  is a nonincreasing function of time. 
The location of a sign change
can be viewed as an interface,
see also \cite[Lemma 4.3]{bmrs2}  in the context of our model. 
Here we shall explore this property
more precisely by constructing  simultaneous nearest-neighbour dynamics 
for {\em all} interfaces with all equilibria or pseudo-equilibria processes, 
which will define the evolution of a new version of the microscopic density profile, 
whose scaling limit will be investigated. 
 \\ 
The existence and definition of the interface process will be made possible by the following lemma.
In the sequel, without loss of generality, we assume $\lambda\leq\rho$.  For notational simplicity,
we shall henceforth write $\eta_0^{\alpha,\lambda,\rho,t}$ instead of $\eta_0^{\alpha,\lambda,\rho,t,u}$
for the configuration defined by \eqref{def_riemann_inversion_shift}. 
\begin{proposition}
\label{lemma_interface}
There exists a family of processes $({\mathcal X}_s^{\alpha,r,t})_{s\geq 0}$ 
indexed by $r\in[\lambda,\rho]$,
such that 
\be\label{interface_initial}{\mathcal X}_0^{\alpha,r,t}=\lfloor ut\rfloor,\ee
and the following holds:\\ 
(i) For every $r\in[\lambda,\rho]$ and $s\geq 0$, ${\mathcal X}_s^{\alpha,r,t}$ 
is an interface between $\eta_s^{\alpha,\lambda,\rho,t}$ and $\xi_s^{\alpha,r}$ in the sense that 
\be\label{interface_property_2}
\ba{lll}
\dsp\eta_s^{\alpha,\lambda,\rho,t}(y)\leq\xi^{\alpha,r}_s(y)
& \mbox{ for } & \dsp y\leq {\mathcal X}^{\alpha,r,t}_s\\
\dsp \eta_s^{\alpha,\lambda,\rho,t}(y)\geq\xi^{\alpha,r}_s(y) & \mbox{ for } & \dsp {y}> {\mathcal X}^{\alpha,r,t}_s
\ea\ee
(ii) For every $r\in[\lambda,\rho]$, $({\mathcal X}^{\alpha,r,t}_s)_{s\geq 0}$ is 
a piecewise constant c\`adl\`ag $\Z$-valued process
with nearest-neighbour jumps. \\ 
(iii) For every $r,r'\in[\lambda,\rho]$ and every $s\geq 0$, it holds that
\be\label{order_interface_2}
r\leq r'\Rightarrow {\mathcal X}^{\alpha,r,t}_s\leq {\mathcal X}^{\alpha,r',t}_s
\ee
(iv) For every $r\in[\lambda,\rho]$ and  $t>0$, there exist  Poisson processes 
${\mathcal N}_.^{\pm, r,  t}$ with intensity $1$ such that, for all $s\geq 0$,
\be\label{poisson_bounds}-{\mathcal N}_s^{-, r,  t}\leq {\mathcal X}^{\alpha,r,t}_s-{\mathcal X}^{\alpha,r,t}_0\leq{\mathcal N}_s^{+, r,  t}\ee
\end{proposition}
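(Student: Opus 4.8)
The plan is to construct the interface processes $({\mathcal X}_s^{\alpha,r,t})_{r\in[\lambda,\rho]}$ simultaneously from the same Harris system $\omega$ used to drive $\eta^{\alpha,\lambda,\rho,t}$ and all the pseudo-equilibria $\xi^{\alpha,r}$. The starting point is the sign-change monotonicity recalled at the beginning of the subsection: for the coupled generator \eqref{coupled_generator}, the number of sign changes of $\eta_s^{\alpha,\lambda,\rho,t}(\cdot)-\xi_s^{\alpha,r}(\cdot)$ is nonincreasing in $s$. Because of our specific initial data \eqref{def_riemann_inversion_shift}, at $s=0$ the difference $\eta_0^{\alpha,\lambda,\rho,t}-\xi_0^{\alpha,r}$ has a single sign change located at $\lfloor ut\rfloor$: for $r\in[\lambda,\rho]$ one checks using \eqref{ordered_stat} that $\eta_0^{\alpha,\lambda,\rho,t}=\xi_0^{\alpha,\lambda}\le\xi_0^{\alpha,r}$ to the left of $\lfloor ut\rfloor$ and $\eta_0^{\alpha,\lambda,\rho,t}=\xi_0^{\alpha,\rho}\ge\xi_0^{\alpha,r}$ to the right. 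First I would make this single-interface structure the defining property, taking ${\mathcal X}_s^{\alpha,r,t}$ to be the (unique) location of the surviving sign change, which gives \eqref{interface_initial} and \eqref{interface_property_2} by construction.

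The crux is turning ``location of the sign change'' into a genuine nearest-neighbour c\`adl\`ag $\Z$-valued process with a controllable jump mechanism, i.e. items (ii) and (iv). The approach is to track the interface explicitly through the three coupled jump types \textit{(J1)--(J3)}. Since the difference $D_s(y):=\eta_s^{\alpha,\lambda,\rho,t}(y)-\xi_s^{\alpha,r}(y)$ is $\le 0$ for $y\le{\mathcal X}_s^{\alpha,r,t}$ and $\ge 0$ for $y>{\mathcal X}_s^{\alpha,r,t}$, the interface can only move when a \textit{discrepancy} jump (type \textit{(J2)} or \textit{(J3)}, where exactly one of the two coupled processes moves a particle) occurs at a bond adjacent to the current interface position. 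A single such discrepancy crossing the bond $({\mathcal X}_s,{\mathcal X}_s+1)$ shifts the sign change by exactly one unit left or right; this yields the nearest-neighbour property and c\`adl\`ag paths. For the Poisson bound \eqref{poisson_bounds}, the idea is that each jump of ${\mathcal X}^{\alpha,r,t}$ is triggered by a potential jump event of $\omega$ at a bond adjacent to the interface, and since $g\le 1$ and $\alpha\le 1$ the rate of such triggering events is at most $1$ in each direction; dominating the up-moves and down-moves by independent rate-$1$ Poisson processes ${\mathcal N}^{\pm,r,t}$ gives the two-sided bound. One must be slightly careful because the relevant bond moves with the interface, but the standard device is to bound the displacement by the total number of adjacent discrepancy events, which is itself Poisson-dominated.

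For the ordering property (iii), the plan is to invoke attractiveness in the form \eqref{ordered_stat_later}: for $r\le r'$ we have $\xi_s^{\alpha,r}\le\xi_s^{\alpha,r'}$ almost surely for all $s$. Since ${\mathcal X}_s^{\alpha,r,t}$ separates the region where $\eta_s^{\alpha,\lambda,\rho,t}\le\xi_s^{\alpha,r}$ from where it exceeds $\xi_s^{\alpha,r}$, and enlarging $r$ to $r'$ only makes $\xi_s$ larger, the region where $\eta_s$ dominates can only shrink; this forces the interface to move right, i.e. ${\mathcal X}_s^{\alpha,r,t}\le{\mathcal X}_s^{\alpha,r',t}$. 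The cleanest way to phrase this is to define, for each fixed $r$, ${\mathcal X}_s^{\alpha,r,t}$ via an explicit formula such as the largest $y$ with $\sum_{x\le y}[\,\eta_s^{\alpha,\lambda,\rho,t}(x)-\xi_s^{\alpha,r}(x)\,]\le 0$ (finite mass to the left makes the partial sums well defined), and then read off monotonicity in $r$ directly from monotonicity of the summand in $r$.

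The step I expect to be the main obstacle is item (iv), the Poisson domination, because the triggering bond is not fixed but co-moves with the interface, so the events driving ${\mathcal X}^{\alpha,r,t}$ are not a priori a thinning of a single stationary Poisson process on a fixed bond. The hard part will be setting up the dominating Poisson processes ${\mathcal N}^{\pm,r,t}$ cleanly — most likely by bounding each jump of the interface by a distinct potential jump event of $\omega$ and arguing that the number of such events up to time $s$ is stochastically dominated by a rate-$1$ Poisson process, uniformly in the (random) interface trajectory. A secondary subtlety is checking that a single sign change truly persists for all $s$ rather than the interface splitting; this is exactly where the sign-change monotonicity lemma must be applied with care, ruling out the creation of new sign changes and confirming that the unique initial one cannot be annihilated without another being created.
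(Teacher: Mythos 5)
Your overall strategy --- couple everything through the Harris system, track the single sign change of $\eta_s^{\alpha,\lambda,\rho,t}-\xi_s^{\alpha,r}$, move it by nearest-neighbour steps at adjacent discrepancy jumps, dominate by Poisson processes, and get the ordering in $r$ from attractiveness --- is the same as the paper's. But the central object is never actually constructed, and both concrete definitions you float fail. First, ``the (unique) location of the surviving sign change'' is ill-posed: the difference $\eta_s-\xi_s^{\alpha,r}$ typically vanishes on whole intervals (already at $s=0$), so the configurations alone do not determine ${\mathcal X}_s^{\alpha,r,t}$. In the paper the interface is an \emph{auxiliary} process defined by explicit update rules, not a function of $(\eta_s,\xi_s^{\alpha,r})$. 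Second, your fallback ``largest $y$ with $\sum_{x\le y}[\eta_s(x)-\xi_s^{\alpha,r}(x)]\le 0$'' controls a partial sum rather than pointwise signs, so it does not deliver \eqref{interface_property_2}; moreover both configurations have infinite mass to the left (the summands have mean roughly $\lambda-r<0$ per site), so the partial sums diverge and the formula is not even well defined. Third, your description of the dynamics is wrong in a way that matters: it is not true that a single discrepancy crossing the bond $({\mathcal X}_s,{\mathcal X}_s+1)$ shifts the interface by one unit. In the correct construction the interface moves only when a lone $\xi$- (resp.\ $\eta$-) particle crosses the bond \emph{and} the target site carries zero discrepancy (the second condition in \eqref{case_1}, \eqref{case_2}); when the target site already has a discrepancy of compatible sign the interface must stay put, or \eqref{interface_property_2} would be read off wrongly there. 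Getting this case analysis right is the actual content of the proof of \textit{(i)}, and it is also what makes \textit{(iii)} nontrivial: one checks that when two interfaces coincide at $x$, a right jump for $r$ forces one for $r'$, using $\xi^{\alpha,r}_{s-}\le\xi^{\alpha,r'}_{s-}$ together with the sandwich at time $s-$ to upgrade the inequality $\eta_{s-}(x+1)\le\xi^{\alpha,r'}_{s-}(x+1)$ to an equality.

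You correctly flag the Poisson domination \textit{(iv)} as the remaining obstacle but do not resolve it. Once the explicit dynamics is in place the resolution is short: a right jump of ${\mathcal X}^{\alpha,r,t}$ is triggered only by a Harris event at the interface's \emph{own current position} with $z=1$ (condition \eqref{case_12}), so one lets ${\mathcal N}^{+,r,t}$ be the walk started at $\lfloor ut\rfloor$ that jumps right at \emph{every} such event at its own position; ${\mathcal X}$ can never overtake it, since whenever the two sit on the same site any event moving ${\mathcal X}$ right also moves ${\mathcal N}^{+,r,t}$ right, and symmetrically for ${\mathcal N}^{-,r,t}$. This disposes of the ``moving bond'' issue you raise. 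As written, your proposal identifies the right ingredients but leaves the definition of the process, the preservation of \eqref{interface_property_2}, and the domination argument as genuine gaps.
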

Proposition \ref{lemma_interface} will be proved at the end of this subsection.
Observe that, since ${\mathcal X}^{\alpha,r,t}_.$ is $\Z$-valued and monotone 
with respect to $r$, as a function of $r$ 
(for fixed $s$ and $t$), it is a step function. We may define its generalized inverses:
\begin{eqnarray}\label{inverse_interface_minus}
{\mathcal R}^{-,\alpha,x,t}_s & := & \sup\left\{
r\in[\lambda,\rho]:\,{\mathcal X}^{\alpha,r,t}_s<x
\right\}\\
\label{inverse_interface_plus}
{\mathcal R}^{+,\alpha,x,t}_s & := & \inf\left\{
r\in[\lambda,\rho]:\,{\mathcal X}^{\alpha,r,t}_s>x
\right\}\end{eqnarray}
 for $x\in\R$. 
Since ${\mathcal X}^{\alpha,r,t}_.$ takes integer values, ${\mathcal R}^{+,\alpha,.,t}_s$
and ${\mathcal R}^{-,\alpha,.,t}_s$ have the same constant value on $(x,x+1)$ for every $x\in\Z$. 
Both $({\mathcal X}^{\alpha,r,t}_s)_{s\geq 0, r\geq 0}$ 
and $({\mathcal R}^{\pm,\alpha,x,t}_s)_{s\geq 0,x\in\Z}$
 will be called  the {\em interface process}. The latter 
is an approximation of the (monotone) hydrodynamic profile, 
while the former is an approximation of its inverse, 
which gives the positions of the different density levels 
$r$ of the profile. We shall see below that, after rescaling, 
they do converge to the profile and inverse profile. 
It follows from \eqref{interface_property_2}, \eqref{order_interface_2}  and
\eqref{inverse_interface_minus}--\eqref{inverse_interface_plus} that, for any $x,y\in\Z$ such that
$x<y$, and any $s\geq 0$,
\be\label{property_inverse_interface}
\xi^{\alpha,r^-}_s( z)\leq\eta^{\alpha,\lambda,\rho,t}_s( z)
\leq\xi^{\alpha,r^+}_s( z),\quad\mbox{for all }
z\in(x,y)\cap\Z
\ee
provided $r^-$ and $r^+$ satisfy 
\be\label{densities_inverse_interface}
0<r^-<{\mathcal R}^{-,\alpha,x,t}_s, \quad
r^+ >{\mathcal R}^{+,\alpha,y,t}_s,\quad
\ee
In particular, in a region where  
 ${\mathcal R}^{+,\alpha,x,t}_s$ and ${\mathcal R}^{-,\alpha,x,t}_s$  do not vary too much 
with $x$  and remain close to each other, the process is close to 
$\xi^{\alpha,\rho}_s$ for some {\em random} value of $\rho$. 
We may view this as a coupling formulation 
of the local equilibrium property, as this means that the configuration is locally close to that of an equilibrium (or pseudo-equilibrium) configuration with random density parameter. Recall however from Remark \ref{remark_ab} that, due to the randomness of this parameter, this coupling information does not a priori translate into an information on the local distribution of $\eta^{\alpha,\lambda,\rho}_s$ - like in particular being close to a stationary distribution, and that such information is not necessary to our purpose. 
On the other hand,
whenever  ${\mathcal R}^{+,\alpha,x,t}-{\mathcal R}^{-,\alpha,x,t}$ 
is of order one, this can be interpreted as the presence of a shock at microscopic location $x$.\\ \\
In order to prove Proposition \ref{prop_hydro_riemann}, 
we shall have to study limits of the time-rescaled processes
\begin{eqnarray}\label{rescaled_interface_1}
x^{\alpha,t}(r,s) & := & t^{-1}{\mathcal X}^{\alpha,r,t}_{ts}\\
\rho^{\pm,\alpha,t}(y,s) & : = & {\mathcal R}^{\pm,\alpha,ty,t}_{ts}\label{rescaled_interface_2}
\end{eqnarray}
defined for $r\in[\lambda,\rho]$, $s\geq 0$ and $y\in\R$. 
%
%
For every $x\in\Z$,
the restrictions of $\rho^{+,\alpha,t}(.,s)$
and $\rho^{-,\alpha,t}(.,s)$ to $(x/t,(x+1)/t)$ have the same constant value. We denote these common restrictions by $\rho^{\alpha,t}(.,s)$:
\be\label{common_value}
\quad\rho^{\alpha,t}(y,s):=\rho^{+,\alpha,t}(y,s)=\rho^{-,\alpha,t}(y,s),\quad\forall x\in\Z,\,\forall y\in\left(\frac{x}{t},\frac{x+1}{t}\right)
\ee 
%
%
Note that, as functions of $r$ and $y$, 
$x^{\alpha,t}(.,s)$ and $\rho^{\pm,\alpha,t}(.,s)$ are 
generalized inverses of each other. 
 We next define a convenient topology to study limits of these rescaled interfaces. \\ \\
 For $a,b\in\R$ such that $a<b$, let ${\mathcal F}_{\lambda,\rho}^{a,b}$ 
denote the set of nondecreasing functions $\psi$ on $\R$ such that
$\psi(x)=\lambda$ for $x<a$ and $\psi(x)=\rho$ for $x>b$. An element 
$\psi$ of ${\mathcal F}_{\lambda,\rho}^{a,b}$ can be identified 
with its derivative, that is a measure on $\R$ supported on $[a,b]$ 
with mass $\rho-\lambda$. The generalized inverse $\psi^{-1}$ of $\psi$ lies in the set 
${\mathcal E}_{\lambda,\rho}^{a,b}$ of nondecreasing functions on 
$[\lambda,\rho]$ with value $a$ at $\lambda$ and $b$ at $\rho$. 
An element of ${\mathcal E}_{\lambda,\rho}^{a,b}$ is identified 
with its derivative, that is a measure on $[\lambda,\rho]$ with 
mass $b-a$. In the sequel, we identify measures on $[a,b]$ with 
measures on $\R$ supported on $[a,b]$. We denote by 
${\mathcal M}_{a,b,m}$ the set of measures on $[a,b]$ with mass 
no greater than $m$, and we equip this set with the topology of 
weak convergence, for which it is compact. By Helly's theorem, 
the notion of convergence induced on either set 
${\mathcal F}_{\lambda,\rho}^{a,b}$ or ${\mathcal E}_{\lambda,\rho}^{a,b}$ 
is that of pointwise convergence at every continuity point of the limiting function.
With these topologies, 
\be\label{involution}
\mbox{the involution }\psi\mapsto\psi^{-1}\mbox{ between }
{\mathcal F}_{\lambda,\rho}^{a,b}\mbox{ and }
{\mathcal E}_{\lambda,\rho}^{a,b}\mbox{ is bicontinuous.}
\ee
For $T>0$, we let $\widetilde{\mathcal M}_{a,b,m,T}$ 
denote the set of continuous functions from $[0,T]$ to 
 ${\mathcal M}_{a,b,m}$ equipped with the topology of uniform convergence. 
In the next subsection, in parallel to Proposition \ref{prop_hydro_riemann}, 
we shall prove its following counterpart in terms of the interface process.
\begin{proposition}\label{prop_hydro_interface}
For every  $T>0$ and $V>1$,\\ 
(i)  the processes $(\rho^{\pm,\alpha,t}(.,s))_{s\geq 0}$ 
converge in probability  in $\widetilde{\mathcal M}_{u-VT,u+VT,\rho-\lambda,T}$   
as $t\to+\infty$ to the deterministic process $(\rho(.,s))_{s\geq 0}$, 
where $\rho(x,s)={\mathcal R}_{\lambda,\rho}(x -u,s)$ is the solution 
given by \eqref{def_riemann_sol} of the Riemann problem
\eqref{conservation_law}  with initial datum  \eqref{eq:rie} 
 centered at $u$; \\ 
(ii)  the process $(x^{\alpha,t}(.,s))_{s\geq 0}$ 
converges in probability  in $\widetilde{\mathcal M}_{\lambda,\rho,2VT,T}$   
as $t\to+\infty$ to the deterministic trajectory $(x(r,s))_{s\geq 0}$  such that, 
for every $s\geq 0$, $x(.,s)$ is the generalized inverse of $\rho(.,s)$. 
\end{proposition}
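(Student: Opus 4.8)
The plan is to prove (i) and then obtain (ii) essentially for free. Since $x^{\alpha,t}(\cdot,s)$ and $\rho^{\pm,\alpha,t}(\cdot,s)$ are generalized inverses of one another, and the involution $\psi\mapsto\psi^{-1}$ is bicontinuous by \eqref{involution}, the two statements are equivalent once the appropriate compactness is in place. Moreover $\rho^{+,\alpha,t}$ and $\rho^{-,\alpha,t}$ share the same value on every lattice interval, hence have the same scaling limit, so it suffices to identify the limit of $\rho^{-,\alpha,t}$.

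The core is a pointwise identification: for every $s\in[0,T]$ and every continuity point $y$ of $x\mapsto\rho(x,s):=R_{\lambda,\rho}(x-u,s)$,
\[
\rho^{\pm,\alpha,t}(y,s)\longrightarrow\rho(y,s)\quad\text{in probability as }t\to+\infty .
\]
I would prove this by sandwiching. Fix $\delta>0$ and look at the block $z\in(\lfloor(y-\delta)t\rfloor,\lfloor(y+\delta)t\rfloor)$. Setting $r^-:=\rho^{-,\alpha,t}(y-\delta,s)$ and $r^+:=\rho^{+,\alpha,t}(y+\delta,s)$, the inverse relations \eqref{inverse_interface_minus}--\eqref{inverse_interface_plus} together with \eqref{property_inverse_interface} give, on this block, the ordering $\xi^{\alpha,r^-}_{ts}\le\eta^{\alpha,\lambda,\rho,t}_{ts}\le\xi^{\alpha,r^+}_{ts}$, with random levels $r^\pm\in[\lambda,\rho]$. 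Averaging over the block, the two outer averages converge to $r^-$ and $r^+$ by \eqref{ergo_density_2} of Proposition \ref{lemma_inv_meas_2}; here the uniformity over $\rho\le\rho_0:=\rho$ is indispensable precisely because $r^\pm$ are themselves random. The inner average converges to $(2\delta)^{-1}\int_{y-\delta}^{y+\delta}\rho(x,s)\,dx$ by the Riemann hydrodynamics already established in Corollary \ref{cor_hydro_riemann}. Hence $r^-\le\rho(y,s)+o(1)$ and $r^+\ge\rho(y,s)-o(1)$; a standard squeeze using window shifts, spatial monotonicity of $\rho^{\pm,\alpha,t}(\cdot,s)$ and the ordering $\rho^-\le\rho^+$, and continuity of $\rho(\cdot,s)$ at $y$ as $\delta\to0$, pins both $\rho^{\pm,\alpha,t}(y,s)$ onto $\rho(y,s)$.

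With the pointwise limit in hand I would upgrade to the functional statement. For fixed $s$, convergence of the nondecreasing functions $\rho^{\pm,\alpha,t}(\cdot,s)$ at all continuity points of the deterministic limit (which has at most countably many discontinuities) is equivalent to weak convergence of their derivative measures in ${\mathcal M}_{u-VT,u+VT,\rho-\lambda}$, the boundary values $\lambda,\rho$ being frozen. To make this uniform in $s\in[0,T]$ I would use the Poisson bounds \eqref{poisson_bounds} of Proposition \ref{lemma_interface}: they confine each rescaled interface to within $t^{-1}\max({\mathcal N}^{+,r,t}_{ts},{\mathcal N}^{-,r,t}_{ts})$ of $u$, whose limit is $s\le T<VT$ — this is exactly where the hypothesis $V>1$ leaves the needed room — and they furnish a modulus of continuity in time that is uniform in $t$. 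Equicontinuity in time combined with convergence along a countable dense set of times yields convergence in $\widetilde{\mathcal M}_{u-VT,u+VT,\rho-\lambda,T}$, i.e.\ (i). Statement (ii) then follows by applying the bicontinuous involution \eqref{involution} pathwise: the limit of $x^{\alpha,t}(\cdot,s)$ is the generalized inverse of $\rho(\cdot,s)$, namely $x(\cdot,s)$, now viewed in $\widetilde{\mathcal M}_{\lambda,\rho,2VT,T}$.

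I expect the main obstacle to be this functional upgrade: obtaining a modulus of continuity for $s\mapsto\rho^{\pm,\alpha,t}(\cdot,s)$ that is uniform both in $t$ and in the level $r$, since the bounding Poisson processes in \eqref{poisson_bounds} depend on $r$ and range over a continuum of levels. Monotonicity in $r$ and the uniform mass bounds $\rho-\lambda$ and $2VT$ should tame this, but it is the step demanding the most care. A secondary subtlety, already flagged, is that the sandwich must absorb the randomness of the levels $r^\pm$, which is why the uniformity over densities in Proposition \ref{lemma_inv_meas_2} is essential rather than cosmetic.
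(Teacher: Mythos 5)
Your argument is essentially correct, but it identifies the limit by a genuinely different route than the paper, and it carries one dependency that you must make explicit. The paper does \emph{not} derive Proposition \ref{prop_hydro_interface} from the hydrodynamic limit; it derives both simultaneously from the single identity \eqref{good_gamma}, $\gamma(\rho^\alpha)={\mathcal G}_{\lambda,\rho}(v)$, obtained by squeezing the spatially averaged current between the upper bound of Proposition \ref{prop_hydro_riemann_upper} and the lower bound of Proposition \ref{prop_hydro_riemann_lower_average}. Since $\gamma[r(.,.)]$ is an integral over $s$ of $f[r(u+vs+,s)]-vr(u+vs+,s)$, each term of which is at least ${\mathcal G}_{\lambda,\rho}(v)$, equality forces any subsequential limit $\rho^\alpha(u+vs,s)$ to be the minimizer $h(v)$ of \eqref{density_inf} for a.e.\ $s$ and all $v$ outside a countable set, which is exactly the Riemann solution \eqref{def_riemann_sol}; Proposition \ref{prop_hydro_riemann_lower}, hence Corollary \ref{cor_hydro_riemann}, then falls out of the same identity. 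You instead take Corollary \ref{cor_hydro_riemann} as input and recover the interface limit by a two-sided comparison of block averages via \eqref{property_inverse_interface} and the uniform estimate \eqref{ergo_density_2}. This is sound only because the paper's proof of Proposition \ref{prop_hydro_riemann_lower} uses \eqref{good_gamma} and \eqref{current_riemann_lower_average} but never Proposition \ref{prop_hydro_interface}, so the Riemann hydrodynamics is indeed available first; you assert it is ``already established'' without checking this, and since the two results are proved in parallel in Subsection \ref{subsec:proof_riemann} the absence of circularity is a fact about the dependency graph that your write-up must verify, not assume. Granting that, your sandwich and squeeze (using the ordering $\rho^{+,\alpha,x,t}_s\leq\rho^{-,\alpha,y,t}_s$ for $x<y$, window shifts, and continuity of $\rho(\cdot,s)$ off a countable set) is fine, and the uniformity over the random levels $r^\pm$ is correctly attributed to Proposition \ref{lemma_inv_meas_2}. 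The functional upgrade you flag as the main obstacle is already done in the paper: Proposition \ref{prop_limit_rho} gives tightness in $\widetilde{\mathcal M}_{u-VT,u+VT,\rho-\lambda,T}$ precisely by reducing the Poisson domination \eqref{poisson_bounds} to finitely many levels through the Stieltjes-integral/step-function argument \eqref{stieltjes}, so you should cite it rather than redo it. What the paper's route buys is that the interface proposition comes for free from the current estimates that are needed anyway for Theorem \ref{th_hydro}; what your route buys is a more transparent, local-comparison interpretation of the interface as a density profile, at the cost of being logically downstream of the hydrodynamic limit rather than a tool for proving it.
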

\begin{remark}\label{remark_charac}
In fact (as alluded to in the introduction), $x^\alpha(r,.)$ can be 
interpreted as a generalized characteristic for the conservation law 
\eqref{conservation_law}. This will be subtantially developed in a forthcoming paper.
\end{remark}
 To prepare the proof of Proposition \ref{prop_hydro_interface}, 
 we first need a tightness result with respect to the topology introduced above. \\
\begin{proposition}\label{prop_limit_rho}
 For every $T>0$ and $V>1$,\\
 (i) the family of processes 
$(x^{\alpha,t}(.,s))_{s\geq 0}$ is tight in
$\widetilde{\mathcal M}_{\lambda,\rho,2VT,T}$;\\
 (ii) the  family  of processes 
$(\rho^{+,\alpha, t}(.,s),\rho^{-,\alpha, t }(.,s))_{s\geq 0}$ 
is tight in \\
$\widetilde{\mathcal M}_{u-VT,u+VT,\rho-\lambda,T}$, 
and any subsequential weak limit of this  sequence 
is a random pair $(\rho^{+,\alpha}(.,s),\rho^{-,\alpha}(.,s))_{s\geq 0}$ 
of elements of 
$\widetilde{\mathcal M}_{u-VT,u+VT,\rho-\lambda,T}$.  Besides, almost 
surely with respect to the law of this pair, it holds that 
for all $s\geq 0$, $\rho^{+,\alpha}(.,s)=\rho^{-,\alpha}(.,s)=:\rho^{\alpha}(.,s)$ 
a.e.  on $[0,+\infty)$. 
\end{proposition}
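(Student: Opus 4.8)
The plan is to obtain tightness of the rescaled interface process $x^{\alpha,t}$ by an Arzel\`a--Ascoli argument in the path space (the target state space ${\mathcal M}_{\lambda,\rho,2VT}$ being compact), and then to transfer everything to the inverse process $\rho^{\pm,\alpha,t}$ through the bicontinuous involution \eqref{involution}. The first, easy ingredient is a uniform range bound. By the monotonicity \eqref{order_interface_2} we have ${\mathcal X}^{\alpha,\lambda,t}_{ts}\le {\mathcal X}^{\alpha,r,t}_{ts}\le {\mathcal X}^{\alpha,\rho,t}_{ts}$ for all $r\in[\lambda,\rho]$ and $s\le T$, so it suffices to control the two extreme interfaces. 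Using the Poisson domination \eqref{poisson_bounds} and monotonicity of the dominating Poisson processes, ${\mathcal X}^{\alpha,\rho,t}_{ts}-\lfloor ut\rfloor\le {\mathcal N}^{+,\rho,t}_{tT}$ and ${\mathcal X}^{\alpha,\lambda,t}_{ts}-\lfloor ut\rfloor\ge-{\mathcal N}^{-,\lambda,t}_{tT}$. Since $V>1$, the intensity‑$1$ variable ${\mathcal N}^{\pm}_{tT}$ has mean $tT<tVT$, and a Poisson large‑deviation estimate shows that outside probability $e^{-\gamma t}$ (for some $\gamma=\gamma(V,T)>0$) all $x^{\alpha,t}(r,s)=t^{-1}{\mathcal X}^{\alpha,r,t}_{ts}$ lie in $[u-VT,u+VT]$ for $r\in[\lambda,\rho]$, $s\in[0,T]$. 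On this event the total mass of the derivative measure of $x^{\alpha,t}(\cdot,s)$, equal to $x^{\alpha,t}(\rho,s)-x^{\alpha,t}(\lambda,s)$, is at most $2VT$, so the process stays in ${\mathcal M}_{\lambda,\rho,2VT}$.

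The main step, and the main obstacle, is the time‑equicontinuity estimate. Equip ${\mathcal M}_{\lambda,\rho,2VT}$ with the bounded‑Lipschitz metric $d$, which metrizes weak convergence on this compact set. Integration by parts in the variable $r$ gives, for $s,s'\in[0,T]$,
\[
d\big(x^{\alpha,t}(\cdot,s),x^{\alpha,t}(\cdot,s')\big)\le\big|x^{\alpha,t}(\rho,s)-x^{\alpha,t}(\rho,s')\big|+\big|x^{\alpha,t}(\lambda,s)-x^{\alpha,t}(\lambda,s')\big|+\int_\lambda^\rho\big|x^{\alpha,t}(r,s)-x^{\alpha,t}(r,s')\big|\,dr.
\]
The two boundary terms are displacements of the extreme interfaces, again bounded by increments of intensity‑$1$ Poisson processes via \eqref{poisson_bounds}, hence of order $s-s'$. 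The delicate term is the integral, and the observation that makes it uniform in $t$ is the following: although there are uncountably many interfaces, $r\mapsto{\mathcal X}^{\alpha,r,t}_\sigma$ is a step function of total $r$‑mass $\rho-\lambda$, and each interface performs nearest‑neighbour jumps at a rate bounded by the Harris clock rate at the at most two relevant sites; therefore the instantaneous rate at which $r$‑mass is displaced is at most $K(\rho-\lambda)$, uniformly in the configuration and in $t$. Consequently the nondecreasing process
\[
A^t_s:=t^{-1}\int_\lambda^\rho \#\{\mbox{jumps of }{\mathcal X}^{\alpha,r,t}\mbox{ in }[0,ts]\}\,dr \qquad\mbox{satisfies}\qquad \Exp\big[A^t_s-A^t_{s'}\big]\le K(\rho-\lambda)(s-s'),
\]
and since $\int_\lambda^\rho|x^{\alpha,t}(r,s)-x^{\alpha,t}(r,s')|\,dr\le A^t_s-A^t_{s'}$, the $d$‑modulus is controlled by the increments of $A^t$.

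To upgrade this in‑expectation Lipschitz bound to the required uniform modulus of continuity, I would use that the counting integral defining $A^t$ has mean of order $t$ and hence concentrates: the compensator of $A^t$ has predictable part bounded by $K(\rho-\lambda)(s-s')$, while its martingale part has quadratic variation of order $t^{-1}$ (each jump of the counting integral is at most $\rho-\lambda$, and their number is of order $t$), so $A^t_s-A^t_{s'}$ is close to its $t$‑independent mean with high probability. Partitioning $[0,T]$ into blocks of length $\delta$ and using monotonicity of $A^t$ then yields $\lim_{\delta\to0}\limsup_t\Prob\big(\sup_{|s-s'|\le\delta}d(x^{\alpha,t}(\cdot,s),x^{\alpha,t}(\cdot,s'))>\eps\big)=0$ for every $\eps>0$. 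As individual jumps of $s\mapsto x^{\alpha,t}(\cdot,s)$ have $d$‑size $O(t^{-1})$, tightness in the Skorokhod space follows with continuous limit points, proving (i).

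For (ii), since $x^{\alpha,t}(\cdot,s)$ and $\rho^{\pm,\alpha,t}(\cdot,s)$ are generalized inverses of one another (see \eqref{rescaled_interface_1}--\eqref{rescaled_interface_2}), tightness of $(\rho^{+,\alpha,t},\rho^{-,\alpha,t})$ in $\widetilde{\mathcal M}_{u-VT,u+VT,\rho-\lambda,T}$ follows from (i) together with the bicontinuity \eqref{involution} of $\psi\mapsto\psi^{-1}$, which also transports the uniform‑in‑$s$ equicontinuity; the range bound guarantees supports inside $[u-VT,u+VT]$ and mass at most $\rho-\lambda$, so subsequential limits are pairs of elements of the stated space. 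Finally, for the coincidence $\rho^{+,\alpha}=\rho^{-,\alpha}$: for each fixed $t$ and $s$, $\rho^{+,\alpha,t}(\cdot,s)$ and $\rho^{-,\alpha,t}(\cdot,s)$ are precisely the right‑ and left‑continuous generalized inverses of the same nondecreasing function $x^{\alpha,t}(\cdot,s)$, so they differ only on the countable set $t^{-1}\Z$ and carry the same Lebesgue--Stieltjes derivative measure, i.e. define the same element of ${\mathcal M}_{u-VT,u+VT,\rho-\lambda}$. Hence any joint subsequential limit has equal components, giving $\rho^{+,\alpha}(\cdot,s)=\rho^{-,\alpha}(\cdot,s)=:\rho^{\alpha}(\cdot,s)$ a.e., as claimed.
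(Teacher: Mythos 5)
Your proof is correct, and it reaches the key equicontinuity estimate by a genuinely different decomposition than the paper. The paper reduces tightness of $(x^{\alpha,t}(\cdot,s))_s$ to tightness of the scalar processes $s\mapsto x^{\alpha,t}(r,s)$ for finitely many fixed levels $r$, by testing the derivative measure against piecewise constant functions (see \eqref{stieltjes}); for each fixed $r$ it then applies the Poisson domination of Proposition \ref{lemma_interface} \textit{(iv)} together with the Markov property and a Poisson large deviation bound \eqref{poisson_ld} to control the modulus of continuity. You instead work directly with a bounded--Lipschitz metric on ${\mathcal M}_{\lambda,\rho,2VT}$, integrate by parts in $r$, and control the integrated displacement $\int_\lambda^\rho|x^{\alpha,t}(r,s)-x^{\alpha,t}(r,s')|\,dr$ by the aggregate $r$-mass moved by Harris clock rings, whose compensator is Lipschitz in $s$ uniformly in $t$ and whose martingale part has bracket $O(t^{-1})$ because a single clock ring displaces at most $(\rho-\lambda)$ of $r$-mass, each contributing $t^{-1}(\rho-\lambda)$ to a jump of $A^t$. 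Both arguments rest on the same ingredient (nearest-neighbour interfaces jump only at Harris rings at the two adjacent sites, hence the Poisson bounds), but yours avoids the test-function discretization at the cost of a compensator/martingale concentration step, and it has the mild advantage of giving the modulus of continuity for the whole measure-valued path at once rather than level by level. Your treatment of the range bound (large deviations for the extreme interfaces, versus the paper's law of large numbers) and of part \textit{(ii)} (bicontinuity of the involution \eqref{involution}, plus the observation that ${\mathcal R}^{+}$ and ${\mathcal R}^{-}$ define the same derivative measure already at finite $t$) matches the paper's, the latter being stated there only implicitly in the remark following \eqref{inverse_interface_plus}.
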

\begin{proof}{Proposition}{prop_limit_rho}
 Let ${\mathcal Y}^{\alpha,t}_s:=\lfloor ut\rfloor+{\mathcal N}_s^{+,r,t}$ and
${\mathcal Z}^{\alpha,t}_s:=\lfloor ut\rfloor-{\mathcal N}_s^{-,r,t}$. 
By \textit{(i)} and \textit{(iv)} of Proposition \ref{lemma_interface}, 
we have $\rho^{\pm,\alpha,t}(y,s)=\lambda$ for  $y<{\mathcal Z}^{\alpha,t}_s$ and
$\rho^{\pm,\alpha,t}(y,s)=\rho$ for $y>{\mathcal Y}^{\alpha,t}_s$. 
Besides, by the law of large numbers for Poisson processes, 
$t^{-1}{\mathcal Y}^{\alpha,t}_{st}$ and 
$t^{-1}{\mathcal Z}^{\alpha,t}_{st}$ converge in probability respectively 
to $u+s$ and $u-s$. Hence, with probability tending to $1$ as $t\to+\infty$, 
for every $s\in[0,T]$, $x^{\alpha,t}(.,s)$ lies in 
${\mathcal E}_{\lambda,\rho}^{u-Vs,u+Vs}$ 
(thus in ${\mathcal M}_{\lambda,\rho,2Vs}$) and $\rho^{\pm,\alpha,t}(.,s)$ lies in
${\mathcal E}_{\lambda,\rho}^{u-Vs,u+Vs}$ (thus in ${\mathcal M}_{u-Vs,u+Vs,\rho-\lambda}$).   \\ \\
 Remark that \textit{(ii)} follows from \textit{(i)} and \eqref{involution}. 
Now we show  point \textit{(i)}.  
To this end, it is enough to show that for every continuous test function 
$\varphi$ on $[\lambda,\rho]$, the family of processes 
$(x^{\alpha,t}_.(\varphi))_{t\in[0,T]}$ defined by the Stieltjes integral
\[
x^{\alpha,t}_s(\varphi):=\int_\lambda^\rho \varphi(r)x^{\alpha,t}(dr,s)
\]
 is tight. 
Equivalently, we shall show it for piecewise constant functions $\varphi$  of the form
\[
\varphi(r)=\sum_{k=0}^{n-1}\varphi_k 1_{(r_k,r_{k+1}]}(r)
\]
where $n\geq 1$ and $\lambda=r_0<r_1<\cdots r_n=\rho$, since such functions uniformly approximate 
continuous functions on $[\lambda,\rho]$. In this case, we have
\be\label{stieltjes}
x^{\alpha,t}_s(\varphi)=\sum_{k=0}^{n-1}\varphi_k\left[
x^{\alpha,t}(r_{k+1},s)-x^{\alpha,t}(r_k,s)
\right]
\ee
The problem thus boils down to proving that the family $(x^{\alpha,t}(r,.))_{t\geq 0}$ of processes 
 is tight in $C^0([0,T];\R)$. 
By  Proposition \ref{lemma_interface},\textit{(iv)} and Markov property,
for any  $0\leq s$, $({\mathcal X}^{\alpha,r,t}_{s+\tau}-{\mathcal X}^{\alpha,r,t}_s)_{\tau\geq 0}$ 
is stochastically dominated by a rate $1$ Poisson process. 
Hence, for every $\varepsilon>0$ and inverse integer $\delta>0$, 
\begin{eqnarray}
\Prob\left(
\sup_{0\leq s<s'\leq T}|x^{\alpha,t}_{s'}(\varphi)-x^{\alpha,t}_s(\varphi)|>\varepsilon
\right)   & \leq &  \frac{1}{\delta}\Prob\left(
\frac{1}{\delta Tt}{\mathcal P}(\delta Tt)>\frac{\varepsilon}{\delta T}
\right)\nonumber\\
& \leq &  \frac{1}{\delta}e^{-t
I_{\delta T}(\varepsilon)
}\label{poisson_ld}
\end{eqnarray}
where 
\[ 
I_{\delta T}(\varepsilon)
:=
\varepsilon
\ln\frac{\varepsilon}{\delta T}-\varepsilon+\delta T
\]
Inequality \eqref{poisson_ld} follows from cutting the interval $[0,T]$ 
into intervals of length $\delta T$ and using Poisson large deviation bounds. 
Choosing $\delta>0$, we obtain $I_\delta(\varepsilon)>0$, 
hence the modulus of continuity of $x^{\alpha,t}_.$ vanishes in probability as $t\to+\infty$.
\end{proof}
\mbox{}\\ \\
 We conclude this subsection, as announced, with the proof of Proposition \ref{lemma_interface}.  \\
\begin{proof}{Proposition}{lemma_interface}
At time $s=0$, Properties \textit{(i)} and \textit{(iii)} 
hold thanks to \eqref{def_riemann_inversion} and \eqref{ordered_stat}.
 Note that if  $\eta_0^{\alpha,\lambda,\rho,t}(y)=\xi^{\alpha,r}_0(y)$
 for $y\in [a,b]\cap\Z,a,b\in\Z$, then we can take ${\mathcal X}^{\alpha,r,t}_0=y$
 for any $y\in [a-1,b]\cap\Z$. \\ 
We define the evolution of the interface position ${\mathcal X}^{\alpha,r,t}_.$ as follows.
Assume ${\mathcal X}^{\alpha,t,r}_{s-}=x$.
This position is only possibly modified at time $s$ if a clock 
from our Harris construction rings at time $s$ and position $x$ or $x+1$: 
that is, if $\omega(\{(s,w,u,z\})=1$, for some $w\in\{x,x+1\}$, $u\in[0,1]$ and  $z\in\{-1,1\}$, 
where $\omega$ is the Poisson measure defined by \eqref{def_omega}. We then update 
the interface position or not according to the following rules:\\ \\
{\em Case 1.} 
Assume \eqref{case_1}--\eqref{case_12} below hold: 
\be \label{case_1} \ba{l}
\dsp\alpha(w)g\left(\eta^\alpha_{s-}(w)\right)<u\leq\alpha(w)g\left(\xi^{\alpha,r}_{s-}(w)\right),\\ \\
\dsp\eta^{\alpha}_{s-}(w+z)-\xi^{\alpha,r}_{s-}(w+z)=0 
\ea\ee
\be\label{case_12} w=x,\,z=1,
\ee
 By rule \textit{(J2),} this means a  potential jump  from $x$ to $x+1$. Then
we set 
\be\label{go_right}
{\mathcal X}^{\alpha,r,t}_s=x+1
\ee
{\em Case 2.} Assume \eqref{case_2}--\eqref{case_22} below hold: 
\be\label{case_2}
\ba{l}
\alpha(w)g\left(\xi^{\alpha,r}_{s-}(w)\right)<u
\leq\alpha(w)g\left(\eta^\alpha_{s-}(w)\right), \\ \\
\eta^{\alpha}_{s-}(w+z)-\xi^{\alpha,r}_{s-}(w+z)=0,
\ea\ee
\be
\label{case_22} \qquad  w=x+1,\,z=-1
\ee
 By rule \textit{(J2)},  this means a potential jump  from $x+1$ to $x$. Then
we set 
\be\label{go_left}
{\mathcal X}^{\alpha,r,t}_s=x-1
\ee
{\em Case 3.} If neither  \eqref{case_1}--\eqref{case_12} 
nor \eqref{case_2}--\eqref{case_22}  holds, we set
\be\label{stay_there}
{\mathcal X}^{\alpha,r,t}_s=x
\ee
The above rules satisfy property \textit{(ii)}. 
We now prove that they do satisfy the other properties as well.\\ \\
{\em Proof of (i).}\\ 
{\em Case 1.} The first condition in \eqref{case_1} implies that 
a $\xi^{\alpha,r}_{s-}$ particle jumps from  $w=x$ to $w+z=x+1$  at time $s$,
 without being accompanied by a $\eta^{\alpha}_{s-}$ particle.
Since $g$ is nondecreasing, this condition also implies 
$\eta^\alpha_{s-}(x)-\xi^{\alpha,r}_{s-}(x)<0$. 
 After this jump, we have $\eta^\alpha_{s}(x)-\xi^{\alpha,r}_{s}(x)\leq 0$, 
 and due to the second condition in \eqref{case_1}, we also have 
 $\eta^\alpha_{s}(x+1)-\xi^{\alpha,r}_{s}(x+1)<0$. 
 \\ \\
{\em Case 2.} The first condition in \eqref{case_2} implies that 
an $\eta^{\alpha}_{s-}$ particle jumps from  $w=x+1$ to $w+z=x$  at time $s$, 
without being accompanied by a $\xi^{\alpha,r}_{s-}$ particle.
Since $g$ is nondecreasing, this condition also implies 
$\eta^\alpha_{s-}(x+1)-\xi^{\alpha,r}_{s-}(x+1)> 0$.  After this jump,
 we have $\eta^\alpha_{s}(x+1)-\xi^{\alpha,r}_{s}(x+1)\geq 0$, and 
 due to the second condition in \eqref{case_2}, we also have 
 $\eta^\alpha_{s}(x)-\xi^{\alpha,r}_{s}(x)>0$. 
 \\ \\
 In both cases, 
 for any $y\in\Z\setminus\{w,w+z\}$, the sign of 
 $\eta^\alpha_{s}(y)-\xi^{\alpha,r}_{s}(y)$ is the same as that of 
 $\eta^\alpha_{s-}(y)-\xi^{\alpha,r}_{s-}(y)$. Therefore, 
 property \textit{(i)} holds at time $s$, respectively with \eqref{go_right}
 in Case 1, and with \eqref{go_left} in Case 2.  \\ \\
We now consider all possibilities in Case 3.\\ \\
{\em Case (a).}  $w=x$ and the first condition in \eqref{case_1} 
does not hold, or $w=x+1$ and the first condition in  \eqref{case_2}  does not hold.
Since property \textit{(i)} at time $s-$ implies 
$\eta^\alpha_{s-}(x)-\xi^{\alpha,r}_{s-}(x)\leq 0$  and 
$\eta^\alpha_{s-}(x+1)-\xi^{\alpha,r}_{s-}(x+1)\leq 0$, by rules \textit{(J1)--(J3)}, 
 either no 
 particle 
jumps from $w$ to $w+z$, or both an $\eta^\alpha_{s-}$ particle and a 
$\xi^{\alpha,r}_{s-}$ particle  do.
\\ \\
{\em Case (b).}  $w=x$, $z=1$, the first condition in \eqref{case_1} 
holds but not the second one. As in case 1 above, the former condition 
implies $\eta^\alpha_{s-}(x)-\xi^{\alpha,r}_{s-}(x)< 0$. By property \textit{(i)} 
at time $s-$  and the latter condition, 
$\eta^\alpha_{s-}(x+1)-\xi^{\alpha,r}_{s-}(x+1) > 0$. 
Thus, $\eta^\alpha_{s}(x)-\xi^{\alpha,r}_{s}(x)\leq 0$ and 
$\eta^\alpha_{s}(x+1)-\xi^{\alpha,r}_{s}(x+1)\geq 0$.
\\ \\
{\em Case (c).} $w=x+1$, $z=-1$, the first condition in \eqref{case_2} 
holds but not the second one. As in case 2 above, the former condition implies 
$\eta^\alpha_{s-}(x+1)-\xi^{\alpha,r}_{s-}(x+1)>0$. 
By property \textit{(i)} at time $s-$, we must have 
$\eta^\alpha_{s-}(x)-\xi^{\alpha,r}_{s-}(x)\leq 0$. 
Thus, $\eta^\alpha_{s}(x)-\xi^{\alpha,r}_{s}(x)\leq 0$ and 
$\eta^\alpha_{s}(x+1)-\xi^{\alpha,r}_{s}(x+1)\geq 0$.
\\ \\
{\em Case (d).} $w=x$, $z=-1$ and the first condition in \eqref{case_1} holds, 
so that $\eta^\alpha_{s-}(x)<\xi^{\alpha,r}_{s-}(x)$ and a $\xi^{\alpha,r}_{s-}$ 
particle alone jumps from $x$ to $x-1$ at time $s$. 
By property \textit{(i)} at time $s-$, $\eta^\alpha_{s-}(x-1)-\xi^\alpha_{s-}(x-1)\leq 0$. 
At time $s$, we have $\eta^\alpha_{s}(x)-\xi^{\alpha,r}_{s}(x)\leq 0$  and 
$\eta^\alpha_{s}(x-1)-\xi^{\alpha,r}_{s}(x-1)< 0$. 
\\ \\
{\em Case (e).} $w=x+1$, $z=1$ and the first condition in \eqref{case_2} holds, 
so that $\eta^\alpha_{s-}(x+1)-\xi^{\alpha,r}_{s-}(x+1)>0$ and  an $\eta^\alpha_{s-}$ 
particle alone jumps from $x+1$ to $x+2$ at time $s$. 
By property \textit{(i)} at time $s-$, $\eta^\alpha_{s-}(x+2)-\xi^\alpha_{s-}(x+2)\geq 0$. 
At time $s$, we have $\eta^\alpha_{s}(x+1)-\xi^{\alpha,r}_{s}(x+1)\geq 0$ and 
$\eta^\alpha_{s}(x+2)-\xi^{\alpha,r}_{s}(x+2)> 0$. 
 \\ \\
 In Case 3\textit{(a)}, respectively in all the
other subcases of Case 3, for any $y\in\Z$;
respectively  
 for any $y\in\Z\setminus\{w,w+z\}$, the sign of 
 $\eta^\alpha_{s}(y)-\xi^{\alpha,r}_{s}(y)$ is the same as that of 
 $\eta^\alpha_{s-}(y)-\xi^{\alpha,r}_{s-}(y)$. Therefore, 
 property \textit{(i)} holds at time $s$ with \eqref{stay_there}.  \\ \\
{\em Proof of (iii).} We have to prove that, whenever $r<r'$ and 
\be\label{same_interface}
{\mathcal X}^{\alpha,r,t}_{s-}={\mathcal X}^{\alpha,r',t}_{s-}=x,
\ee
then ${\mathcal X}^{\alpha,r,t}_{s}\leq {\mathcal X}^{\alpha,r',t}_{s}$. 
We must thus check that if ${\mathcal X}^{\alpha,r,t}_.$ jumps to the right, 
then ${\mathcal X}^{\alpha,r',t}_.$ does the same, and if 
${\mathcal X}^{\alpha,r',t}_.$ jumps to the left, then ${\mathcal X}^{\alpha,r,t}_.$ 
does the same.  \\ \\
{\em Case 1'.} If ${\mathcal X}^{\alpha,r,t}_.$ jumps  to the right, we are in Case 1 above. 
Since (by \eqref{ordered_stat_later}) $\xi^{\alpha,r'}_{s-}\geq\xi^{\alpha,r}_{s-}$, 
the first condition in \eqref{case_1} is also satisfied for density $r'$, 
and the second condition in \eqref{case_1} for $r$ implies 
$\eta^{\alpha}_{s-}(x+1)\leq\xi^{\alpha,r'}_{s-}(x+1)$. But \eqref{same_interface} 
and property \textit{(i)} at time $s-$ imply 
$\eta^\alpha_{s-}(x+1)\geq\xi^{\alpha,r'}_{s-}(x+1)$, hence 
$\eta^\alpha_{s-}(x+1)=\xi^{\alpha,r'}_{s-}(x+1)$, thus the second condition 
in \eqref{case_1} is also satisfied for $r'$, so that ${\mathcal X}^{\alpha,r',t}$ 
jumps  to the right.  \\ \\
{\em Case 2'}. If ${\mathcal X}^{\alpha,r',t}_.$ jumps  to the left, we are in Case 2 above. 
Since (by \eqref{ordered_stat_later}) $\xi^{\alpha,r'}_{s-}\geq\xi^{\alpha,r}_{s-}$, 
the first condition in \eqref{case_2} is also satisfied for density $r$, 
and the second condition in \eqref{case_2} for  $r'$  implies 
$\eta^{\alpha}_{s-} (x)  \geq\xi^{\alpha,r}_{s-}(x)$. 
But \eqref{same_interface} and
 property \textit{(i)} at time $s-$ imply $\eta^\alpha_{s-}(x)\leq\xi^{\alpha,r}_{s-} (x) $, 
 hence $\eta^\alpha_{s-}(x)=\xi^{\alpha,r}_{s-} (x) $, thus the second condition 
 in \eqref{case_2} is also satisfied for $r$, so that ${\mathcal X}^{\alpha,r,t}$ jumps 
 to the  left.   \\ \\
{\em Proof of (iv).} The way we defined the evolution of ${\mathcal X}_.^{\alpha,r, t}$ 
shows that \eqref{poisson_bounds} holds if we define ${\mathcal N}_.^{\pm, r,  t}$ as follows. 
At time $s=0$, we set
\[
{\mathcal N}_0^{+, r,  t}={\mathcal N}_0^{-, r,  t}=\lfloor ut\rfloor
\]
Then, for the evolution of these processes, 
at time $s$, 
if ${\mathcal N}_{s-}^{+, r, t}=x$,  we set
${\mathcal N}_s^{+, r, t}=x+1$
if and only if $\omega\left(\{(s,x)\}\times[0,1]\times\{1\}\right)=1$, 
otherwise ${\mathcal N}_s^{+, r,  t}=x$.
Similarly, if $\omega\left(\{(s,x)\}\times[0,1]\times\{-1\}\right)=1$, we set 
${\mathcal N}_s^{-, r, t} =x-1 $, otherwise 
${\mathcal N}_s^{-, r, t}=x$. The processes 
$({\mathcal N}_s^{\pm, r, ,t})_{s\geq 0}$ 
defined in this way are Poisson processes with intensity $1$.
\end{proof}
\subsection{Riemann hydrodynamics: proofs of Propositions \ref{prop_hydro_riemann} 
and \ref{prop_hydro_interface}}\label{subsec:proof_riemann}
Recall that  we  assume $\lambda\leq\rho$. The natural analogues of the
propositions below for 
$\lambda\geq\rho$ are proven entirely in the same way.  Recall definitions \eqref{def:xt-yst}
of $x_t$ and $y^t_s$. 
We start proving the easier part of Proposition \ref{prop_hydro_riemann}, that is the upper bound.
\begin{proposition}\label{prop_hydro_riemann_upper}
For every $\lambda\leq\rho\in[0,+\infty)$,  $t>0$ and u,$v\in\R$, 
the following limit holds in probability.
\be\label{current_riemann_upper}
\lim_{t\to+\infty}\left[
t^{-1}\Gamma^\alpha_{y^t_.}(t,\eta^{\alpha,\lambda,\rho,t}_0)-{\mathcal G}_{\lambda,\rho}(v)
\right]^+=0
\ee
\end{proposition}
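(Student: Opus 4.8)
The plan is to obtain the upper bound by a one-sided current comparison: for each fixed $r\in[\lambda,\rho]$, I would bound $\Gamma^\alpha_{y^t_.}(t,\eta_0^{\alpha,\lambda,\rho,t})$ from above by the current of the single pseudo-equilibrium $\xi_0^{\alpha,r}$ of uniform density $r$, show the latter has rescaled limit $f(r)-vr$, and then optimize over $r$. Fix $r\in[\lambda,\rho]$ and set $x_0:=y^t_0=\lfloor ut\rfloor$, the center of the Riemann profile \eqref{def_riemann_inversion_shift} and the initial position of the observer path $y^t_.$. The crucial observation is that $\eta_0^{\alpha,\lambda,\rho,t}$ and $\xi_0^{\alpha,r}$ are ordered in opposite senses on the two sides of $x_0$: for $y>x_0$ one has $\eta_0^{\alpha,\lambda,\rho,t}(y)=\xi_0^{\alpha,\rho}(y)\geq\xi_0^{\alpha,r}(y)$, while for $y\leq x_0$ one has $\eta_0^{\alpha,\lambda,\rho,t}(y)=\xi_0^{\alpha,\lambda}(y)\leq\xi_0^{\alpha,r}(y)$, both by the monotonicity \eqref{ordered_stat} together with $\lambda\leq r\leq\rho$. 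In the notation of Lemma \ref{lemma_current} this means $F_{x_0}(x,\xi_0^{\alpha,r})-F_{x_0}(x,\eta_0^{\alpha,\lambda,\rho,t})\leq 0$ for \emph{every} $x\in\Z$ (checking separately the cases $x>x_0$ and $x\leq x_0$, where the two partial-sum definitions carry opposite signs). Hence the supremum appearing in Lemma \ref{lemma_current} is nonpositive, and applying that lemma with $\zeta_0=\xi_0^{\alpha,r}$, $\zeta_0'=\eta_0^{\alpha,\lambda,\rho,t}$, $t_0=0$ and path $x_.=y^t_.$ yields, outside an event of probability at most $Ce^{-t/C}$,
\[
\Gamma^\alpha_{y^t_.}(t,\eta_0^{\alpha,\lambda,\rho,t})\leq\Gamma^\alpha_{y^t_.}(t,\xi_0^{\alpha,r}).
\]

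Next I would identify the limit of the right-hand side. Using \eqref{difference_currents} to write the moving current as the current across the fixed site $x_0$ minus the mass in the region swept by the observer between times $0$ and $t$, I get
\[
t^{-1}\Gamma^\alpha_{y^t_.}(t,\xi_0^{\alpha,r})=t^{-1}\Gamma^\alpha_{x_0}(t,\xi_0^{\alpha,r})-t^{-1}\,\bigl(\mbox{mass swept by }y^t_.\bigr).
\]
By \eqref{ergo_flux_2} of Proposition \ref{lemma_inv_meas_2} (at the single position $a=u$) the first term converges in probability to $f(r)$, while the swept region is an interval of length $\approx|v|t$ on which the density is $\approx r$ by \eqref{ergo_density_2}, so the correction converges to $vr$ (the sign working out to $vr$ for either sign of $v$, since for $v<0$ the swept interval lies to the left of $x_0$ and enters with the opposite sign). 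Therefore $t^{-1}\Gamma^\alpha_{y^t_.}(t,\xi_0^{\alpha,r})\to f(r)-vr$ in probability, and combining with the previous display gives, for this fixed $r$,
\[
\limsup_{t\to+\infty}\,t^{-1}\Gamma^\alpha_{y^t_.}(t,\eta_0^{\alpha,\lambda,\rho,t})\leq f(r)-vr\qquad\mbox{in probability.}
\]

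Finally, since $r\in[\lambda,\rho]$ is arbitrary and $r\mapsto f(r)-vr$ is continuous on the compact interval $[\lambda,\rho]$ (Lemma \ref{lemma_properties_flux}), I would take the infimum over $r$, which is precisely ${\mathcal G}_{\lambda,\rho}(v)=\inf_{r\in[\lambda,\rho]}\{f(r)-vr\}$; this yields \eqref{current_riemann_upper}. The main obstacle is the sign analysis of the first paragraph: one must recognize that centering the comparison equilibrium's reference point at $x_0=\lfloor ut\rfloor$ makes the Riemann datum dominate $\xi_0^{\alpha,r}$ to the right of $x_0$ and be dominated by it to the left, so that the partial-sum difference $F_{x_0}$ has a fixed sign and Lemma \ref{lemma_current} delivers a genuine one-sided comparison with only an exponentially small error probability. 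Once this is in place, the remainder is a routine application of the uniform equilibrium current and density estimates of Proposition \ref{lemma_inv_meas_2}.
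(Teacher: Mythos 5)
Your proposal is correct and follows essentially the same route as the paper: domination of the Riemann current by the pseudo-equilibrium current $\Gamma^\alpha_{y^t_.}(t,\xi^{\alpha,r}_0)$ via Lemma \ref{lemma_current}, decomposition of the moving current into a fixed-site current minus the swept mass via \eqref{difference_currents}, identification of the limit $f(r)-vr$ through Proposition \ref{lemma_inv_meas_2} (the paper invokes Proposition \ref{current_uniform} directly for the current term, a negligible difference), and optimization over $r\in[\lambda,\rho]$ to reach ${\mathcal G}_{\lambda,\rho}(v)$. Your explicit sign analysis of $F_{x_0}(x,\xi_0^{\alpha,r})-F_{x_0}(x,\eta_0^{\alpha,\lambda,\rho,t})$ is exactly the step the paper leaves implicit, and it is carried out correctly.
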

\begin{proof}{Proposition}{prop_hydro_riemann_upper}
By Lemma \ref{lemma_current} and \eqref{difference_currents} we have,  for $r\in[\lambda,\rho]$, 
\begin{eqnarray}\label{compare_currents_interface}
t^{-1}\Gamma^\alpha_{y^t_.}(t,\eta^{\alpha,\lambda,\rho,t}_0) 
& \leq & t^{-1}\Gamma^\alpha_{y^t_.}(t,\xi^{\alpha,r}_0)\\
t^{-1}\Gamma^\alpha_{y^t_.}(t,\xi^{\alpha,r}_0)
 & = &  t^{-1}\Gamma^\alpha_{x_t-1}(t,\xi^{\alpha,r}_0)-t^{-1}\sum_{x=x_t}^{y_t^t}\xi_t^{\alpha,r}(x)
\label{decomp_currents_interface}
\end{eqnarray}
 By Proposition \ref{current_uniform}  below, the first term on the r.h.s. of
 \eqref{decomp_currents_interface} converges a.s. to the mean current $f(r)$. 
On the other hand, by   \eqref{ergo_density_2} of  
Proposition \ref{lemma_inv_meas_2}, the second term
converges in distribution to $-vr$. This yields \eqref{current_riemann_upper}, recalling definition \eqref{limiting_current_inf} of $\mathcal G_{\lambda,\rho}$.
\end{proof}
\mbox{}\\ \\
We now proceed to the proof of the lower bound in Proposition \ref{prop_hydro_riemann},  which will be carried out in parallel to that of Proposition \ref{prop_hydro_interface}.
\begin{proposition}\label{prop_hydro_riemann_lower}
 For every $\lambda\leq\rho\in[0,+\infty)$,  $t>0$,  $u\in\R$ and $v<1$,  the following limit holds in probability
\be\label{current_riemann_lower}
\lim_{t\to+\infty}\left[
t^{-1}\Gamma^\alpha_{y^t_.}(t,\eta^{\alpha,\lambda,\rho,t}_0)-
{\mathcal G}_{\lambda,\rho}(v)
\right]^-=0
\ee
\end{proposition}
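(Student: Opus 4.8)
The guiding principle is that for the lower bound it suffices to show that the rescaled current along $y^t_.$ is asymptotically no smaller than the pseudo-equilibrium current carried by the \emph{local} density that the interface process reads off near the path. Since for $\lambda\leq\rho$ one has ${\mathcal G}_{\lambda,\rho}(v)=\inf_{r\in[\lambda,\rho]}\{f(r)-vr\}$ by \eqref{limiting_current_inf}, and since this local density necessarily lies in $[\lambda,\rho]$, any value $f(r)-vr$ with $r\in[\lambda,\rho]$ is automatically at least ${\mathcal G}_{\lambda,\rho}(v)$. Thus the matching with ${\mathcal G}_{\lambda,\rho}(v)$ comes for free, and the entire content of \eqref{current_riemann_lower} is a \emph{current replacement} near the path.

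Concretely, I would fix $v<1$ and use the tightness of Proposition \ref{prop_limit_rho} to work along a subsequence of $t\to+\infty$ on which $\rho^{\pm,\alpha,t}$ converges to a common limit $\rho^\alpha$; let $r$ be a continuity level slightly below $\rho^\alpha(u+v,1)$, so that (after rescaling) the interface ${\mathcal X}^{\alpha,r,t}_.$ lies to the left of $y^t_.$ near the final time. Relation \eqref{property_inverse_interface} then sandwiches $\eta^{\alpha,\lambda,\rho,t}_s$ from below by the pseudo-equilibrium $\xi^{\alpha,r}_s$ throughout a neighbourhood of the path. The plan is to compare the two currents across $y^t_.$: the current of $\xi^{\alpha,r}$ across this velocity-$v$ path converges to $f(r)-vr$ by combining the fixed-site flux \eqref{ergo_flux_2} with the density \eqref{ergo_density_2} of Proposition \ref{lemma_inv_meas_2}, and I would transfer this lower bound to $\eta^{\alpha,\lambda,\rho,t}$ using the ordering together with the current-comparison Lemma \ref{lemma_current}. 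To keep the initial discrepancy harmless I would run the comparison over short rescaled time increments, restarting from the current configuration, so that by the finite-propagation content of Lemma \ref{lemma_current} only the \emph{local} ordering around the path enters each increment; summing the increments and then letting $r$ increase to $\rho^\alpha(u+v,1)$ would give $\liminf_t t^{-1}\Gamma^\alpha_{y^t_.}(t,\eta^{\alpha,\lambda,\rho,t}_0)\geq f(\rho^\alpha(u+v,1))-v\,\rho^\alpha(u+v,1)\geq{\mathcal G}_{\lambda,\rho}(v)$.

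This argument is designed to run in parallel with Proposition \ref{prop_hydro_interface}. The same localization shows that the limiting profile $\rho^\alpha(\cdot,s)$ propagates currents equal to the flux of its own local density, i.e.\ it is a weak solution; combined with the upper bound of Proposition \ref{prop_hydro_riemann_upper} — which forces $G(v):=\lim_t t^{-1}\Gamma^\alpha_{y^t_.}\leq{\mathcal G}_{\lambda,\rho}(v)$ and, through \eqref{difference_currents}, the relation $G'(v)=-\rho^\alpha(u+v,1)$, hence an Oleinik-type one-sided bound on the profile — this identifies $\rho^\alpha(\cdot,1)$ (which by Proposition \ref{prop_limit_rho} already has the prescribed boundary values $\lambda$ at $-\infty$ and $\rho$ at $+\infty$) with the self-similar entropy profile $h$ of Proposition \ref{proposition_1}. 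Uniqueness of the latter removes the subsequence and yields both propositions at once.

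The main obstacle is precisely this current replacement under condensation. Unlike the upper bound, where Lemma \ref{lemma_current} yields a clean one-sided estimate, the reverse comparison across an interface of intermediate level is spoiled by the mass that $\eta^{\alpha,\lambda,\rho,t}$ may carry in excess of $\xi^{\alpha,r}$ just to the right of the interface — exactly the condensate produced at slow sites — so that the naive identity between the two interface currents fails by an $O(t)$ amount, and the interface may even cross the path at intermediate times. Controlling this is where the \emph{uniform} (in density, and over mesoscopic space-time windows) pseudo-equilibrium estimates of Proposition \ref{lemma_inv_meas_2} enter, together with the Poissonian interface-displacement bounds of Proposition \ref{lemma_interface}\textit{(iv)}: they ensure that, after the time-localization, the excess mass seen near the path is $o(t)$ and does not degrade the bound.
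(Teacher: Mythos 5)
Your skeleton matches the paper's: time-discretization into increments of length $\delta t$, restarting the comparison with the pseudo-equilibrium read off from the interface at each step, Lemma \ref{lemma_current} for the one-sided current comparison, the uniform estimates of Proposition \ref{lemma_inv_meas_2} for the equilibrium current $f(r)-vr$, the observation that $f(r)-vr\geq{\mathcal G}_{\lambda,\rho}(v)$ for any $r\in[\lambda,\rho]$, and the joint identification of the limit profile with the entropy solution. You also correctly name the obstacle: the comparison degrades by the mass that $\eta$ carries in excess of $\xi^{\alpha,r}$ near the path, i.e.\ by the interface gap $\rho^{+}-\rho^{-}$, and this is uncontrolled when the observer sits on a shock.

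However, your proposed resolution of that obstacle is where the gap lies. You claim that time-localization plus the Poisson displacement bounds of Proposition \ref{lemma_interface}\textit{(iv)} and the uniform pseudo-equilibrium estimates make the excess mass near the path $o(t)$. That is false precisely in the problematic case: if $v$ coincides with a shock speed, the limiting gap $\rho^{+,\alpha}(u+vs,s)-\rho^{-,\alpha}(u+vs,s)$ is of order one, so the excess mass in each comparison window of width $O(\delta t)$ is of order $\delta t$, and summing over the $\sim 2/\delta$ increments leaves an order-one error in the rescaled current that does not vanish as $\delta\to 0$. One cannot choose $v$ (or the level $r$) to avoid shocks, since their location is only known once the hydrodynamic limit is established. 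The paper's missing ingredient is the \emph{spatial average} of the current over $L=\lfloor\varepsilon t\rfloor$ shifted observer paths (the functional $\Gamma^{\alpha,L}_{\lfloor y^t_.\rfloor}$ of \eqref{def_average_gamma} and Proposition \ref{prop_hydro_riemann_lower_average}): after averaging in $i$, the per-step error $\widetilde{\Gamma}^{2,\varepsilon,\delta}_k$ telescopes into the difference of two integrals of the profile over intervals of width $2\delta$ located $\varepsilon$ apart, hence is $O(\delta^2/\varepsilon)$ per step and $O(\delta/\varepsilon)$ in total, which vanishes as $\delta\to 0$ for fixed $\varepsilon$; the cost of the averaging itself is controlled by Lemma \ref{lemma_neglect} and disappears as $\varepsilon\to 0$. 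Without this device (or an equivalent argument exploiting that the discontinuities of the monotone limit profile form a null set), your single-path comparison does not close.
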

Let us summarize the general idea to prove Propositions \ref{prop_hydro_interface}
and \ref{prop_hydro_riemann_lower}. In order to estimate the current across our ``observer'' 
$y^t_.$ travelling at speed $v$, we consider our process $\eta^{\alpha,\lambda,\rho}_.$ at times $ts$ for $s\in[0,1]$ (here, $t\to+\infty$
plays the role of a scaling parameter, and $s\in[0,1]$ that of a macroscopic time variable). We use the interface process to compare our process around 
the observer to 
$\xi^{\alpha,\rho}_{ts}$ with a priori {\em random} density 
$\rho=\rho^{\pm,\alpha,t}(t^{-1}y^t_{ts},s)$, where $t^{-1}y^t_{ts}$ is  the macroscopic  
location of our observer at macroscopic time $s$ (that is microscopic time $ts$).
Roughly speaking, if we are lucky enough that the traveller never sits on a shock, 
the interface processes $\rho^{\pm,\alpha,t}(.,ts)$ 
will be close to each other and not significantly vary in the neighbourhood of $t^{-1}y^t_{ts}$,
which means that the process in the neighbourhood of  $y^t_{ts}$  is close to $\xi^{\alpha,\rho}_{ts}$ for both values
$\rho=\rho^{\pm,\alpha,t}(t^{-1}y^t_{ts},s)$. 
Using the uniform estimates \eqref{ergo_density_2}--\eqref{ergo_flux_2}  from
Proposition \ref{lemma_inv_meas_2},  we can show 
that the instantaneous current across $y^t_.$ is close to 
$f(r)-vr$ for $r=\rho^{\pm,\alpha,t}(t^{-1}y^t_{ts},s)$, which has the desired 
lower bound ${\mathcal G}_{\lambda,\rho}(v)$. 
Since this holds for every $s\in[0,1]$, by integrating the instantaneous current over $s$, we obtain the same bound for the total current between $s=0$ and $s=1$, which is the statement of Proposition \ref{prop_hydro_riemann_lower}.
Besides,  the minimizer in 
${\mathcal G}_{\lambda,\rho}(v)$ is precisely unique outside a shock, 
and equal to the value $\rho(u+vs,s)$ of the entropy solution. Thus
$\rho^{\pm,\alpha,t}(t^{-1}y^t_{ts}.,s)$ must be close to 
$\rho(u+vs,s)$.\\ \\
However, we cannot {\em a priori} discard that $v$ is precisely the speed of a shock, 
because we cannot specify $v$ to avoid shocks {\em before} knowing where they are, 
which would require knowing that the limit we are trying to prove does hold. 
This is why we shall first replace  $\Gamma^\alpha_{y^t_.}$  by $\Gamma^{\alpha,L}_{y^t_.}$ 
defined below in \eqref{def_average_gamma}, that is a local spatial average of 
$\Gamma^\alpha_{y^t_.}$ over the observer's position. Indeed, shocks are isolated, 
so the above argument should be true almost everywhere along this spatial average. 
We point out that our interface-based definition of local equilibrium 
(or pseudo-equilibrium) remains valid at supercritical densities, which is not 
the case for the usual weak-convergence based approach, due to lack of invariant measures.\\ \\ 
Let $L\in\N\setminus\{0\}$.  We define spatial averages of currents as follows. 
\begin{eqnarray}\label{def_average_gamma}
\Gamma^{\alpha,L}_{\lfloor y^t_.\rfloor}(t,\eta^{\alpha,\lambda,\rho,t}_0)
&:=& L^{-1}\sum_{i=0}^{L-1}
\Gamma^{\alpha}_{\lfloor y^t_.+i\rfloor}(t,\eta^{\alpha,\lambda,\rho,t}_0)\\
\label{def_average_gamma_2}
\Gamma^{\alpha,L}_{\lfloor y^t_.\rfloor}(s,s',\eta^{\alpha,\lambda,\rho,t}_0)
&:=& L^{-1}\sum_{i=0}^{L-1}
\Gamma^{\alpha}_{\lfloor y^t_.+i\rfloor}(s,s',\eta^{\alpha,\lambda,\rho,t}_0)
\end{eqnarray}
By \eqref{difference_currents},
\begin{eqnarray}
t^{-1}\Gamma^\alpha_{\lfloor y^t_.\rfloor}(t,\eta^{\alpha,\lambda,\rho,t}_0) & \geq &
t^{-1}\Gamma^{\alpha,L}_{\lfloor y^t_.\rfloor}(t,\eta^{\alpha,\lambda,\rho,t}_0)
\nonumber\\
&-&
t^{-1}L^{-1}\sum_{i=1}^{L-1}\sum_{x=\lfloor ut\rfloor+1}^{\lfloor ut\rfloor+i}\eta^{\alpha,\lambda,\rho,t}_0(x)
\label{observe_eq_loc}
\end{eqnarray}
By next lemma, 
the last term on the r.h.s. of \eqref{observe_eq_loc} can be neglected. 
\begin{lemma}\label{lemma_neglect}
For $L=\lfloor\varepsilon t\rfloor$, it holds that
\be\label{eq_neglect}
\lim_{\varepsilon\to 0}\lim_{t\to+\infty} 
 \Exp  \left\{
t^{-1}L^{-1}
\sum_{i=1}^{L-1}\sum_{x=\lfloor ut\rfloor+1}^{\lfloor ut\rfloor+i} 
\eta^{\alpha,\lambda,\rho,t}_0(x)
\right\}=0
\ee
\end{lemma}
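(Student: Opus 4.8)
The plan is to exploit that the whole double sum sits to the right of the shift $\lfloor ut\rfloor$, where the Riemann datum \eqref{def_riemann_inversion_shift} coincides with the pseudo-equilibrium configuration $\xi_0^{\alpha,\rho}$. First I would note that every index $x$ occurring in \eqref{eq_neglect} satisfies $x\geq\lfloor ut\rfloor+1$, hence $\eta_0^{\alpha,\lambda,\rho,t}(x)=\xi_0^{\alpha,\rho}(x)$ there, so the quantity inside the expectation equals
\[
t^{-1}L^{-1}\sum_{i=1}^{L-1}\sum_{x=\lfloor ut\rfloor+1}^{\lfloor ut\rfloor+i}\xi_0^{\alpha,\rho}(x).
\]
Writing $m=\lfloor ut\rfloor$ and using that each inner sum has at most $L-1$ nonnegative terms, all contained in the single window $\{m+1,\dots,m+L-1\}$, I would bound the double sum crudely by $(L-1)\sum_{x=m+1}^{m+L-1}\xi_0^{\alpha,\rho}(x)$, so that the expression above is at most $t^{-1}\sum_{x=m+1}^{m+L-1}\xi_0^{\alpha,\rho}(x)$.

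Next I would take expectations. For the i.i.d. construction \eqref{def_inversion_0}, each $\xi_0^{\alpha,\rho}(x)$ has law $\theta_{R^{-1}(\rho)}$, whose mean is $R(R^{-1}(\rho))=\rho$ by \eqref{mean_density}; thus $\Exp[\xi_0^{\alpha,\rho}(x)]=\rho$ for every $x$, irrespective of the window location. Consequently the expectation in \eqref{eq_neglect} is bounded above by $t^{-1}(L-1)\rho$. With $L=\lfloor\varepsilon t\rfloor$ this does not exceed $\varepsilon\rho$, so that $\limsup_{t\to+\infty}$ of the (nonnegative) expectation is at most $\varepsilon\rho$; letting $\varepsilon\to 0$ yields the claimed limit $0$.

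The only delicate point, which I expect to be the main obstacle, is the choice of pseudo-equilibrium family: the argument above uses in an essential way that the one-site means are constant and equal to $\rho$. If one instead works with the stationary construction \eqref{def_inversion} (or its supercritical completion \eqref{complete_inversion}--\eqref{complete_inversion_eps}), the means $\Exp[\xi_0^{\alpha,\rho}(x)]=R[\overline{R}^{\,\,-1}(\rho)/\alpha(x)]$ depend on $x$, and one must control their Cesàro average over the moving, growing window $\{m+1,\dots,m+L-1\}$ rather than over a window anchored at the origin. I would resolve this by writing the window average as a difference of two origin-based averages and invoking the two-sided averaging property \eqref{average_afgl} (valid in both directions by Assumption \ref{assumption_ergo}), together with the boundedness of the integrand noted after \eqref{average_afgl_ergodic}. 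Since $m/t\to u$ and $(m+L-1)/t\to u+\varepsilon$, the error terms are $o(t)$ and disappear after division by $t$, so that $t^{-1}\sum_{x=m+1}^{m+L-1}\Exp[\xi_0^{\alpha,\rho}(x)]\to\varepsilon\rho$ as $t\to+\infty$, and the double limit again vanishes.
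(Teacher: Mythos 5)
Your proof is correct and follows essentially the same route as the paper's one-line argument, which simply notes $\eta_0^{\alpha,\lambda,\rho,t}\le\xi_0^{\alpha,\rho}$ and invokes \eqref{limit_ergo_xi}: reduce to the pseudo-equilibrium configuration on the window to the right of $\lfloor ut\rfloor$ and use that its empirical density is $\rho$. Your direct computation of the one-site means (and your discussion of the non-i.i.d.\ constructions via Assumption \ref{assumption_ergo}) is in fact slightly more careful, since \eqref{limit_ergo_xi} is only an in-probability statement while \eqref{eq_neglect} asserts convergence of expectations.
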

\begin{proof}{Lemma}{lemma_neglect}
By   definition \eqref{def_riemann_inversion_shift} of $\eta^{\alpha,\lambda,\rho,t}_0$  
we have 
$\eta^{\alpha,\lambda,\rho,t}_0\leq 
\xi^{\alpha,\rho}_0$. The result then follows from \eqref{limit_ergo_xi}.
\end{proof}
\mbox{}\\ \\
 From now on,  we shall always assume $L=\lfloor\varepsilon t\rfloor$
 for $\varepsilon$ positive but arbitrarily small.  
Propositions \ref{prop_hydro_interface} and \ref{prop_hydro_riemann_lower}
will essentially be consequences of  Proposition \ref{prop_hydro_riemann_lower_average}
below. 
\begin{proposition}\label{prop_hydro_riemann_lower_average}
For $a<b$, $m>0$, $\delta>0$ and $r(.,.)\in\widetilde{\mathcal M}_{a,b,m, T}$, define
\be\label{limcurrent}
\gamma^{\varepsilon,\delta}[r(.,.)]:=\frac{\delta}{2}\sum_{k=0}^{\lfloor 2/\delta\rfloor}
\gamma^{\varepsilon,\delta}_k[r(.,.)]
\ee
where 
\begin{eqnarray}\nonumber\label{limcurrent_k}
\gamma^{\varepsilon,\delta}_k[r(.,.)]&:=&\frac{1}{\varepsilon}\int_0^\varepsilon \left\{
f\left[r\left(u+\frac{vk\delta}2+z-\delta,\frac{k\delta}2\right)\right]\right.\\
&&\qquad\qquad\qquad\left. -vr\left(u+\frac{v k\delta}2+z-\delta,\frac{k\delta}2\right)
\right\}dz
\end{eqnarray}
Then,  for every $\varepsilon>0$, $h>0$,  
$\lambda,\rho\in[0,+\infty)$, $t>0$ and u,$v\in\R$, 
\be\label{current_riemann_lower_average}
\lim_{\delta\to 0}\limsup_{t\to+\infty}\Prob\left(
t^{-1}\Gamma^{\alpha,\lfloor\varepsilon t\rfloor}_{y^t_.}(t,\eta^{\alpha,\lambda,\rho,t}_0)<
\gamma^{\varepsilon,\delta}[\rho^{-,\alpha,t}(.,.)]-h
\right)=0
\ee
\end{proposition}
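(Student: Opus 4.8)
The plan is to prove \eqref{current_riemann_lower_average} by a time-discretization argument in which the current across the moving observer is split into increments over short macroscopic time steps, and on each step the system is compared, in a neighbourhood of the observer, to a pseudo-equilibrium whose density is the one read off from the interface process. First I would partition the macroscopic time interval $[0,1]$ into the steps $[k\delta/2,(k+1)\delta/2]$ appearing in \eqref{limcurrent}, set $\tau_k:=kt\delta/2$, and use additivity of the current together with the Markov property to write $t^{-1}\Gamma^{\alpha,\lfloor\eps t\rfloor}_{y^t_.}(t,\eta^{\alpha,\lambda,\rho,t}_0)$ as the sum over $k$ of the increments $t^{-1}\Gamma^{\alpha,\lfloor\eps t\rfloor}_{y^t_.}(\tau_k,\tau_{k+1},\eta^{\alpha,\lambda,\rho,t}_0)$. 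On each step I would further decompose the moving-observer current into a fixed-site current at the starting position $y^t_{\tau_k}$ minus (for $v\ge0$; plus, for $v<0$) the mass swept by the observer, exactly as in \eqref{decomp_currents_interface} via \eqref{difference_currents}.

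The core of the argument is the lower bound on the fixed-site increment. Set $r_k:=\rho^{-,\alpha,t}$ evaluated slightly to the left of the observer's macroscopic position at the start $s=k\delta/2$ of the step, as in the argument of $\gamma^{\varepsilon,\delta}_k$ in \eqref{limcurrent_k}; the $-\delta$ offset guarantees, using the interface ordering \eqref{order_interface_2} and the definition \eqref{inverse_interface_minus} of $\mathcal R^{-,\alpha,x,t}$, that the interface $\mathcal X^{\alpha,r_k,t}$ sits strictly to the left of the observer. By the Poisson bounds \eqref{poisson_bounds} of Proposition \ref{lemma_interface}, part (iv), this interface moves by at most order $t\delta$ over the step, hence (since $v<1$) remains to the left of the finite-propagation window of the observer throughout $[\tau_k,\tau_{k+1}]$; consequently, by \eqref{interface_property_2}, the true configuration dominates the pseudo-equilibrium $\xi^{\alpha,r_k}$ in that window. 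Applying the current comparison Lemma \ref{lemma_current} (whose window is that of Lemma \ref{lemma_finite_prop}) to $\zeta_0=\eta^{\alpha,\lambda,\rho,t}_{\tau_k}$ and $\zeta'_0=\xi^{\alpha,r_k}_{\tau_k}$ then gives $\Gamma^\alpha_{y^t_{\tau_k}}(\tau_k,\tau_{k+1},\eta_{\tau_k})\ge \Gamma^\alpha_{y^t_{\tau_k}}(\tau_k,\tau_{k+1},\xi^{\alpha,r_k}_{\tau_k})$ up to an error equal to the rightward excess mass of $\eta$ over $\xi^{\alpha,r_k}$ across that window. It is precisely the choice of the \emph{lower} field $\rho^{-}$ that makes the relevant supremum in Lemma \ref{lemma_current} a rightward partial sum controllable by this excess mass.

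I would then evaluate the pseudo-equilibrium increment using Proposition \ref{lemma_inv_meas_2}: the fixed-site contribution is $\frac{t\delta}{2}f(r_k)+o(t)$ by \eqref{ergo_flux_2}, applied over $[\tau_k,\tau_{k+1}]$ through the pseudo-equilibrium (macroscopic) time-invariance of $\xi^{\alpha,r_k}$; and the swept mass of $\eta$ is bounded above, via the sandwiching \eqref{property_inverse_interface} and \eqref{ergo_density_2}, by $v\,\frac{t\delta}{2}\,r_k+o(t)$ plus a term proportional to the local gap $\rho^{+,\alpha,t}-\rho^{-,\alpha,t}$. Together with the spatial average over $i=0,\ldots,\lfloor\eps t\rfloor-1$, which passes in the limit to the factor $\frac1\eps\int_0^\eps dz$ in \eqref{limcurrent_k}, summing the increments over $k$ reproduces $\gamma^{\varepsilon,\delta}[\rho^{-,\alpha,t}(.,.)]$; the uniformity in density of \eqref{ergo_density_2}--\eqref{ergo_flux_2} is what permits their simultaneous application to the random, space- and time-dependent densities $r_k\le\rho_0$.

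The main obstacle is the control of the accumulated error terms under $\limsup_{t\to\infty}$ followed by $\delta\to0$. The per-step excess mass, and the per-step gap $\rho^{+,\alpha,t}-\rho^{-,\alpha,t}$, are both proportional to the difference between the local density of $\eta$ near the observer and the value $r_k$ read to its left; this difference can be of order one exactly at a shock, and after summing the roughly $2/\delta$ steps one must show the total is negligible. This is where the spatial average built into $\Gamma^{\alpha,\lfloor\eps t\rfloor}$ is essential: shocks are isolated, so for almost every shift $z\in[0,\eps]$ the observer avoids them, the two interface fields are close there (as established in Proposition \ref{prop_limit_rho}, part (ii)), and the $\delta$-offset makes $r_k$ approximate the true local density up to order $\delta$. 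The same estimates run in parallel with, and feed, the interface convergence asserted in Proposition \ref{prop_hydro_interface}.
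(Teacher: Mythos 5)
Your proposal follows essentially the same route as the paper's proof: the same time discretization into steps of length $t\delta/2$, the same reduction of the moving-observer current to a fixed-site current minus swept mass via \eqref{difference_currents}, the same interface-based sandwiching by pseudo-equilibria read off $\rho^{\pm,\alpha,t}$ with a $\delta$-offset, the same application of Lemma \ref{lemma_current} and of the uniform estimates of Proposition \ref{lemma_inv_meas_2}, and the same identification of the error as the $\rho^{+}-\rho^{-}$ gap tamed by the spatial average over $i=0,\dots,\lfloor\varepsilon t\rfloor-1$. The one point you leave heuristic (``shocks are isolated, so almost every shift avoids them'') is made quantitative in the paper by a telescoping bound on the monotone profile, which gives a per-step error at most $2\rho\delta^{2}(2V+v)/\varepsilon$ and hence a total of order $\delta/\varepsilon$ over the $\lfloor 2/\delta\rfloor$ steps, vanishing as $\delta\to 0$.
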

 Before proving Proposition \ref{prop_hydro_riemann_lower_average}, 
we prove that it implies Proposition \ref{prop_hydro_interface} 
and Proposition \ref{prop_hydro_riemann_lower}.\\ 
\begin{proof}{Propositions}{prop_hydro_interface} 
{\em and \ref{prop_hydro_riemann_lower}}.
In the following, using Proposition \ref{prop_limit_rho}, we consider 
any sequence of values of $t$ tending to $+\infty$ along which the process 
$(\rho^{+,\alpha, t}(.,s),\rho^{-,\alpha, t }(.,s))_{s\geq 0}$ 
converges in law to some $(\rho^\alpha(.,s))_{s\geq 0}$. It will be implicit in the notation
that $t\to+\infty$ will mean a limit along this subsequence.  \\ 
Propositions \ref{prop_hydro_riemann_upper}, \ref{prop_hydro_riemann_lower_average}, 
Lemma \ref{lemma_neglect} and \eqref{observe_eq_loc} imply that, for any $h>0$,
\be\label{near_min}
\lim_{\varepsilon\to 0}\lim_{\delta\to 0}\limsup_{t\to+\infty}\Prob\left(
\gamma^{\varepsilon,\delta}[\rho^{-,\alpha,t}(.,.)]>{\mathcal G}_{\lambda,\rho}(v)+h
\right)=0
\ee
Since $\gamma^{\varepsilon,\delta}$ is a continuous functional, 
and $\rho^\alpha(.,s)\in\widetilde{\mathcal M}_{a,b,m}$ 
for some $a,b,m$, $\gamma^{\varepsilon,\delta}(\rho^{-,\alpha,t})$ 
converges in law as $t\to+\infty$ to 
$\gamma^{\varepsilon,\delta}(\rho^{\alpha})$ 
and $\gamma^{\varepsilon,\delta}(\rho^{\alpha})$ converges a.s. as $\delta\to 0$ 
to $\gamma^{\varepsilon}(\rho^{\alpha})$,  where $\gamma^\varepsilon$ 
is defined on $\widetilde{\mathcal M}_{a,b,m}$ by 
\be \label{limcurrent_2}\quad
%
\gamma^{\varepsilon}(r)  
:= \frac{1}{\varepsilon}
\int_0^1\int_0^\varepsilon
 \left\{
f[r(u+vs+z-,s)] - v r(u+vs+z- ,s) 
\right\}dzds 
\ee
Finally, since $\rho^\alpha\in\widetilde{\mathcal M}_{a,b,m}$, 
$\gamma^\varepsilon(\rho^\alpha)$ converges a.s. (with respect to the 
distribution of the random function $\rho^\alpha$) 
as $\varepsilon\to 0$ to $\gamma(\rho^\alpha)$, where $\gamma(.)$ is defined by
\be\label{limcurrent_3}
\gamma(r):=\int_0^1 \left\{
f[r(u+vs+,s)]-vr(u+vs+,s)
\right\}ds
\ee
It follows from \eqref{near_min} and definition \eqref{limiting_current_inf} 
of $\mathcal G_{\lambda,\rho}(v)$ that 
\be\label{good_gamma}
\gamma(\rho^\alpha)={\mathcal G}_{\lambda,\rho}(v) \mbox{ a.s.}
\ee
Equality \eqref{good_gamma}, together with \eqref{limiting_current_inf}, 
\eqref{density_inf} and \eqref{def_riemann_sol}, implies \textit{(i)} 
of Proposition \ref{prop_hydro_interface}. The latter combined with 
\eqref{involution} implies \textit{(ii)} of Proposition \ref{prop_hydro_interface}. Finally,
\eqref{good_gamma}  and \eqref{current_riemann_lower_average}  establish Proposition \ref{prop_hydro_riemann_lower}.
\end{proof}
\mbox{}\\
\begin{proof}{Proposition}{prop_hydro_riemann_lower_average}
We shall compute 
$\Gamma^{\alpha,L}_{\lfloor y^t_.\rfloor}(t,\eta^{\alpha,\lambda,\rho,t}_0)$ by decomposing the time interval 
$[0,t]$ into subintervals of length $t\delta/2$ denoted by $tI_k:=[tk\delta/2,t(k+1)\delta/2)=[ts_k,ts_{k+1})$ for 
$k=0,\ldots,K-1$ for 
\be\label{number_intervals}
K:=\lfloor 2/\delta\rfloor
\ee
and a last interval $tI_K:=[tK\delta/2,t]=[ts_K,ts_{K+1}]$, where $ts_{K+1}=t$. We thus write
\be\label{decomp_current}
\Gamma^{\alpha,L}_{\lfloor y^t_.\rfloor}(t,\eta^{\alpha,\lambda,\rho,t}_0)=\sum_{k=0}^{K}
\Gamma^{\alpha,L}_{\lfloor y^t_.\rfloor}(ts_k,ts_{k+1},\eta^{\alpha,\lambda,\rho,t}_{ts_k})
\ee
In the sequel, for notational simplicity, we shall write (for $i=0,\ldots,L-1$)
\be\label{for_simplicity}
\ba{lll}
\rho^{-}_{k,i} & : = & \rho^{-,\alpha,t}\left(t^{-1}y^t_{ts_k}+t^{-1}i-\delta,s_k\right)-\delta\\
\rho^{+}_{k,i} & : = & \rho^{+,\alpha,t}\left(t^{-1}y^t_{ts_k}+t^{-1}i+\delta,s_k\right)+\delta\\
\xi_{k,i}^\pm & := & \xi_{ts_k}^{\rho^\pm_{k,i}}\\
\xi_{k,k+1,i}^\pm  & := & \xi_{ts_{k+1}}^{\rho^\pm_{k,i}} \\
 \eta_k & := & \eta^{\alpha,\lambda,\rho,t}_{ts_k} 
\ea\ee
These processes represent a discretization in our analysis.  
By \eqref{property_inverse_interface}--\eqref{densities_inverse_interface}
and \eqref{rescaled_interface_1}--\eqref{rescaled_interface_2}, we have 
\be\label{sandwich_interface}
\xi_{k,i}^-(x)\leq
\eta^{\alpha,\lambda,\rho,t}_{ts_k}(x)\leq
\xi_{k,i}^+(x)
\ee
for every $i\in\{0,\ldots,L-1\}$ and $x\in\mathcal U$, where
\be\label{name_interval_0}
\mathcal U:=(y^t_{ts_k}-t\delta+i,y^t_{ts_k}+t\delta+i)
\ee
By Lemma \ref{lemma_current}, there is an event $E_t^{\varepsilon,\delta}$ 
with probability tending to $1$ as $t\to+\infty$, on which the following holds 
for every $k=0,\ldots,K$ and $i=0,\ldots,\lfloor\varepsilon t\rfloor$, $V\in(1,2)$ 
and $v\in\R$ such that $V+v<2$:
\begin{eqnarray}\label{compare_current_eq}
& & \Gamma^{\alpha}_{\lfloor y^t_.+i\rfloor}(ts_k,ts_{k+1},\eta_k)-
\Gamma^{\alpha}_{\lfloor y^t_.+i\rfloor}(ts_k,ts_{k+1},\xi_{k,i}^-)\\
\nonumber& \geq & -0\vee\max\left\{
F_{y^t_{ts_k}+i}(\eta_k,x)-
F_{y^t_{ts_k}+i}(\xi_{k,i}^-,x):\,
x\in\mathcal V
\right\}\\
\nonumber & \geq & -\sum_{x=y^t_{ts_k}+i-V\delta/2}^{y^t_{ts_k}+i+(V+v)\delta/2}\left[
\xi_{k,i}^+(x)-\xi_{k,i}^-(x)
\right]
\end{eqnarray}
where
\be\label{name_interval}
\mathcal V:=[y^t_{ts_k}+i-Vt\delta/2,y^t_{ts_k}+i+(V+v)t\delta/2]
\ee
Notice that, thanks to the condition $V+v<2$, the interval $\mathcal V$ 
defined by \eqref{name_interval} is indeed contained in the interval 
$\mathcal U$ defined by \eqref{name_interval_0}.
Thus, on $E_t^{\varepsilon,\delta}$, 
\begin{eqnarray}\label{thus}
t^{-1}\Gamma^{\alpha,\lfloor\varepsilon t\rfloor}_{\lfloor y^t_.\rfloor}(ts_k,ts_{k+1},\eta_k) & \geq &
 \Gamma^{1,\varepsilon,\delta}_k(t)-
\Gamma^{2,\varepsilon,\delta}_{k}(t)
\end{eqnarray}
where
\begin{eqnarray*}
\Gamma^{1,\varepsilon,\delta}_k(t)
& := &
\frac{1}{\lfloor\varepsilon t\rfloor}\sum_{i=0}^{\lfloor\varepsilon t\rfloor-1}
t^{-1}\Gamma^{\alpha}_{\lfloor y^t_.+i\rfloor}(ts_k,ts_{k+1},\xi_{k,i}^-)\\
\Gamma^{2,\varepsilon,\delta}_{k}(t)
& := &
 \frac{1}{\lfloor\varepsilon t\rfloor}\sum_{i=0}^{\lfloor\varepsilon t\rfloor-1}
t^{-1}\sum_{x=y^t_{ts_k}+i- t\delta/2}^{y^t_{ts_k}+i+(V+v) t\delta/2}\left[
\xi_{k,i}^+(x)-\xi_{k,i}^-(x)
\right]
\end{eqnarray*}
There, $\Gamma^{1,\varepsilon,\delta}_k(t)$ is the essential item, 
that is the current through the expectedly close process $\xi^{\alpha,\rho_{k,i}^-}_.$ 
whose limiting value is given by 
Corollary \ref{corollary_ab} (mind here that $\rho_{k,i}^-$ is a {\em random variable}).
On the other hand,
$\Gamma^{2,\varepsilon,\delta}_{k}(t)$ is the error, which is controlled 
(see \eqref{second_quantity} below)
by the microscopic jump of the interface process. The latter is negligible 
in the absence of a macroscopic shock, so it will be negligible after spatial averaging. 
Below we replace these terms by their ``main'' values, which are functions of the 
interface processes $\rho^{\pm,\alpha,t}$ rather than the particle processes. \\ \\ 
For $\Gamma^{1,\varepsilon,\delta}_k(t)$ in \eqref{thus} we write, 
for each $i=0,\ldots,\lfloor\varepsilon t\rfloor-1$,  using \eqref{difference_currents}, 
\begin{eqnarray}\nonumber
t^{-1}\Gamma^\alpha_{y^t_.+i}(ts_k,ts_{k+1},\xi^-_{k,i}) & = &
t^{-1}\Gamma^\alpha_{ y_{ts_k}^t+i}(ts_k,ts_{k+1},\xi^-_{k,i})
\\&&\qquad -t^{-1}
\sum_{x=y_{ts_k}^t+i+1}^{y_{ts_{k+1}}^t+i}\xi^-_{k,k+1}(x)
 \label{thus_one}
\end{eqnarray}
Hence,  by  Corollary \ref{corollary_ab},
%
\begin{eqnarray}\label{hence}
\Gamma^{1,\varepsilon,\delta}_k(t) & \geq & 
\widetilde{\Gamma}^{1,\varepsilon,\delta}_k(t)-e^{\varepsilon,\delta}_k(t)
\end{eqnarray}
where $e^{\varepsilon,\delta}_k(t)\to 0$ in probability as $t\to+\infty$, and 
\begin{eqnarray}
\widetilde{\Gamma}^{1,\varepsilon,\delta}_k(t) &
:= & \frac{1}{\lfloor\varepsilon t\rfloor-1}\sum_{i=0}^{\lfloor\varepsilon t\rfloor}\left[
f(\rho_{k,i}^-)-v\rho_{k,i}^-
\right]\nonumber\\
& = & \frac{1}{\varepsilon}\int_0^\varepsilon \left\{
f[\rho^{-,\alpha,t}(t^{-1}y^t_{ts_k}+z-\delta,s_k)]\right.\nonumber\\
&  &  \phantom{\frac{1}{\varepsilon}\int_0^\varepsilon\left\{\right.}\left.
-v\rho^{-,\alpha,t}(t^{-1}y^t_{ts_k}+z-\delta,s_k)
\right\}dz\nonumber\\
 & = & \gamma^{\varepsilon,\delta}_k(\rho^{-,\alpha,t})\label{limit_thus_one}
\end{eqnarray}
To obtain the first line of \eqref{limit_thus_one}, we applied  \eqref{ergo_flux_2_random} to the first line on the r.h.s. of \eqref{thus_one}, and  \eqref{ergo_density_2_random} to the second line on the r.h.s. of \eqref{thus_one}. To obtain the second line of \eqref{limit_thus_one},
 observe that by \eqref{for_simplicity}, the integrand on the second line 
 of \eqref{limit_thus_one} is  constant on intervals of
length $1/t$, and the first line is the corresponding Riemann sum.  
 For $\Gamma^{2,\varepsilon,\delta}_k(t)$ in \eqref{thus}, 
by Proposition \ref{lemma_inv_meas_2}, for each $k=0,\ldots,K$,
\be\label{replace_gamma2}
\lim_{t\to +\infty}\left[\Gamma^{2,\varepsilon,\delta}_{k}(t)-
\widetilde{\Gamma}^{2,\varepsilon,\delta}_{k} (t)\right]=0\quad
\mbox{in probability}
\ee
where 
\begin{eqnarray}
\widetilde{\Gamma}^{2,\varepsilon,\delta}_{k}(t) 
& := &\frac{\delta (2V+v)}{2\lfloor\varepsilon t\rfloor}
\sum_{i=0}^{\lfloor\varepsilon t\rfloor-1}
\left[
\rho_{k,i}^+-\rho_{k,i}^-
\right]\nonumber\\
&= & \frac{\delta (2V+v)}{2\varepsilon}\int_0^\varepsilon\left[
\rho^{+,\alpha,t}\left(t^{-1}y^t_{ts_k}+z+\delta,s_k\right)\right.\nonumber\\
&& \phantom{\frac{\delta (2V+v)}{2\varepsilon}\int_0^\varepsilon}\left.
-\rho^{-,\alpha,t}\left(t^{-1}y^t_{ts_k}+z-\delta,s_k\right)
\right]dz\nonumber\\
& + & \delta^2 (2V+v)\nonumber\\
& = & \frac{\delta (2V+v)}{2\varepsilon}\left\{
\int_{\varepsilon-\delta}^{\varepsilon+\delta}\rho^{\alpha,t}\left(u+v s_k+z,s_k\right)dz\right.\nonumber\\
&&\phantom{\frac{\delta (2V+v)}{2\varepsilon}}\left.-\int_{-\delta}^\delta\rho^{\alpha,t}\left(u+v s_k+z,s_k\right)dz
\right\}\nonumber\\
& + & \delta^2 (2V+v)\nonumber\\
& \leq & \frac{2\rho\delta^2(2V+v)}{\varepsilon}
+ \delta^2 (2V+v)\label{second_quantity}
\end{eqnarray}
 is the ``main part'' of ${\Gamma}^{2,\varepsilon,\delta}_{k}(t)$. 
Note that in the third equality, we replaced the functions $\rho^{\pm,\alpha,t}$ 
with $\rho^{\alpha,t}$, as all these functions coincide a.e. in space (see \eqref{common_value}). 
Hence
\be\label{third_quantity}
\sum_{k=0}^K\widetilde{\Gamma}^{2,\varepsilon,\delta}_{k}(t)\leq
\frac{2\rho\delta (2V+v)}{\varepsilon}+\delta(2V+v) 
\ee
By \eqref{decomp_current} and \eqref{thus}, 
\begin{eqnarray*}
 t^{-1}  \Gamma^{\alpha,L}_{\lfloor y^t_.\rfloor}(t,\eta^{\alpha,\lambda,\rho,t}_0) & = & \sum_{k=0}^{K}
 t^{-1} \Gamma^{\alpha,L}_{\lfloor y^t_.\rfloor}(ts_k,ts_{k+1},\eta^{\alpha,\lambda,\rho,t}_{ts_k})\\
& \geq & 
\sum_{k=0}^K 
{\Gamma}^{1,\varepsilon,\delta}_k(t)
-
\sum_{k=0}^K {\Gamma}^{2,\varepsilon,\delta}_k(t)
\end{eqnarray*}
Hence, 
the conclusion follows from 
\eqref{hence}, \eqref{limit_thus_one},  \eqref{replace_gamma2} and \eqref{third_quantity}.
\end{proof}
 \subsection{Proof of Proposition \ref{lemma_inv_meas_2}}\label{subsec:current_pseudo}
This subsection is devoted to the proof of Proposition \ref{lemma_inv_meas_2}. 
The proof will be carried out in two steps. The main step will be to prove
a nonuniform result for the asymptotic current at a given density, that we now state.
\begin{proposition}\label{current_uniform}
Let $\eta_0\in\mathbf{X}$ be an initial (deterministic or random) configuration satisfying,
 for some $\rho\in[0,+\infty)$, 
\be\label{assumption_uniform}
\lim_{n\to+\infty}\frac{1}{n}\sum_{x=0}^n\eta_0(x)=\lim_{n\to+\infty}\frac{1}{n}\sum_{x=-n}^0\eta_0(x)=\rho
\ee
 in probability. 
Then, 
for any $x\in\R$ and $t>0$, the following limit holds in probability 
with respect to the law of the 
process:
\begin{eqnarray}
\lim_{N\to+\infty}N^{-1}\Gamma^\alpha_{\lfloor Nx\rfloor}(Nt,\eta_0) & = & tf(\rho)\label{limit_current}
\end{eqnarray}
\end{proposition}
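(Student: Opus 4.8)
The plan is to establish matching asymptotic lower and upper bounds for $N^{-1}\Gamma^\alpha_{\lfloor Nx\rfloor}(Nt,\eta_0)$, both equal to $tf(\rho)$, treating the subcritical range $\rho<\rho_c$ and the supercritical range $\rho\geq\rho_c$ separately, since $f$ is strictly increasing below $\rho_c$ and constant, equal to $(p-q)c$, above. The common first step is to upgrade the hypothesis \eqref{assumption_uniform} to a uniform statement: if $\frac1n\sum_{x=0}^n\eta_0(x)\to\rho$ then $\sup_{0\leq m\leq n}|\sum_{x=0}^m\eta_0(x)-\rho m|=o(n)$, and likewise on the left. Writing $x_0=\lfloor Nx\rfloor$, the cumulative mass function $F_{x_0}(\cdot,\eta_0)$ of Lemma \ref{lemma_current} then satisfies $F_{x_0}(x,\eta_0)=\rho(x-x_0)+o(N)$ uniformly over any window $|x-x_0|\leq CN$.

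\emph{Subcritical case.} Here I would replace $\eta_0$ by a stationary configuration $\zeta_0\sim\mu^{\alpha,\rho}$. By Lemma \ref{lemma_inter} (uniformized as above, using the law of large numbers for the product measure $\mu^{\alpha,\rho}$), $F_{x_0}(\cdot,\zeta_0)$ also equals $\rho(\cdot-x_0)+o(N)$ uniformly on $O(N)$ windows, so $\sup_{|x-x_0|\leq CN}|F_{x_0}(x,\eta_0)-F_{x_0}(x,\zeta_0)|=o(N)$ in probability. Applying Lemma \ref{lemma_current} in both directions (with $t_0=0$, horizon $Nt$, speed $V$) bounds $|\Gamma^\alpha_{x_0}(Nt,\eta_0)-\Gamma^\alpha_{x_0}(Nt,\zeta_0)|$ by $o(N)$ off an event of vanishing probability, reducing the claim to the stationary current. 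For the latter I would use the compensator decomposition $\Gamma^\alpha_{x_0}(Nt,\zeta_0)=M_{Nt}+\int_0^{Nt}\big[p\alpha(x_0)g(\eta_s(x_0))-q\alpha(x_0+1)g(\eta_s(x_0+1))\big]ds$, with $M$ a martingale. Its predictable bracket is bounded by $Nt$ (since $g,\alpha\leq 1$ and $p+q=1$), so $\Exp[M_{Nt}^2]=O(Nt)$ and $N^{-1}M_{Nt}\to 0$ in $L^2$. The integrand is a bounded local function whose $\mu^{\alpha,\rho}$-expectation equals $f(\rho)$ by \eqref{standard_current}, \eqref{mean_rate} and \eqref{def_flux}; as $\mu^{\alpha,\rho}$ is an extremal invariant measure the stationary process is time-ergodic, giving $\frac1{Nt}\int_0^{Nt}(\cdots)\,ds\to f(\rho)$ and hence $N^{-1}\Gamma^\alpha_{x_0}(Nt,\zeta_0)\to tf(\rho)$.

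\emph{Supercritical case.} The upper bound $\limsup N^{-1}\Gamma\leq t(p-q)c$ would follow from domination by the source: $\Gamma^\alpha_{x_0}(Nt,\eta_0)\leq\Gamma^\alpha_{x_0}(Nt,\eta^{*,x_0})$ by Corollary \ref{corollary_consequence}, while Proposition \ref{current_source} (with $T=Nt$, $x_T=x_0$) controls the right-hand side by $(p-q)c$ up to the boundary correction $p[\alpha(x_0)-c]$. To remove this correction I would measure instead across a slow site: under Assumption \ref{assumption_dense} there is, within distance $o(N)$ of $x_0$, a site $x_*$ with $\alpha(x_*)$ arbitrarily close to $c$, and by \eqref{difference_currents} the currents across $x_0$ and $x_*$ differ only by the intervening mass, which is $o(N)$. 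For the lower bound I would exploit that a supercritical profile keeps the slow sites near $x_0$ saturated, so that such a site emits at its near-maximal rate $(p-q)c$ over $[0,Nt]$; combined again with \eqref{difference_currents} this gives $\liminf N^{-1}\Gamma\geq t(p-q)(c-\varepsilon)$ for every $\varepsilon>0$.

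\emph{Main obstacle.} The genuine difficulty is the supercritical lower bound: no invariant measure of density $\rho\geq\rho_c$ exists, and the current-comparison Lemma \ref{lemma_current} is useless across a macroscopic density gap, so neither the equilibrium reduction nor monotone coupling applies. One must argue directly that the condensed mass feeds the densely distributed slow sites until their throughput attains the bottleneck value $(p-q)c$ — precisely the point where Assumption \ref{assumption_dense} is indispensable and where the condensation mechanism enters. By contrast the subcritical side is comparatively routine, its only nontrivial input being the decorrelation (ergodicity) of the stationary disordered dynamics used in the time-ergodic theorem.
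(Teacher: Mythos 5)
Your treatment of the subcritical case and of the supercritical upper bound is essentially sound and close to the paper's. For $\rho<\rho_c$ the paper also reduces to the stationary initial condition via Lemma \ref{lemma_current} (this is its Lemma \ref{lemma_observe}), but then simply invokes \cite[Lemma 4.10]{bmrs2} for the equilibrium current, whereas you propose a martingale decomposition plus the time-ergodic theorem. That route is viable but rests on two inputs you only assert: (a) extremality, hence time-ergodicity, of $\mu^{\alpha,\rho}$ among invariant measures of the disordered dynamics, which is a nontrivial fact needing a reference or proof; and (b) some device for applying the ergodic theorem at the $N$-dependent site $\lfloor Nx\rfloor$ rather than at a fixed site (this can be repaired by comparing the currents across $\lfloor Nx\rfloor$ and across $0$ with \eqref{difference_currents} together with stationarity of the marginal at time $Nt$, but you should say so). The supercritical upper bound via Corollary \ref{corollary_consequence}, Proposition \ref{current_source} and a shift of the observation point to a nearby $(c+\varepsilon)$-slow site is exactly the paper's Step one.

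The genuine gap is the supercritical lower bound, which you correctly single out as the heart of the matter but then leave as a heuristic (``the condensed mass keeps the slow sites saturated''). Moreover your diagnosis that neither the equilibrium reduction nor monotone coupling applies there is wrong, and this is precisely what the paper exploits. Its argument is: first replace $\eta_0$ by the pseudo-equilibrium $\eta_0^{\alpha,\delta}=\xi_0^{\alpha,\rho_c-\delta}+\zeta_0^{\alpha,\rho-\rho_c+\delta}$ of \eqref{complete_inversion_eps} --- legitimate because both configurations have the \emph{same} macroscopic density $\rho$, so Lemma \ref{lemma_current} applies in both directions and there is no density gap. By attractiveness, $\eta^{\alpha,\delta}_s\geq\xi^{\alpha,\rho_c-\delta}_s$ for all $s$, where the latter is a genuine subcritical stationary process. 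Measuring the current across $z_N=\lfloor Nx\rfloor+a_\varepsilon(\tau_{\lfloor Nx\rfloor}\alpha)-1$, i.e.\ immediately to the left of a site with $\alpha(1+z_N)\leq c+\varepsilon$, and using \eqref{standard_current}, the rightward rate is bounded below by $p\,\overline{R}^{\,\,-1}(\rho_c-\delta)$ (stationarity of $\mu^{\alpha,\rho_c-\delta}$ and monotonicity of $g$), while the leftward rate is bounded above by $q(c+\varepsilon)$ because $g\leq 1$. Letting $N\to+\infty$ and then $\delta,\varepsilon\to 0$ yields $\liminf N^{-1}\Gamma\geq t(p-q)c$; Lemma \ref{lemma_ergodic}, which is where Assumption \ref{assumption_dense} enters, guarantees $a_\varepsilon(\tau_{\lfloor Nx\rfloor}\alpha)=o(N)$ so that the shift of observation point costs only $o(N)$ of intervening mass. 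Without some such quantitative mechanism your lower bound does not close.
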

Before proving Proposition \ref{current_uniform}, we use it to derive  Proposition
\ref{lemma_inv_meas_2}.\\
\begin{proof}{Proposition}{lemma_inv_meas_2}
\mbox{}\\ \\
{\em Proof of \eqref{ergo_density_2}.} 
 Let $a,b\in\R$ such that $a<b$, and  $\rho\geq 0$. 
Since  by \eqref{difference_currents} we have 
\[
\Gamma^\alpha_{\lfloor at\rfloor-1}(t,\xi^{\alpha,\rho}_0)
-\Gamma^\alpha_{\lfloor bt\rfloor}(t,\xi^{\alpha,\rho}_0)
=\sum_{x=\lfloor at\rfloor}^{\lfloor bt\rfloor}\xi^{\alpha,\rho}_t(x)
-\sum_{x=\lfloor at\rfloor}^{\lfloor bt\rfloor}\xi^{\alpha,\rho}_0(x),
\]
applying \eqref{limit_ergo_xi} and Proposition \ref{current_uniform} gives 
\be\label{simple_conv}
\lim_{t\to+\infty}\left|\frac{1}{(b-a)t}
\sum_{\lfloor at\rfloor}^{\lfloor bt\rfloor}\xi_t^{\alpha,\rho}(x)  - \rho\right|=0
\ee
in probability. 
The stronger uniform result is obtained using attractiveness 
and a discretization of  densities and positions.
Indeed, for $n\in\N\setminus\{0\}$ and $k=0,\ldots,n$, 
let 
\be\label{valdisc}
r_k^n:=\frac{k}{n}\rho_0, \qquad x_k^n:= A+\frac{k}{n}(B-A)
\ee 
\begin{eqnarray*}
S(t) & := &
\sup_{\scriptstyle A<a<b<B\atop\scriptstyle b-a>\varepsilon,\,\rho\leq\rho_0}
\left|
\frac{1}{(b-a)t}
\sum_{\lfloor at\rfloor}^{\lfloor bt\rfloor}\xi_t^{\alpha,\rho}(x)  - \rho\right|
\\
S^n(t) & := &
\max_{\scriptstyle k,l,m=0,\ldots,n\atop\scriptstyle m-l>n\varepsilon}
\left|\frac{1}{(x^n_m-x^n_l)t}
\sum_{x=\lfloor x^n_l t\rfloor}^{\lfloor x^n_m t\rfloor}\xi_t^{\alpha,r^n_k}(x)  - r^n_k\right|
\end{eqnarray*}
By \eqref{simple_conv}, for each $n\in\N\setminus\{0\}$, $S^n(t)$ 
converges to $0$ in probability as $t\to+\infty$. 
On the other hand, $S(t)\leq S^n(t)+E^n(t)$, where $E^n(t)$ 
is an upper bound for the discretization error. Using \eqref{ordered_stat_later}, 
we can take as upper bound
\begin{eqnarray}\nonumber
E^n(t) & := & \max_{\scriptstyle k,l,m=0,\ldots,n\atop\scriptstyle m-l>n\varepsilon}
\frac{1}{(x^n_m-x^n_l)t}\sum_{x=\lfloor x^n_l t\rfloor}^{\lfloor x^n_m t\rfloor}\left[
\xi^{\alpha,r^n_{k+1}}_t(x)-\xi^{\alpha,r^n_{k}}_t(x)
\right]
+\frac{\rho_0}{n}\\
& + & \frac{2}{n\varepsilon}\max_{k=0,\ldots,n}\max_{l=0,\ldots,n-1}
\frac{1}{(x^n_{l+1}-x^n_l)t}
\sum_{x=\lfloor x^n_l t\rfloor}^{\lfloor x^n_{l+1} t\rfloor}\xi^{\alpha,r^n_k}_t(x)
\label{discretization_bound}
\end{eqnarray}
It follows from \eqref{simple_conv} that 
\[
\lim_{t\to+\infty}E^n(t)  \le  2\frac{\rho_0}{n}+2\frac{\rho_0}{n\varepsilon}
\]
in probability. Since $n$ can be taken arbitrarily large, 
 \eqref{ergo_density_2} follows.  \\ \\
{\em Proof of \eqref{ergo_flux_2}.} 
We use the same discretization  \eqref{valdisc}  as previously, now setting
\begin{eqnarray*}
T(t) & := & \sup_{\scriptstyle A<a<B\atop\scriptstyle \rho\leq\rho_0}\left|
\frac{1}{t}\Gamma^\alpha_{\lfloor Na\rfloor}(t,\xi_0^{\alpha,\rho})-f(\rho)
\right|\\ 
T^n(t) & := &\max_{k,l=0,\ldots,n}\left|
\frac{1}{t}\Gamma^\alpha_{\lfloor tx^n_l\rfloor}(t,\xi_0^{\alpha,r^n_k})-f(r^n_k)
\right|
\end{eqnarray*}
By Proposition \ref{current_uniform},  for each $n\in\N\setminus\{0\}$, $T^n(t)$ 
converges to $0$ in probability as $t\to+\infty$. 
We again write $T(t)\leq T^n(t)+F^n(t)$, but now the discretization bound $F^n(t)$ 
is controlled using Lemma \ref{lemma_current} and \eqref{difference_currents}, which yields
\begin{eqnarray}
\nonumber F^n(t) & := & 
\max_{k=0,\ldots,n,\,l=0,\ldots,n-1}t^{-1}\sum_{x=\lfloor tx^n_l\rfloor}^{\lfloor tx^n_{l+1}\rfloor}\xi^{\alpha,r^n_k}_t(x)\\
\label{discretization_bound_2}
& + & \max_{k=0,\ldots,n-1}t^{-1}\sum_{x=\lfloor (A-V)t\rfloor}^{\lfloor (B+V)t\rfloor}\left[
\xi^{\alpha,r^n_{k+1}}_t(x)-\xi_{ 0}^{\alpha,r^n_{k}}(x)
\right]
\end{eqnarray}
It follows from \eqref{simple_conv} that 
\[
\lim_{t\to+\infty}F^n(t) \le \frac{\rho_0}{n}+(B-A+2V)\frac{\rho_0}{n}
\]
in probability, so we may conclude as previously.
\end{proof}
\mbox{}\\ \\
The sequel of this subsection is devoted to the proof of Proposition \ref{current_uniform}.
In the case $\rho<\rho_c$, Proposition \ref{current_uniform} was proven in \cite[Lemma 4.10]{bmrs2}
for subcritical equilibria, that is $\eta_0=\xi_0^{\alpha,\rho}$ defined by \eqref{def_inversion}.
Indeed, \cite[Lemma 4.10]{bmrs2} was valid only for $x\le 0$
because in \cite{bmrs2} only the second limit in \eqref{assumption_uniform}  was assumed;
since \eqref{assumption_uniform}  gives two limits, the proof of
\cite[Lemma 4.10]{bmrs2} is also valid for $x>0$. 
The following lemma shows that it implies the same for any $\eta_0$ satisfying 
\eqref{assumption_uniform} with $\rho<\rho_c$. 
\begin{lemma}\label{lemma_observe}
Assume \eqref{limit_current} holds for {\em some} $\eta_0\in{\mathbf{X}}$ satisfying \eqref{assumption_uniform}.
Then it holds for {\em any} $\eta'_0\in{\mathbf{X}}$ satisfying \eqref{assumption_uniform}.
\end{lemma}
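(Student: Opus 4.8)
The plan is to transfer the current asymptotics from $\eta_0$ to $\eta'_0$ by a direct current comparison, bounding the difference of the two currents via Lemma~\ref{lemma_current} by a quantity that measures the discrepancy between the integrated mass profiles of $\eta_0$ and $\eta'_0$. Since both configurations satisfy \eqref{assumption_uniform} with the \emph{same} density $\rho$, their integrated profiles differ only by a sublinear amount, so this discrepancy is negligible after dividing by $N$. First I would fix $x\in\R$, $t>0$, set $x_0:=\lfloor Nx\rfloor$, and take the constant path $x_\cdot\equiv x_0$ on the time interval $[0,Nt]$, for which $x^M_{Nt}=x^m_{Nt}=x_0$ in the notation of Lemma~\ref{lemma_current}.

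Applying Lemma~\ref{lemma_current} with $\zeta_0=\eta'_0$, $\zeta'_0=\eta_0$ and any fixed $V>1$ gives, off an event of probability at most $Ce^{-Nt/C}$,
\[
\Gamma^\alpha_{x_0}(Nt,\eta'_0)-\Gamma^\alpha_{x_0}(Nt,\eta_0)\;\geq\;-\Big(0\vee\sup_{y\in\mathcal I_N}\big[F_{x_0}(y,\eta'_0)-F_{x_0}(y,\eta_0)\big]\Big),
\]
where $\mathcal I_N:=[x_0-VNt,\,x_0+1+VNt]\cap\Z$. Exchanging the roles of $\eta_0$ and $\eta'_0$ (both lie in $\mathbf X$, so the lemma applies either way) yields the reverse lower bound, and intersecting the two exceptional events I obtain
\[
\big|\Gamma^\alpha_{x_0}(Nt,\eta'_0)-\Gamma^\alpha_{x_0}(Nt,\eta_0)\big|\;\leq\;\sup_{y\in\mathcal I_N}\big|F_{x_0}(y,\eta'_0)-F_{x_0}(y,\eta_0)\big|
\]
off an event of probability at most $2Ce^{-Nt/C}$.

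It then remains to prove that $N^{-1}\sup_{y\in\mathcal I_N}|F_{x_0}(y,\eta'_0)-F_{x_0}(y,\eta_0)|\to0$ in probability; combined with the assumed limit \eqref{limit_current} for $\eta_0$, this gives \eqref{limit_current} for $\eta'_0$. Writing the difference as $[F_{x_0}(y,\eta'_0)-\rho(y-x_0)]-[F_{x_0}(y,\eta_0)-\rho(y-x_0)]$, it suffices to show $N^{-1}\sup_{y\in\mathcal I_N}|F_{x_0}(y,\zeta)-\rho(y-x_0)|\to0$ in probability for $\zeta\in\{\eta_0,\eta'_0\}$, which is where the main work lies. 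The key point is that for fixed $\zeta\ge0$ the map $y\mapsto F_{x_0}(y,\zeta)$ is nondecreasing, so I would discretize $\mathcal I_N$ into $m$ subintervals with endpoints $y_j=x_0+\lfloor jVNt/m\rfloor$ (and symmetrically to the left), control $F_{x_0}(y_j,\zeta)-\rho(y_j-x_0)$ at the finitely many endpoints, and interpolate by monotonicity, the interpolation error being of order $N/m$. At each endpoint $y_j$, decomposing $F_{x_0}(y_j,\zeta)$ into partial sums based at the origin and applying \eqref{assumption_uniform} — using both the $+\infty$ and $-\infty$ Cesàro limits, since $\mathcal I_N$ straddles the origin once $Vt>|x|$ — gives $F_{x_0}(y_j,\zeta)=\rho(y_j-x_0)+o(N)$ in probability. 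The main obstacle is precisely making this estimate \emph{uniform} over the order-$N$ sites of $\mathcal I_N$ from the merely Cesàro, and only in-probability, hypothesis \eqref{assumption_uniform}; the monotonicity of the partial sums, guaranteed by $\zeta\ge0$, is what upgrades pointwise convergence at a fixed finite grid to uniform control, the residual $\rho VNt/m$ being made negligible by letting $m\to\infty$ after $N\to\infty$. Letting $m\to\infty$ then yields the claim and hence the lemma.
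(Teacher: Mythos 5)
Your proposal is correct and follows essentially the same route as the paper, whose proof consists of the single observation that Lemma \ref{lemma_current} applied to two configurations satisfying \eqref{assumption_uniform} with the same $\rho$ forces $N^{-1}\left|\Gamma^\alpha_{\lfloor Nx\rfloor}(Nt,\eta_0)-\Gamma^\alpha_{\lfloor Nx\rfloor}(Nt,\eta'_0)\right|\to 0$ in probability. You merely supply the details the paper leaves implicit (the two-sided application of the lemma and the monotonicity/discretization argument upgrading the Ces\`aro hypothesis to uniform control of $F_{x_0}$ over the order-$N$ interval), and these details are sound.
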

\begin{proof}{Lemma}{lemma_observe}
Since both $\eta_0$ and $\eta'_0$ satisfy \eqref{assumption_uniform}, 
Lemma \ref{lemma_current} implies,  for any $x\in\R, t>0$,  the limit in probability
\be\label{implies_aslimit}
\lim_{N\to+\infty}
\left|
N^{-1}\Gamma^\alpha_{\lfloor Nx\rfloor}\left(Nt,\eta_0\right)
-N^{-1}\Gamma^\alpha_{\lfloor Nx\rfloor}\left(Nt,\eta'_0\right)
\right|
=0
\ee 
\end{proof}
\mbox{}\\
To complete  the proof of Proposition  \ref{current_uniform}, 
we now treat the case $\rho\geq\rho_c$ as follows.  Recall \eqref{extension_flux}.  
\begin{proposition}\label{current_super}
Under assumption \eqref{assumption_uniform} for $\rho\geq\rho_c$, 
\begin{eqnarray}
\lim_{N\to+\infty}N^{-1}\Gamma^\alpha_{\lfloor Nx\rfloor}(Nt,\eta_0) 
& = &  t f(\rho) =  t(p-q)c\label{limit_current_super}
\end{eqnarray}
\end{proposition}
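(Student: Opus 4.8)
The plan is to combine the reduction of Lemma~\ref{lemma_observe} with the two extremal facts recalled above — that the source configuration maximizes the current (Corollary~\ref{corollary_consequence}), and that the source current converges to $(p-q)c$ at slow sites (Proposition~\ref{current_source}) — to obtain the upper bound, and to obtain the matching lower bound from an attractive coupling with subcritical pseudo-equilibria. By Lemma~\ref{lemma_observe} the limit depends only on $\rho$, so I may freely replace $\eta_0$ by the pseudo-equilibrium $\xi_0^{\alpha,\rho}$ of \eqref{def_inversion_0}. Writing $T=Nt$ and $x_T=\lfloor Nx\rfloor$, so that $T^{-1}x_T\to x/t=:w$, the claim \eqref{limit_current_super} is equivalent to $T^{-1}\Gamma^\alpha_{x_T}(T,\eta_0)\to(p-q)c$ in probability.

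\emph{Upper bound.} First I would locate, for each large $T$, a slow site $\widetilde{x}_T\le x_T$ with $\alpha(\widetilde{x}_T)\to c$ and $x_T-\widetilde{x}_T=o(T)$. When $w\neq 0$ this is exactly what Assumption~\ref{assumption_dense} provides: condition \eqref{assumption_afgl_0} forces the gaps between consecutive near-$c$ sites to be negligible on the macroscopic scale, so one lies within $o(T)$ of $wT$; when $w=0$ one instead picks near-$c$ sites tending to $-\infty$ slowly enough to remain $o(T)$. Applying \eqref{difference_currents} to the two constant paths $\widetilde{x}_T\le x_T$ and discarding the nonpositive term $-\sum_{\widetilde{x}_T<x\le x_T}\eta_T(x)$, then using Corollary~\ref{corollary_consequence} with $\eta^{*,\widetilde{x}_T}$ from \eqref{conf-max}, gives
\[
\Gamma^\alpha_{x_T}(T,\eta_0)\le \Gamma^\alpha_{\widetilde{x}_T}(T,\eta_0)+\sum_{x=\widetilde{x}_T+1}^{x_T}\eta_0(x)\le \Gamma^\alpha_{\widetilde{x}_T}(T,\eta^{*,\widetilde{x}_T})+\sum_{x=\widetilde{x}_T+1}^{x_T}\eta_0(x).
\]
Dividing by $T$, the density hypothesis \eqref{assumption_uniform} makes the finite sum $o(T)$ (it is a difference of two cumulative sums near $wT$, each of order $\rho\,\widetilde{x}_T$), while Proposition~\ref{current_source} sends $T^{-1}\Gamma^\alpha_{\widetilde{x}_T}(T,\eta^{*,\widetilde{x}_T})$ to $(p-q)c$ in $L^1$, the correction $p[\alpha(\widetilde{x}_T)-c]$ vanishing since $\widetilde{x}_T$ is a slow site. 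Hence $\limsup_T T^{-1}\Gamma^\alpha_{x_T}(T,\eta_0)\le(p-q)c$ in probability.

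\emph{Lower bound.} Here I would take $\eta_0=\xi_0^{\alpha,\rho}$ and couple it, through the Harris system, to the subcritical pseudo-equilibrium $\xi_0^{\alpha,\rho'}$ with $\rho'<\rho_c$, so that $\xi^{\alpha,\rho}_\cdot\ge\xi^{\alpha,\rho'}_\cdot$ by \eqref{ordered_stat_later}. The subcritical case of Proposition~\ref{current_uniform} gives $T^{-1}\Gamma^\alpha_{x_T}(T,\xi_0^{\alpha,\rho'})\to f(\rho')$, and $f(\rho')=(p-q)(\overline{R}^{\alpha})^{-1}(\rho')\uparrow(p-q)c$ as $\rho'\uparrow\rho_c$ (recall \eqref{bysetting}). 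Under the basic coupling, each rightward crossing of $x_T$ by the smaller process is accompanied by one of the larger, so
\[
\Gamma^\alpha_{x_T}(T,\xi^{\alpha,\rho}_0)-\Gamma^\alpha_{x_T}(T,\xi^{\alpha,\rho'}_0)=R^+_T-R^-_T,
\]
where $R^\pm_T\ge 0$ count the \emph{extra} rightward, resp. leftward, crossings of the edge at $x_T$ (events of type \textit{(J2)}). When $p=1$, i.e. $q=0$, there are no leftward jumps, $R^-_T=0$, and the lower bound follows immediately upon letting $\rho'\uparrow\rho_c$.

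\emph{The main obstacle} is the case $q>0$, where $R^-_T$ — the leftward flux of the discrepancy mass $\xi^{\alpha,\rho}-\xi^{\alpha,\rho'}$ across $x_T$ — is a priori of order $T$, so a direct comparison (e.g. via Lemma~\ref{lemma_current} with these density-mismatched configurations) loses an $O(T)$ term and is useless. What must be shown is that $R^+_T-R^-_T\ge -o(T)$, i.e. that the net current of discrepancies across a fixed site is asymptotically nonnegative. This is exactly where the strict asymmetry $p>q$ enters: the excess mass behaves as a cloud of second-class particles with nonnegative drift and cannot sustain a net leftward flux across $x_T$. I would render this quantitative by using the rate identity \eqref{standard_current} for the two coupled processes together with $g\le 1$ and $p>q$, controlling the backward discrepancy current through the slow site $\widetilde{x}_T$ selected above and invoking finite propagation (Lemma~\ref{lemma_finite_prop}) to localize. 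Sending $\rho'\uparrow\rho_c$ then yields $\liminf_T T^{-1}\Gamma^\alpha_{x_T}(T,\eta_0)\ge(p-q)c$, which with the upper bound proves \eqref{limit_current_super}.
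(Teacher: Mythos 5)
Your upper bound is essentially the paper's argument: locate a slow site $y_N=\lfloor Nx\rfloor+A_\varepsilon(\tau_{\lfloor Nx\rfloor}\alpha)$ at distance $o(N)$ to the left (Lemma \ref{lemma_ergodic}), shift the observation point there via \eqref{difference_currents}, dominate by the source configuration $\eta^{*,y_N}$ (Corollary \ref{corollary_consequence}) and apply Proposition \ref{current_source}. That part is correct.

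The lower bound, however, has a genuine gap precisely at the step you yourself flag as ``the main obstacle''. Your identity $\Gamma^\alpha_{x_T}(T,\xi^{\alpha,\rho}_0)-\Gamma^\alpha_{x_T}(T,\xi^{\alpha,\rho'}_0)=R^+_T-R^-_T$ is fine, but the claim that the discrepancy cloud ``has nonnegative drift and cannot sustain a net leftward flux across $x_T$'' is neither proved nor true bond by bond: by \eqref{standard_current} applied to the coupled pair, the expected net discrepancy current across the bond $(x_T,x_T+1)$ is $\int_0^T\Exp\left\{p\alpha(x_T)[g(\zeta_s(x_T))-g(\eta_s(x_T))]-q\alpha(x_T+1)[g(\zeta_s(x_T+1))-g(\eta_s(x_T+1))]\right\}ds$ with $\zeta\ge\eta$, and in the condensation regime the excess mass accumulates at slow sites, so nothing a priori prevents the second term from dominating over a time interval of length $O(T)$ if a reservoir of discrepancies sits just to the right of $x_T$. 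Invoking ``$g\le1$ and $p>q$'' together with finite propagation does not produce a quantitative bound on $R^-_T$. The paper's proof avoids the discrepancy current altogether: it moves the observation point to $z_N=\lfloor Nx\rfloor+a_\varepsilon(\tau_{\lfloor Nx\rfloor}\alpha)-1$, i.e.\ to the bond whose \emph{right} endpoint $1+z_N$ is a slow site (still at distance $o(N)$ by Lemma \ref{lemma_ergodic}), replaces $\eta_0$ by the pseudo-equilibrium \eqref{complete_inversion_eps} built over the genuine stationary measure at density $\rho_c-\delta$, and applies \eqref{standard_current} to the \emph{single} supercritical process: the outgoing term is bounded below by $p\,\overline{R}^{\,-1}(\rho_c-\delta)$ because $\eta^{\alpha,\delta}_s\ge\xi^{\alpha,\rho_c-\delta}_s$ and the latter is stationary, while the incoming term is bounded above by $q(c+\varepsilon)$ using only $g\le1$ and $\alpha(1+z_N)\le c+\varepsilon$. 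The slowness of the right endpoint of the observation bond is what kills the backward current uniformly in the configuration; this is the idea missing from your plan (your slow site $\widetilde{x}_T$ sits on the wrong side and serves only the upper bound).
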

For the proof of Proposition \ref{current_super}, 
we need to define the following quantities, 
for $\kappa\in{\bf A}$  an arbitrary environment and $\varepsilon>0$. 
\begin{eqnarray}\label{def:ell}
A_\varepsilon(\gendis)&:=&\sup\{x\le 0:\gendis(x) \leq c+\varepsilon\}\in \Z^-\cup\{-\infty\}\,,\\
a_\varepsilon(\gendis)&:=&\inf\{x\ge 0:\gendis(x) \leq c+\varepsilon\}\in\overline{\N}\,.\label{def:err}
\end{eqnarray} 
with the usual conventions $\inf\emptyset=+\infty$ and $\sup\emptyset=-\infty$.
 It follows from the above definitions that 
\begin{eqnarray}\label{limit_a_epsilon}
&&\lim_{\varepsilon\to 0}A_\varepsilon(\gendis)=-\infty,\quad
\lim_{\varepsilon\to 0}a_\varepsilon(\gendis)=+\infty \\
\label{finite_a_min}
&&\liminf_{x\to-\infty}\gendis(x)=c\Rightarrow\forall \varepsilon>0,\,A_\varepsilon(\gendis)>-\infty\\
\label{finite_a_plus}
&&\liminf_{x\to+\infty}\gendis(x)=c\Rightarrow\forall \varepsilon>0,\,a_\varepsilon(\gendis)<+\infty
\end{eqnarray}
Coming back to the setting of Proposition \ref{current_super}, 
\eqref{finite_a_min}--\eqref{finite_a_plus} and 
\eqref{not_too_sparse} iùmply that $A_\varepsilon(\alpha)$ 
and $a_\varepsilon(\alpha)$ are finite. Besides, 
a consequence of  Assumption \ref{assumption_dense}  is the following: 
\begin{lemma}\label{lemma_ergodic} 
For every $\varepsilon>0$,
\be\label{limit_ergodic}
\lim_{n\to\pm\infty}n^{-1}a_\varepsilon(\tau_n\alpha)=0,\quad
\lim_{n\to\pm\infty}n^{-1}A_\varepsilon(\tau_n\alpha)=0
\ee
\end{lemma}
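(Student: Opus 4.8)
The plan is to read the quantities geometrically. Writing $(\tau_n\alpha)(x)=\alpha(n+x)$, the quantity $a_\varepsilon(\tau_n\alpha)$ is the distance from the site $n$ to the nearest site $y\ge n$ with $\alpha(y)\le c+\varepsilon$, while $-A_\varepsilon(\tau_n\alpha)\ge 0$ is the distance from $n$ to the nearest such site $y\le n$. Thus \eqref{limit_ergodic} asserts exactly that, as one moves off to $\pm\infty$, the nearest $\varepsilon$-slow site lies at sublinear distance; the whole content of the lemma is the translation of the ``ratio tends to $1$'' condition \eqref{assumption_afgl_0} into ``gaps between slow sites are sublinear''. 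I would extract the relevant slow sites from the defect sequence $(x_k)_{k\in\Z}$ furnished by Assumption \ref{assumption_dense}. Fixing $\varepsilon>0$, the first limit in \eqref{assumption_afgl} yields an integer $N$ with $\alpha(x_k)\le c+\varepsilon$ whenever $|k|\ge N$, so every such $x_k$ is an admissible site in the definitions \eqref{def:ell}--\eqref{def:err}.

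First I would treat $a_\varepsilon$ as $n\to+\infty$. For $n$ large enough that $n>x_N$, let $k(n)$ be the smallest index with $x_{k(n)}\ge n$, so that $x_{k(n)-1}<n\le x_{k(n)}$ and $k(n)\ge N$. Since $x_{k(n)}$ is $\varepsilon$-slow, $0\le a_\varepsilon(\tau_n\alpha)\le x_{k(n)}-n\le x_{k(n)}-x_{k(n)-1}$, and because $0<x_{k(n)-1}<n$ for large $n$,
\[
0\le \frac{a_\varepsilon(\tau_n\alpha)}{n}\le\frac{x_{k(n)}-x_{k(n)-1}}{x_{k(n)-1}}=\frac{x_{k(n)}}{x_{k(n)-1}}-1 .
\]
As $n\to+\infty$ we have $k(n)\to+\infty$, so the right-hand side tends to $0$ by \eqref{assumption_afgl_0} (which forces $x_{k+1}/x_k\to1$). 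This gives the first half of \eqref{limit_ergodic}.

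The remaining three cases are entirely analogous and I would only record the sign bookkeeping. For $A_\varepsilon$ as $n\to+\infty$ I sandwich $x_{k(n)}\le n<x_{k(n)+1}$ and bound $-A_\varepsilon(\tau_n\alpha)\le n-x_{k(n)}\le x_{k(n)+1}-x_{k(n)}$, dividing by $n\ge x_{k(n)}>0$ to obtain $x_{k(n)+1}/x_{k(n)}-1\to0$. For the limits $n\to-\infty$ the sites $x_k$ and $n$ are eventually negative and $k(n)\to-\infty$; after taking absolute values one has $|n|\ge|x_{k(n)}|$, and the gap-to-position ratio becomes $x_{k(n)-1}/x_{k(n)}-1$ (a quotient of negative numbers, hence positive) for $a_\varepsilon$, again controlled by \eqref{assumption_afgl_0}. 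The one point demanding care — and thus the sole obstacle in an otherwise routine argument — is that, depending on the direction, the normalization must be taken against whichever of the two endpoints $x_{k(n)},x_{k(n)\pm1}$ of the enclosing gap has the \emph{smaller} modulus, so that the bound always reduces to an expression of the form ``ratio $-1$'' to which \eqref{assumption_afgl_0} applies. Once this is arranged, all four limits in \eqref{limit_ergodic} follow in the same way.
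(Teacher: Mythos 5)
Your argument is correct and is essentially the paper's own proof: both sandwich $n$ between consecutive defects $x_{k(n)-1}<n\le x_{k(n)}$, use \eqref{assumption_afgl} to make $x_{k(n)}$ an admissible $\varepsilon$-slow site for large $n$, and bound $n^{-1}a_\varepsilon(\tau_n\alpha)$ by the relative gap $x_{k(n)}/x_{k(n)-1}-1$, which vanishes by \eqref{assumption_afgl_0}. The paper treats only the first limit and leaves the other three cases implicit, whereas you spell out the sign bookkeeping; that is a harmless (indeed slightly more complete) presentation of the same proof.
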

\begin{proof}{Lemma}{lemma_ergodic}
Consider for instance the first limit.  
For any $n\in\Z$, there exists a unique $k(n)\in\Z$ such that
$x_{k(n)-1}<n\leq x_{k(n)}$, where $(x_k)_{k\in\Z}$ is the sequence 
in Assumption \ref{assumption_dense}.  Since $\lim_{n\to+\infty}k(n)=+\infty$, 
by  \eqref{assumption_afgl}, 
for $n$ large enough, we have  $\alpha\left(x_{k(n)}\right)<c+\varepsilon$.  Hence 
\[n^{-1}a_\varepsilon\left(\tau_n\alpha\right)\leq n^{-1}\left[ x_{k(n)}-n\right]
\leq n^{-1}\left[ x_{k(n)}-x_{k(n)-1}\right]\]
which vanishes as $n\to+\infty$ by  \eqref{assumption_afgl_0}. 
\end{proof}\mbox{}\\ 
\begin{proof}{Proposition}{current_super}
To derive \eqref{limit_current_super}, 
we establish first an upper bound, then a lower bound, that is
\begin{eqnarray}
\lim_{N\to\infty}\Exp\left[
N^{-1}\Gamma^\alpha_{\lfloor Nx\rfloor}\left(Nt,\eta_0\right)-t(p-q)c
\right]^+ & = & 0\label{current_upperbound} \\
\label{current_lowerbound}
\liminf_{N\to+\infty}\Exp\left[
N^{-1}\Gamma^\alpha_{\lfloor Nx\rfloor}\left(Nt,\eta_0\right)
\right] & \geq & t(p-q)c
\end{eqnarray}
{\em Step one: 
upper bound  \eqref{current_upperbound}.} 
Let $y_N:=\lfloor Nx\rfloor + A_\varepsilon(\tau_{\lfloor Nx\rfloor}\alpha)$. 
By \eqref{difference_currents},
\be\label{compare_current_A}
N^{-1}\Gamma_{\lfloor Nx\rfloor }^\alpha(Nt,\eta_0)\leq 
N^{-1}\Gamma_{y_N}^\alpha(Nt,\eta_0)+
N^{-1}\sum_{x=y_N+1}^{\lfloor Nx\rfloor}\eta_0(x)
\ee
By Corollary \ref{corollary_consequence},  
\[
\Gamma^\alpha_{y_N}\left(Nt,\eta_0\right)\leq
\Gamma^\alpha_{y_N}\left(Nt,\eta^{*,y_N}\right)
\]
Applying Proposition \ref{current_source} to the right-hand side, and using the fact that 
$\alpha(y_N)\leq c+\varepsilon$ (by definition \eqref{def:ell} of  $A_\varepsilon(.))$,  we obtain
\be\label{current_upperbound_weobtain}
\limsup_{N\to+\infty}\Exp\left\{
\left[
N^{-1}\Gamma^\alpha_{y_N}\left(Nt,\eta_0\right)-t(p-q)c
\right]^+
\right\}\leq\varepsilon t
\ee
By assumption \eqref{assumption_uniform} and Lemma \ref{lemma_ergodic}, 
the second term on the r.h.s. of \eqref{compare_current_A} vanishes as $N\to+\infty$. 
The upper bound \eqref{current_upperbound} then follows from 
\eqref{current_upperbound_weobtain} and \eqref{compare_current_A} by letting $\varepsilon\to 0$.\\ \\
{\em Step two: lower bound \eqref{current_lowerbound}.}
Let $\delta>0$,  and set $\eta_0^{\alpha,\delta}:=\xi_0^{\alpha,\rho}$ defined by \eqref{complete_inversion_eps}.
We are going to prove that
\be\label{super_current_2}
\liminf_{\delta\to 0}\liminf_{N\to\infty}\Exp\left\{
N^{-1}\Gamma^\alpha_{\lfloor Nx\rfloor}\left(Nt,\eta_0^{\alpha,\delta}\right)
\right\}
\geq t(p-q)c
\ee
Indeed,  since both $\eta_0$ and $\eta_0^{\alpha,\delta}$ satisfy \eqref{assumption_uniform}, 
Lemma \ref{lemma_current} implies the limit in probability
\be\label{implies_aslimit}
\lim_{N\to+\infty}
\left|
N^{-1}\Gamma^\alpha_{\lfloor Nx\rfloor}\left(Nt,\eta_0^{\alpha,\delta}\right)
-N^{-1}\Gamma^\alpha_{\lfloor Nx\rfloor}\left(Nt,\eta_0\right)
\right|
=0
\ee
Thus  \eqref{super_current_2} implies \eqref{current_lowerbound}.      \\ \\
 \textit{Proof of \eqref{super_current_2}.} 
We use \eqref{difference_currents} to write 
\be\label{for98}
\qquad  N^{-1}\Gamma^\alpha_{\lfloor Nx\rfloor}\left(Nt,\eta_0^{\alpha,\delta}\right)-
N^{-1}\Gamma^\alpha_{ z_N}\left(Nt,\eta_0^{\alpha,\delta}\right)
\geq-N^{-1}\sum_{x=1+\lfloor Nx\rfloor}^{  z_N }\eta_0^{\alpha,\delta}(x)\ee
\be\label{def_y_N}
\mbox{where}\qquad z_N:=\lfloor Nx\rfloor+a_\varepsilon\left(\lfloor Nx\rfloor\right)-1
\ee
By Lemma \ref{lemma_ergodic} and assumption \eqref{assumption_uniform}, 
the r.h.s. of \eqref{for98} vanishes a.s. as $N\to+\infty$. 
Therefore, to establish \eqref{super_current_2}, it is enough to prove that
\be\label{super_current_3}
\liminf_{\varepsilon\to 0}\liminf_{\delta\to 0}\liminf_{N\to\infty}\Exp\left\{
N^{-1}\Gamma^\alpha_{z_N}\left(Nt,\eta_0^{\alpha,\delta}\right)
\right\}
\geq t(p-q)c
\ee
In order to prove the above, we write  (cf. \eqref{standard_current}) 
\begin{eqnarray}
&&\Exp\left\{
N^{-1}\Gamma^\alpha_{z_N}\left(Nt,\eta_0^{\alpha,\delta}\right)
\right\} \nonumber\\
& = &  N^{-1}\int_0^{Nt}\Exp \left\{
p\alpha(z_N)g\left[\eta_s^{\alpha,\delta}(z_N)\right]\right.\nonumber\\
& & \phantom{N^{-1}\int_0^{Nt}\Exp \left\{\right.}
\left.-q\alpha(1+z_N)g\left[(\eta_s^{\alpha,\delta}(1+z_N)\right]
\right\}ds\label{forlower}
\end{eqnarray}
Since by attractiveness we have $\eta_s^{\alpha,\delta}\geq\xi_s^{\alpha,\rho_c-\delta}$, 
in the above integral, we have the lower bound  (cf. \eqref{def_flux}) 
\begin{eqnarray}
\Exp \alpha(z_N)g\left[\eta_s^{\alpha,\delta}(z_N)\right] & \geq & 
\Exp \alpha(z_N)g\left[\xi_s^{\alpha,\rho_c-\delta}(z_N)\right]\nonumber\\
& = & \int_{\mathbf{X}}\alpha(z_N)g(\xi)d\mu^{\alpha,\rho_c-\delta}(\xi)=\overline{R}^{\,\,-1}(\rho_c-\delta)\label{lowerbound_integral}
\end{eqnarray}
On the other hand, by definitions \eqref{def_y_N} of $z_N$, \eqref{def:ell} 
of  $a_\varepsilon(\alpha)$,  and the inequality $g\leq 1$,  we have the upper bound
\be\label{upperbound_integral}
\Exp\left\{\alpha(1+z_N)g\left[(\eta_s^{\alpha,\delta}(1+z_N)\right]\right\}\leq c+\varepsilon
\ee
The above bounds \eqref{lowerbound_integral}--\eqref{upperbound_integral} imply 
that from \eqref{forlower} we get 
\[
\Exp\left\{
N^{-1}\Gamma^\alpha_{z_N}\left(Nt,\eta_0^{\alpha,\delta}\right)
\right\}\geq t[p\,\overline{R}^{\,\,-1}(\rho_c-\delta)-q(c+\varepsilon)]
\]
The limit \eqref{super_current_3} follows, since  $\overline{R}^{\,\,-1}(\rho_c)=c$
 (cf. \eqref{bysetting}). 
\end{proof} 
\mbox{}\\ \\
\appendix
%
\section{Proof of Lemma 2.1}
\label{app:justify}
Consider for instance the left-hand side $M_n$ of \eqref{justify_mean_beta}. By \eqref{mean_density_quenched},
$$
\Exp_{\mu_\beta^\alpha}(M_n)=\frac{1}{n+1}\sum_{x=0}^n R\left[\frac{\beta}{\alpha(x)}\right]\stackrel{n\to+\infty}{\longrightarrow}\overline{R}(\beta)
$$
On the other hand, 
$$
V(\beta):=\sum_{n=0}^{+\infty}n^2\theta_\beta(n)-R(\beta)^2
$$
is continuous on $[0,1)$, and, using Assumption \ref{assumption_ergo}, we have
$$
\mathbb{V}_{\mu_\beta^\alpha}(M_n)=\frac{1}{(n+1)^2}\sum_{x=0}^n V\left[\frac{\beta}{\alpha(x)}\right]\stackrel{n\to+\infty}{\sim}\frac{\overline{V}(\beta)}{n}
$$
where (with the conventions  \eqref{extension_theta}--\eqref{convention_2}) 
$$
\overline{V}(\beta):=\int_{[0,1]}V\left(\frac{\beta}{a}\right)dQ_0(a)=
\int_{[C,1]}V\left(\frac{\beta}{a}\right)dQ_0(a)\in[0,+\infty),\,\,\forall \beta\in[0,c)
$$
The conclusion then follows from Tchebychev's inequality.
%
\section{Proof of Lemma 3.2}
\label{app:uniform}
{\em Proof of (i).}   If $c<C$, by considering test functions $f$ 
supported either around $c$ or around $[C,1]$, we see that 
\eqref{eq:assumption_ergo} holds if and only if the contributions 
of each term on the r.h.s. of \eqref{inversion} to the empirical 
measures converges separately.
Following \eqref{sequence_alpha}, the contribution of the second term 
can only be a pointmass at $c$ and this requires $x_n/n$ to have 
a finite limit. If this pointmass were positive, by 
\eqref{sample_uniform}--\eqref{inversion}, the contribution of the first 
term would also contain a pointmass at $c$ in compensation, because 
$Q_0$ does not have such a pointmass. This is impossible because 
$F_{Q_0}^{-1}(u)$ always lies in the support of $Q_0$, hence in $[C,1]$. 
Conversely,  \eqref{growth_inc_x} implies that the second term of 
\eqref{inversion} does not contribute to the limits in 
\eqref{eq:assumption_ergo}. Therefore, \eqref{eq:assumption_ergo}
is equivalent to  \eqref{growth_inc_x} {\it plus} \eqref{empirical_uniform}.\\ \\
Assume all conditions hold. 
We  verify \eqref{empirical_uniform}. Condition \eqref{growth_inc} implies that
\be\label{subseq_ergo}
\lim_{n\to+\infty}\frac{1}{y_n}\sum_{x=0}^{y_n-1}\delta_{u(x)}=
\lim_{n\to-\infty}\frac{1}{y_n}\sum_{x=y_{n}+1}^{0}\delta_{u(x)}=\mathcal U(0,1)
\ee
Indeed, let $Q^n:=(y_n-y_0)^{-1}\sum_{x=y_0}^{y_n-1}\delta_{u(x)}$. 
For a nondecreasing function $f$ on $[0,1]$, 
\be\label{rectangle}
\int_{[0,1]}f(u)dQ^n(u)=\frac{\sum_{k=0}^{n-1}l_k i(l_k)}{\sum_{k=0}^{n-1}l_k}
\ee
where $l_k:=y_{k+1}-y_k$, and
\[
i(l)=\frac{1}{l}\sum_{i=0}^{l-1}f\left(\frac{i}{l}\right)
\]
is a rectangle approximation of $\int_0^1 f(u)du$ with an error  
bounded  by $||f||_\infty/l$. It follows that  \eqref{rectangle} 
approximates $\int_0^1 f(u)du$ with an error bounded by 
\[
||f||_\infty\frac{\sum_{k=0}^{n-1}l_k \frac{1}{l_k}}{\sum_{k=0}^{n-1}l_k}=||f||_\infty\frac{n}{y_n}
\]
which implies \eqref{subseq_ergo} by \eqref{growth_inc}. 
Condition \eqref{assumption_afgl_1} allows to fill the gap 
between \eqref{subseq_ergo} and \eqref{eq:assumption_ergo}. 
Indeed, for the above test function $f$,  
\[
\frac{1}{y_{p+1}}\sum_{k=0}^{y_p}f[u(k)]\leq\frac{1}{n}\sum_{k=0}^{n-1}f[u(k)]
\leq\frac{1}{y_{p}}\sum_{k=0}^{y_{p+1}}f[u(k)]
\]    
where $p$ is such that $y_p\leq n-1<y_{p+1}$, and \eqref{assumption_afgl_1} 
makes the ratio of the extreme terms tend to $1$ as $p\to\pm\infty$. Considering $f(u)=u$, we have
\[
\int_{[0,1]}f(u)dQ^n(u)=\frac{1}{y_n}\sum_{k=0}^{n-1}\sum_{i=0}^{l _k}\frac{i}{l_k}=\frac{1}{y_n}\sum_{k=0}^{n-1}\frac{l_k+1}{2}
=\frac{y_n+n}{2y_n}
\]
that converges to $\int_0^1udu$ only if \eqref{growth_inc} holds. 
If \eqref{assumption_afgl_1} fails, consider a subsequence of 
values of $y_{n+1}/y_n$ converging to $a\in(1,+\infty]$, then, 
for $f(u)=u$, and $z_n=(y_n+y_{n+1})/2$, we have
\be\label{empirical_z}
\frac{1}{z_n}\sum_{x=y_0}^{z_n}f[u(x)]=\frac{2}{y_n+y_{n+1}}\left(
\frac{y_n+n}{2}+s_n
\right)
\ee
where $s_n\sim l_n/8$ is the contribution of the sum 
between $x=y_n+1$ and $x=z_n$.  Hence, using \eqref{growth_inc}, 
the l.h.s. of \eqref{empirical_z} converges to 
\[
\frac{2}{1+a}\left[
\frac{1}{2}+\frac{a-1}{8}
\right]
=\frac{a+3}{4(a+1)}<1/2
\]
{\em Proof of (ii).} If $c<C$, this is a tautology.
If $c=C$, for $n\in\Z$, by \eqref{sample_uniform}--\eqref{inversion}, 
the sequence $(y_n)_{n\in\Z}$ satisfies $\alpha(y_n)=C=c$ 
and thus satisfies Assumption \ref{assumption_dense} by \eqref{assumption_afgl_1}.
Conversely, let $(t_n)_{n\in\Z}$ be such that $t_{n+1}/t_n\to 1$ 
and $\alpha(t_n)\to C$. The latter limit implies that $(t_n)_{n\in\Z}$
is extracted from a sequence of the form 
\be\label{set_z}z_n=y_n+\varepsilon_n(y_{n+1}-y_n)\ee
where $\varepsilon_n\to 0$ as $n\to\pm\infty$. If $y_{n+1}/y_n$ 
has a subsequence tending to $a\in[1,+\infty]$, 
the corresponding subsequence of $t_{n+1}/t_n$ converges to $a$. Thus $a=1$.\\ \\
\noindent{\bf Acknowledgments.}
We thank Gunter Sch\"utz for pointing us reference \cite{hs},
and for many interesting discussions.
This work was partially supported by laboratoire MAP5,
grants ANR-15-CE40-0020-02 and ANR-14-CE25-0011,
LabEx CARMIN (ANR-10-LABX-59-01), 
Simons Foundation Collaboration grant 281207 awarded to K. Ravishankar.
We thank
Universit\'{e}s Clermont Auvergne and Paris Descartes for hospitality.
This work was partially carried out during C.B.'s 2017-2018
d\'el\'egation CNRS, whose support is acknowledged.
Part of it was done during the authors' stay at the
Institut Henri Poincar\'e (UMS 5208 CNRS-Sorbonne Universit\'e) - Centre Emile Borel for
the trimester ``Stochastic Dynamics Out of Equilibrium'', and during the authors' stay at NYU Shanghai. 
The authors thank these institutions for hospitality and support. 
\end{document}